\documentclass[a4paper,12pt]{article}

\usepackage[utf8x]{inputenc}
\usepackage{amsmath}
\usepackage{amsfonts}
\usepackage{latexsym}
\usepackage{amsthm}
\usepackage{amssymb,amscd}
\usepackage{xargs}
\usepackage{graphicx}
\usepackage{color}
\usepackage{enumitem}
\usepackage{euscript}
\usepackage{psfrag,url, verbatim, eufrak}

\textwidth=145mm

\newtheorem{lemma}{Lemma}[section]

\newtheorem{remark}[lemma]{Remark}

\newtheorem{theorem}[lemma]{Theorem}

\def\om{\omega}

\def\sg{\sigma}
\def\eps{\varepsilon}

\def\phi{\varphi}
\def\A{\mathbb A}
\def\R{\mathbb R}
\def\RR{\mathbb R}
\def\C{\mathbb C}

\def\T{\mathbb T}

\def\Z{\mathbb Z}

\def\ZZ{\mathbb Z}

\def\NN{\mathbb N}

\def\PP{\mathcal P}

\def\CCC{\mathcal C}

\def\DDD{\mathcal D}

\def\NNN{\mathcal N}

\def\cF{\mathcal F}
\def\FF{\mathcal F}

\def\TT{\mathbb T}

\def\I{\mathcal I}
\def\OO{\mathcal O}

\def \SSS{\mathcal S}

\def\D{\mathcal D}

\def\SM{\mathcal {SM}}

\def\HH{\mathcal H}
\def\~{\tilde}

\def\ga{\gamma}

\def\ii{^{-1}}
\def\rr{\rho}

\def\la{\lambda}
\def\Lb{\Lambda}
\def\La{\Lambda}

\def\Ga{\Gamma}

\def\dt{\delta}

\def\wt{\widetilde}
\def\ol{\overline}
\def\wh{\widehat}
\def\kk{\kappa}
\def\eps{\varepsilon}
\def\pa{\partial}

\def\Sr{\mathrm{NR}}
\def\Dr{\mathrm{Res}}

\def\textb{\textcolor{blue}}

\title{A second order expansion of the separatrix map 
for trigonometric perturbations of a priori unstable systems }
\author{
M. Guardia\footnote{Universitat Polit\`ecnica de Catalunya,
mguardia@gmail.com},\ \ \ 
V. Kaloshin\footnote{University of Maryland at College Park,
vadim.kaloshin@gmail.com},\ \ \ 
J. Zhang\footnote{University of Toronto,
jianlu.zhang@utoronto.ca} } 

\begin{document}
\maketitle

\begin{abstract}
In this paper we study a so-called separatrix map introduced 
by Zaslav\-skii-Filonenko \cite{Zaslavskii} and studied by 
Treschev and Piftankin \cite{Treschev98, Treschev02a, 
Pift06, PT}. We derive a second order expansion of this map 
for trigonometric perturbations. In \cite{CastejonK, GuardiaK15}, and  \cite{KaloshinZZ},  
applying the results of the present paper, we describe
a class of nearly integrable deterministic systems with stochastic 
diffusive behavior. 
\end{abstract}

\tableofcontents
\section{Introduction}
The main goal of this paper is to derive a second order 
expansion of a so-called separatrix map for a class of 
nearly integrable systems. In nearly integrable Hamiltonian 
systems with one and a half degree of freedom this map 
was introduced by Zaslavskii and Filonenko in \cite{Zaslavskii}. Shilnikov \cite{Shilnikov}, using a very similar geometric idea, 
studied it in a neighborhood of homoclinic orbits without 
restriction to Hamiltonian structure or closeness to integrability. 
Treschev and Piftankin estimated the error terms in
the traditional version of the separatrix map and studied the
multidimensional situation \cite{Treschev98, Treschev02a, Pift06}.

It is becoming clear that the separatrix map is a powerful 
tool to analyze the dynamics in a neighbourhood of homoclinic 
orbits to a normally hyperbolic invariant manifold and instabilites 
of Hamiltonian systems (see the survey \cite{PT} and the papers 
by Treschev \cite{Treschev04,Treschev12}). For this reason in
\cite{KaloshinZZ}, using results of this paper, we perform 
an indepth analysis of the phenomenon of global instabilities 
in nearly integrable Hamiltonian systems. Usually this 
phenomenon, discovered by Arnold \cite{Arnold64}, 
is called {\it Arnold diffusion}.

The purpose of this paper is to have detailed studies 
of the multidimensional separatrix map, proposed in \cite{Treschev02a}, and to compute higher order expansion of this map with smaller remainder terms.

The main motivation for this work is to study certain 
{\it stochastic diffusive behavior} for nearly integrable 
deterministic systems. Such stochastic behavior was 
conjectured by Chirikov \cite{Chirikov79} in the 1970s.

Recall a basic fact from probability theory, which 
is a non-homogenenous version of Donsker's theorem
(see e.g. \cite{EK}). 
Let $\eta\in \R$, $\sigma(\eta)>0$ and $b(\eta)$ be two 
smooth functions of one variable. 
Fix $\dt>0, \ \eta_0\in \R$  and consider a sequence 
\[
\eta_{n+1}=\eta_n+ \sigma(\eta_n)\dt\, \omega_n+ b(\eta_n)\dt^2, 
\]
where $\omega_n$'s are independent random variables 
taking values $1$ or $-1$ with equal probabilities. 
Then for each $s>0$ and $n_\dt = [s\dt^{-2}]$ as $\dt \to 0$ 
the distribution of $\eta_{n_\dt}$ converges weakly to the distribution 
of the Ito diffusion process starting at $\eta_0$ with the drift $b(\eta)$ 
and the variance $\sigma^2(\eta)$. 

In the framework that we study in this paper, 
$\eta$ is an action variable of some Hamiltonian system.
To study long time evolution of $\eta_t$ we consider 
a separatrix map and study a discrete version 
$\eta_{t_n},\ n>1$. Then the variance $\sigma(\eta)$ 
of the discrete version is given by an associated 
Melnikov function and was computed by Zaslavski
(see (\ref{def:SM:Treshev}) for a more precise claim). 
However, to analyze $\eta_t$ as a diffusion process 
we also need to compute the drift function $b(\eta)$. This is the main purpose
of our paper.

In Appendix \ref{sec:Arnold} we apply our analysis to 
the so-called  {\it generalized Arnold example} and compute 
analogs of the drift $b(\eta)$ and the variance $\sigma(\eta)$. 
In  \cite{KaloshinZZ} we combine  these results with results from
\cite{CastejonK} (see also \cite{GuardiaK15}) and construct a class of
probability measures  
for a class of trigonometric 
perturbations of the generalized Arnold example. It turns 
out that the distributions of a certain action component 
under the associated Hamiltonian flow with respect to 
these measures weakly converge to a stochastic diffusion 
process on the line. 

More precisely, in \cite{KaloshinZZ} for an open class of 
trigonometric perturbations for a time one map of a generalized
Arnold example, which is a $4$-dimensional symplectic map 
$\cF$, we construct a normally hyperbolic invariant lamination 
$\Lb$. Leaves of this lamination are diffeomorphic to 
$2$-dimension cylinders $\A=\T \times \R$. We find 
a coordinate change such that the restriction of $F=\cF|_{\Lb}$ 
to this lamination is a skew-product of the form 
\[
F:\mathbb A \times \{0,1\}^\Z, \qquad 
F(x,\om)=(f_\om(x),\sigma \om),
\] 
where $\om\in \{0,1\}^\Z$, $\sigma$ is the shift, and 
$f_\om(x)$ is an exact area-preserving map of the
$2$-dimensional cylinder $\A$. Loosely speaking, using 
the results of the present paper, we obtain a second order 
expansion of $f_\om$'s in the perturbation parameter. 

In \cite{CastejonK} and \cite{GuardiaK15}
we analyze a class of skew products,
which includes the aforementioned system $F(x,\om)$, 
and prove weak convergence to a stochastic diffusion 
process on the line\footnote{Examples of nearly intergrable 
systems with stochastic diffusive behavior were constructed 
by Marco and Sauzin (see \cite{MarcoS04,Sauzin06}).}.

We emphasize that the previously known results on Arnold 
diffusion for the generalized Arnold example or, more 
generally, for  a priori chaotic systems, or a priori unstable 
systems, or a priori stable systems, establish the existence 
of ``special diffusing'' orbits
(see \cite{Be,BernardKZ11,BolotinT99,BourgainK04,
ChengY04, ChengY09, ChengZ13, Cheng13, 
DelshamsLS00,DelshamsLS06a,DelshamsH09, 
DelshamsLS13,FejozGKR,GelfreichT08,GideaL06,
GuardiaK14,Kaloshin03,KaloshinMV04,KaloshinL08,
KaloshinL08a,KaloshinS12, KaloshinZ12, 
Kaloshin-Levi-Saprykina,MarcoS03, Mather91,
Mather91a,Mather93,Mather96,Mather03,Mather08,
Moeckel:1995,Pift06, Treschev04, Treschev12}). In
\cite{KaloshinZZ}, \cite{CastejonK}, and \cite{GuardiaK15}, using 
the analysis of the present paper, we study deterministic evolution 
of a class of probability measures under the deperministic 
Hamiltonian flow and obtain that the distributions of a certain action component weakly converge to a stochastic 
diffusion process.

\subsection{The separatrix map for the Arnold example}
Since the definition and formulas of the separatrix map are 
rather involved, we start by considering a  simple case: 
{\it the generalized Arnold example.} Later, in Section 
\ref{sec:setup}, we consider more general models. In 
this section we consider Hamiltonians of the form 
$H_0+\eps H_1$ with
\begin{equation}\label{def:ArnoldGeneralized}
 \begin{split}
  H_0(I,p,q)\ \ \ \ &=\ \dfrac{I^2}{2}+\dfrac{p^2}{2}+(\cos q-1), \\
  H_1(I,\varphi,p,q,t)&=\ P(I,p,e^{iq},e^{i\varphi},e^{it}),
 \end{split}
\end{equation}
where the unperturbed $H_0$ defines the product of the rotor and 
the pendulum  and   $P(I, p, e^{iq},e^{i\varphi},e^{it})$ is a real valued 
trigonometric polynomial in $\varphi$ and $t$.  For the classical Arnold 
example $P= (1-\cos q)(\cos\varphi+\cos t)$. 

In this section, we describe and provide ``simple'' formulas for the separatrix 
map for such models. The rigorous definition is provided in Section 
\ref{sec:Treshev} and more accurate formulas for model
\eqref{def:ArnoldGeneralized} are 
derived in Appendix 
\ref{sec:Arnold}.

The Hamiltonian of the pendulum $\frac{p^2}{2}+(\cos q-1)$ has 
a saddle at $(p,q)=(0,0)$ whose stable and unstable 
invariant manifolds coincide forming a figure eight 
(see left picture in Figure \ref{fig:apriori-unstable}). 
In the full phase space, the saddle corresponds to 
the normally hyperbolic invariant manifold  $(p,q)=(0,0)$, 
$I\in[I_-,I_+]$, $\varphi\in\TT$. The union of the  figure eights 
for every value of $I\in[I_-,I_+]$ form the stable and unstable 
manifolds of the normally hyperbolic cylinder. Consider 
a neighborhood of  such set. It can be defined by
\[
 \left|\frac{p^2}{2}+(\cos q-1) \right|<c,\quad I\in[I_-,I_+], \varphi\in\TT
\]
for some small $c>0$. 
\begin{figure}
 \qquad 
\includegraphics[width=14cm]{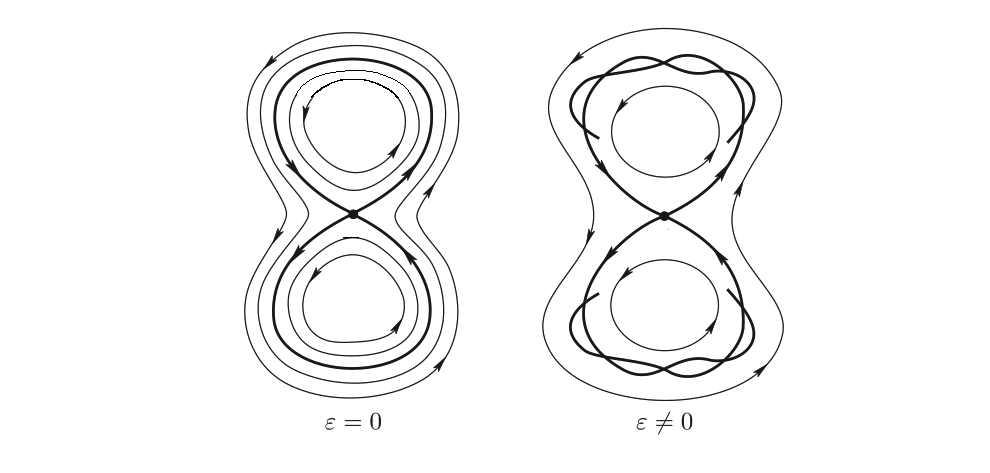}\qquad 
\caption{The figure $8$.}
\label{fig:apriori-unstable}
\end{figure} 
When one adds the perturbation $H_1$, typically, the invariant manifolds of the 
cylinder split. The goal of the separatrix map is to understand the dynamics in 
the neighborhood of the perturbed invariant manifolds, which now intersect 
transversally.

Now we describe the separatrix map. A more  rigorous definition of 
the separatrix map is done in Section \ref{sec:Treshev}. Consider the time 
one map $T_\eps$ associated to the flow of the Hamiltonian $H_\eps$.
We define two fundamental domains, as shown in 
Figure \ref{fig:separatrix-fund-region}, as domains such that 
the image of any point in them under $T_\eps$ does 
not belong to them and some iterate of any point near an unstable manifold
intersects one of these domains.  For any point in the fundamental domains 
one can keep iterating $T_\eps$. If some iterate of such point 
enters again into one of the fundamental domains, we call 
this point the image of the initial point by {\it the separatrix map.} 
Note that some points may not have any of the further iterates 
in the fundamental domains, e.g. those belonging to stable 
invariant manifolds. Then, for these points, we say that 
the separatrix map is not defined. Sometimes, a map of 
this kind is also called {\it an induced map or a return map.}
\begin{figure}
\qquad \qquad 
\includegraphics[width=12.5cm]{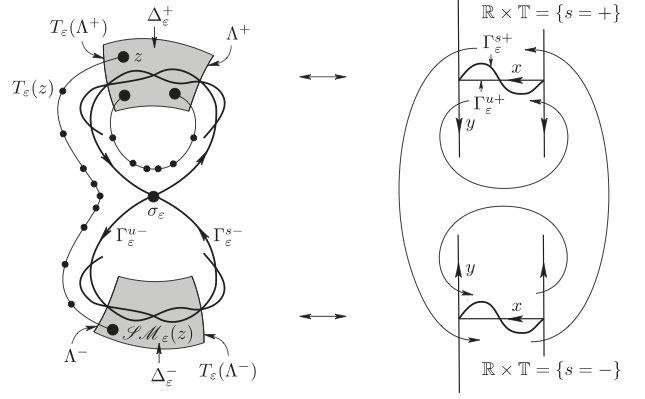}\qquad 
\caption{The fundamental domain}
\label{fig:separatrix-fund-region}
\end{figure} 

To give formulas for the separatrix map, we consider the Fourier expansion of 
the Hamiltonian $H_1(I,\varphi,0,0,t)$,
\[
H_1(I,\varphi,0,0,t)=\sum_{|k|\leq N}H_1^k(I)
e^{2\pi i k\cdot (\varphi,t)},\]
where $N>0$ is the degree of the trigonometric polynomial.

Let $\psi:\RR\longrightarrow [0,1]$  be 
a $\CCC^\infty$ function such that $\psi(r)=0$ for any 
$|r|\geq 1$ and $\psi(r)=1$ for any $|r|\leq 1/2$. We define
\begin{eqnarray*}
\begin{split}
 \ol{\bf H}_1(I,\varphi,t) 
&=\sum_{|k|\leq N}\psi\left(\frac{k\cdot
(I,1)}{\beta}\right) H_1^k(I)e^{2\pi i k\cdot (\varphi,t)}\\
 {\bf H}_1(I,\varphi)&=\ol{\bf H}_1(I,\varphi,0),
 \end{split}
\end{eqnarray*}
for some fixed $\beta>0$ independent of $\eps$ small enough so that for every 
$I\in[I_-,I_+]$ only one harmonic does not vanish (we do not have overlapping 
of resonant zones).

We consider a 
certain system of coordinates $(\eta,\xi,h,\tau,\sigma)$ in the 
fundamental domain (it is explained more precisely in Theorem 
\ref{thm:Treschev}). The last variable $\sigma\in \{-,+\}$ denotes the two 
connected components of the fundamental domain. We define the function
\[
w_0(\eta^*,h^*,\xi^*)=h^*-\frac{(\eta^*)^2}{2}-\eps {\bf 
H}_1(\eta^*,\xi^*).
\]
Then, in  such coordinates, 
the separatrix map is defined for points such that $|w_0|\sim \eps$ and has the 
following (implicit) form 
\begin{equation*}
\begin{aligned}
\eta^*=&\,\eta&&+\eps\pa_\xi\Theta^\sigma(\eta,\xi,\tau) +\eps 
B^{\eta,\sigma} (\eta,\xi, w_0,\tau) +\OO(\eps^{5/3})\\
 \xi^*=&\,\xi &&+\qquad \qquad \qquad \qquad 
 \eta\log\left|\kk^\sigma w_0\right|
+\OO(\eps\log\eps)\\
 h^*=&\,h &&+\eps\pa_\tau\Theta^\sigma(\eta,\xi,\tau) +\OO(\eps^{5/3})\\
 \tau^*=&\,\tau&& + \qquad \qquad \qquad \qquad 
 \log\left|\kk^\sigma w_0\right|+\OO(\eps\log\eps)\\
\sigma^*=&\,\sigma\ \mathrm{sgn}\ w_0
\end{aligned}
\end{equation*}
where $\kk^\sigma>0$ are certain constants (see Section 
\ref{sec:auxiliaryfunctions} and Appendix \ref{sec:Arnold}), 
\[
 B^{\eta,\sigma} (\eta,\xi, w_0,\tau)= 
\frac{1}{\eta}\left({\bf\bar
H}_1(\eta,\xi)-{\bf\bar
H}_1(\eta,\xi+\eta
\log\left|\kk^\sigma w_0\right|)\right)
\]
and $\Theta^\sigma$ are the Melnikov potentials, which are defined as
\[
\Theta^\sigma (\eta,\xi,\tau)=\int_{-\infty}^{+\infty} \left(H_1\left(\Ga^\sigma
(\eta,\xi,\tau+t),t-\tau\right)-H_1\left(\eta,\varphi+\eta 
t,0,0,t-\tau\right)\right)\,dt,
\]
where $\Ga^\sigma$ are the time parameterization of the pendulum separatrices,
that is 
\[
\Ga^\sigma (\eta,\xi,\tau)=\left(\eta, \xi+\eta\tau, 4\arctan
(e^{\sigma\tau}),\frac{2\sigma}{\cosh \tau}\right).
\]
Now we consider a more general set up, proposed by Treschev.

\subsection{The set up}\label{sec:setup}

Consider a 
Hamiltonian system
\begin{equation}\label{def:HamiltonianOriginal}
H_\eps(I,\varphi,p,q,t)=H_0(I,p,q)+\eps H_1(I,\varphi,p,q,t),
\end{equation}
where $I\in \mathbb R^{n}$ are actions,
$\varphi \in \mathbb T^n$ 
are angles and $(p,q)$ belong to an open domain $D\subset\RR^2$. 
Even if not written explicitly, the Hamiltonian $H_1$ may depend on 
the parameter $\eps$. Fix a bounded open set $\D\subset \R^n$.

We assume that for every fixed $I\in\D$, the Hamiltonian $H_0(I,p,q)$ 
has a saddle at $(p,q)=(0,0)$ with two separatrix loops
(see Fig. \ref{fig:apriori-unstable}). In \cite{Treschev02a} it is assumed the
following hypotheses.

\begin{itemize}
 \item[\textbf{H1}] 
The function $H$ is $\CCC^5$-smooth in all arguments while $H_0$ is real-analytic in $p,q$, and $\CCC^5$-smooth in $I$.
\end{itemize}

We consider the  alternative assumption. 
\begin{itemize}

\item[\textbf{H1$'$}] The function $H_0$ is $\CCC^r$
 and $H$ is $\CCC^s$-smooth in all arguments for 
$s\ge 6$ and $r\geq 8s+2$.  
\end{itemize}

That is, we admit lower regularity on $H_0$. On $H_1$ we assume one degree more
of regularity than in \cite{Treschev02a}. It is needed to have  better 
estimates of the separatrix map.
%
%
\begin{itemize}

 \item[\textbf{H2}] For all points $I^0\in\D$, the function $H(I^0,p,q)$ 
has a non-degenerate saddle point at $(p,q)=(p^0,q^0)$ smoothly 
depending on $I$. For all $I^0\in\D$, $(p^0,q^0)$ belongs to 
a connected component of the set 
$$
\{(p,q):\,H_0(I^0,p,q)=H_0(I^0,p^0,q^0)\}.
$$ 
Moreover, 
$(p^0,q^0)$ is the unique critical point of $H_0(I^0,p,q)$ 
in this component.
\end{itemize}

\begin{remark}\label{rmk:saddle}
 Using Prop.1, \cite{Treschev02a}, if one  
assumes that the saddle is at a certain point $(p,q)=(p^0,q^0)$ 
which depends smoothly on $I$, then, one can perform a
symplectic change of coordinates so that the critical point 
is at $(p,q)=(0,0)$ for all $I\in\D$. After such a coordinate 
change $\CCC^r$ in \textbf{H1} is replaced by $\CCC^{r-2}$.   
\end{remark}


We denote the loops of the ``eight''  given by Hypothesis \textbf{H2} by $\ga^\pm (I^0)$. These loops have the natural 
orientation generated by the flow of the system 
(see Fig. \ref{fig:apriori-unstable}). We can define an orientation 
in $D$  by the coordinate system $(p,q)$.

\begin{itemize}
 \item[\textbf{H3}] For all $I^0\in \D$, the natural orientation 
of $\ga^\pm(I^0)$ coincides with the orientation of the domain, 
i. e. the motion of the separatrices is counterclockwise.
\end{itemize}

In \cite{Treschev02a}, Treschev defines the separatrix map 
for Hamiltonians satisfying these hypotheses and obtains a
formula for this map with certain remainder terms. The goal 
of this paper is to refine Treschev formulas in several aspects. 

Note that the formulas for the separatrix map present 
certain differences in what are called {\it a non-resonant} and 
{\it a resonant} regime (see below). Treschev provides global 
formulas for the separatrix map. Here we separate the two 
regimes and give more precise formulas. 
The refinements we do are the following.
\begin{itemize}
\item For the non-resonant regime we compute the separatrix 
map up to {\it 2nd order in $\eps$.} 
\item For the resonant regime we give formulas in slow-fast 
variables and we improve the size of the remainders.
\end{itemize}
We obtain formulas for general Hamiltonians, but also pay 
attention to the particular case of the generalized Arnold example 
\eqref{def:ArnoldGeneralized}. The results for such model are 
presented in Appendix \ref{sec:Arnold}.

\

{\bf Acknowledgement} The authors thank Dmitry Treschev 
for helpful discussions and remarks on the preliminary 
version of the paper. The authors also acknowledge many 
useful discussion with Ke Zhang. The first author is 
partially supported by the Spanish MINECO-FEDER Grant    
 MTM2012-31714 and the Catalan Grant 2014SGR504. 
The second author acknowledges NSF for partial support 
grant DMS-5237860.

\section{The separatrix map: Treschev's results}\label{sec:Treshev}
We devote this section to define the separatrix map and 
 state the results obtained 
in \cite{Treschev02a}.

We want to  define  a separatrix map for points whose 
$(p,q)$-components are ``near'' the unperturbed separatrices 
(see the shaded region on Fig. \ref{fig:figure-eight} below). 
For the unperturbed system  there is a
normally hyperbolic invariant cylinder 
$\Lb=\{p=q=0\}$ and for each $I^0\in \D$ we have invariant 
tori 
\[
\Lb(I^0):=\{I=I^0, p=q=0\}=\Lb\cap \{I=I^0\}.
\]  
These tori are (partially) hyperbolic, i.e there are expanding 
and contracting directions dominating the other ones. 
There exist two asymptotic manifolds,
\[
\begin{split}
\widehat  \Gamma^\pm(I^0)& \subset 
\{(I^0,\varphi,p,q,t):\ \varphi,t \in \T,\ H_0(I^0,p,q)=0
\}\\
\widehat  \Gamma^\pm(I^0)& =\{I^0\} \times \T \times
\gamma^\pm(I_0)\times \T,
\end{split}
\]
where $\gamma^\pm(I_0)$ are the two separatrices in the $(p,q)$ plane. The 
manifolds
$\widehat \Gamma^\pm(I^0)$ consist of unperturbed solutions that approach
$\Lb(I^0)$.

Assume that  $\D$ is an open connected domain with 
compact closure $\overline \D$. In what follows, we consider 
the dynamics of the non-perturbed system in 
a neighbourhood of the unstable
and stable manifolds of the cylinder $\Lb$: 
\[
\widehat   \Gamma= \cup_{I \in \overline \D}
 \left( \widehat  \Gamma^+(I) \cup 
\widehat  \Gamma^-(I)\right).
\]
This neighbourhood contains the most interesting part of 
the perturbed dynamics. It is convenient to pass to the time 
one map $T_\eps$: for any point $(I,\varphi,p,q)$
\[
T_\eps:(I,\varphi,p,q) \longrightarrow (I(1),\varphi(1),p(1),q(1)),
\]
where $(I(t),\varphi(t),p(t),q(t))$ is the solution of 
the Hamiltonian system given by $H_\eps$ with initial conditions 
$(I(0),\varphi(0),p(0),q(0))=(I,\varphi,p,q)$. The map $T_0$ 
has $1$-dimen\-sional hyperbolic tori $L(I) = \pi (\Lb(I))$, 
where the map $\pi: (I,\varphi,p,q,t) \to (I,\varphi,p,q)$ is 
the natural projection. Let $\Sigma^\pm(I) = 
\pi (\widehat  \Gamma^\pm(I))$ be the invariant 
manifolds (see Fig. \ref{fig:figure-eight}). 
We define the separatrix map $\SM_\eps$ corresponding to 
$T_\eps$ in a neighbourhood of the set
\[
\Sigma= \cup_{I \in \overline \D}
 \left( \Sigma^+(I) \cup \Sigma^-(I)\right) = 
\pi (\widehat  \Gamma). 
\]

\begin{figure}
\qquad \qquad \qquad \qquad \qquad \qquad 
\includegraphics[width=5.4cm]{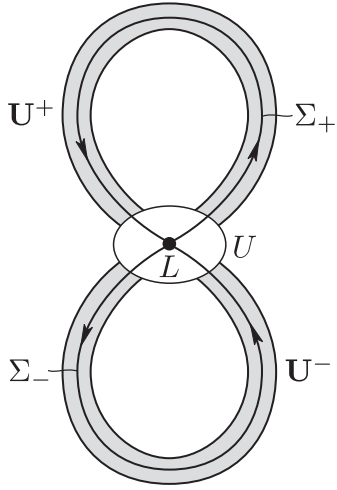}
\caption{Neighbourhoods about the figure $8$.}
\label{fig:figure-eight}
\end{figure}

Let $U$ be a small neighbourhood of the set 
$\cup_ {I \in \overline \D} L(I)$  and let $\bf U$ be 
a neighbourhood of $\Sigma$ (see Figure \ref{fig:figure-eight}, 
where $\Sigma$ is pictured in the case $n=0$). If $\bf U$ is 
sufficiently small, then ${\bf U} \setminus U$ consists of 
two connected components $\bf U^+$ and $\bf U^-$, and, 
thus, $\Sigma^\pm \subset \bf U^\pm \cup U$.

Consider a point $z \in \bf U^+\cup \bf U^-$.
Let $k_1 = k_1(z)$ be the minimal positive integer such 
that $T^{k_1}_\eps(z) \not \in \bf U^+ \cup  \bf U^-$ and let 
$k_2 =k_2 (z)$ be the minimal positive integer such
that $k_2 > k_1$ and $T^{k_2}_\eps(z) \in\bf U^+ \cup  \bf U^-$. 
The trajectory $T^k_\eps(z)$ leaves $\bf U^+ \cup  \bf U^-$ at
$k=k_1$, and it returns to $\bf U^+ \cup  \bf U^-$ at 
$k=k_2$. A point $z$ is said to be good if
$k_2 < +\infty$ and $T^{k_1}_\eps(z),\dots,T^{k_2-1}_\eps(z) \in U$. 
Setting 
\[
{\bf U}_\eps=\{z\in {\bf U^+} \cup  {\bf U^-}: \ z \text{ is good}\}
\]
we obtain maps
\[
\SM_\eps (\cdot,k_2 (\cdot)+k): \ {\bf U}_\eps \to \bf U^+ \cup  \bf U^-,
\]
\[
\SM_\eps (z,k_2 (z)+k) = T^{k_2(z)+k}_\eps(z).
\]
Here $k \in \{0,1,\dots\}$ is a parameter, and it is assumed that
$$
T^{k_2(z)+1}_\eps(z), \dots, 
T^{k_2(z)+k}_\eps(z)\in \bf U^+ \cup  \bf U^-.
$$
In \cite{PT} it is set $t_+ = k_2(z) + k$. 
Since neither $\bf U^+$ nor $\bf U^-$ serves as a fundamental domain, 
there is a considerable freedom for $k$. We would like to avoid 
this freedom. By analogy with the pendulum case (see Fig. 
\ref{fig:separatrix-fund-region}) we let $\Lb^+$ and $\Lb^-$ be 
hyperplanes in $\A \times \A \times \T\ni (I,\varphi,p,q,t)$ 
whose projection onto the $(p,q)$-component are  curves 
going from one of the connected components of the  boundary of $|H_0|<c$ to
another
and transversal to the upper and lower separatrix loops.  
We denote by $\Delta^\pm$ the subdomain of $U_c$ between 
the curves $\Lb^\pm$ and $T_\eps(\Lb^\pm)$ 
(see Fig. \ref{fig:separatrix-fund-region}). We choose such $k$
above that $T^{k_2(z)+k}_\eps(z) \in \Delta^+ \cup \Delta^-$.

To provide formulas for the separatrix map we need to set up some notation.
We define
\begin{equation}\label{def:nu}
 E(I)=H_0(I,0,0),\quad \nu(I)=\pa_I E(I):\D\longrightarrow \RR^n.
\end{equation}
The Hamiltonian $H_1(I,\varphi,0,0,t)$ has the Fourier 
expansion
$$
H_1(I,\varphi,0,0,t)=\sum_{k\in\ZZ^{n+1}}H_1^k(I)
e^{2\pi i k\cdot (\varphi,t)}.$$ 
Let $\psi:\RR\longrightarrow [0,1]$  be 
a $\CCC^\infty$ function such that $\psi(r)=0$ for any 
$|r|\geq 1$ and $\psi(r)=1$ for any $|r|\leq 1/2$. We define
\begin{eqnarray} \label{eq:boldfaceH}
\begin{split}
 \ol{\bf H}_1(I,\varphi,t) 
&=\sum_{k\in\ZZ^{n+1}}\psi\left(\frac{k\cdot
(\nu(I),1)}{\beta}\right) H_1^k(I)e^{2\pi i k\cdot (\varphi,t)}\\
 {\bf H}_1(I,\varphi)&=\ol{\bf H}_1(I,\varphi,0),
 \end{split}
\end{eqnarray}
for some constant $\beta>0$.

To obtain quantitative estimates, we follow \cite{Treschev02a} 
and use skew norms, defined as follows. 
Let $K\subset\RR^m$ be a compact set and $j\in\NN$. 
Then, for functions $f\in\CCC^j(\ol\D\times K)$ we define
\[
 \|f(I,z)\|_j^{(b)}=\max_{0\leq l'+l''\leq
j}b^{l'}\left|\frac{\pa^{l'+l''}f}{\pa^{l'_1}{I_1}\cdots\pa^{l'_n}{I_n}\pa^{
l''_1}{z_1}\cdots\pa^{l''_m}{z_m}} \right|,
\]
where $l''=l_1''+\dots+l_m''$. It is assumed that $f$ can take values 
in $\R^s$, where $s$ is an arbitrary positive integer. The norms 
$\|\cdot \|^{(b)}_r$ are anisotropic, and the variables $r$ play a special 
role in these norms because the additional factor $b$ corresponds to 
the derivatives with respect to $r$. Obviously, $\|\cdot \|^{(1)}_r$ is 
the usual $\CCC^r$-norm. This norm is similar to the skew-symmetric 
norm introduced in \cite{KaloshinZ12}, Section 7.2.

For a function $f \in \CCC^r(\bar {\mathcal D}\times \bf K)$ and 
$g\in \CCC^0(\bar {\mathcal D}\times \bf K)$ we say that
$$
f = \OO^{(b)}_0(g)\ \textup{  if }\ \|f \|_r^{(b)}\le C\,|g|^{k},
$$
where $C$ does not depend on $b$. For brevity we write
\begin{equation} \label{anisotropic-norm}
\|\cdot \|_r^*=\|\cdot \|_r^{(\eps^\dt)},  \ \ \ 
\OO^{(b)} =\OO^{(b)}_1, \ \ \  \OO^*_k=\OO^{(\eps^\dt)}_k.
\end{equation}

First, we state the result obtained in \cite{Treschev02a}. He sets 
$b=\eps^{1/4}$.

\begin{theorem}[\cite{Treschev02a}]\label{thm:Treschev} 
Let conditions {\bf[H1-H3]} hold.
Then, there exist $\CCC^2$ smooth functions 
 \[
  \la,\kk^\pm,\mu^\pm : \ol \DDD\rightarrow \RR,\qquad 
\Theta^\pm : \ol\DDD\times\TT^{n+1}\rightarrow\RR,
 \]
and canonical coordinates $(\eta,\xi,h,\tau)$ such that the following
conditions hold.
\begin{itemize}
 \item $\omega=d\eta\wedge d\xi+dh\wedge d\tau$. 

\item $\eta=I+\OO^{(\eps^{1/4})}(\eps^{3/4}, H_0-E(I)),\ 
\xi+\nu(\eta)\tau=\varphi+f,\ h=H_0+\OO^{(\eps^{1/4})}(\eps^{3/4}, H_0-E(I)),$
where $f$ denotes a function depending only on
$(I,p,q,\eps)$ satisfying $f(I,0,0,0)=0$\footnote
{One can show that $f=\mathcal O^{(\eps^{1/4})}(w^\sg_0+\eps)$.}.

\item Define
\begin{equation}\label{def:omega0}
w_0^\sigma=h^*-E(\eta^*)-\eps {\bf H}_1 (\eta^*,
\xi+\nu(\eta^*)\tau+\mu^\sigma(\eta^*)).
\end{equation}
For any $\sigma=\{+,-\}$ and $(\eta^*,\xi, h^*,\tau)$ such that 
\begin{equation}\label{cond:Treshev}
\begin{split}
c\ii\eps^{5/4}|\log\eps|<|w_0^\sigma|\leq c\eps^{7/8}\quad \\
|\tau|<c\ii\quad c<|w_0^\sigma|\,e^{\la(\eta^*)\ol t}<c\ii,
\end{split}
\end{equation}
the separatrix map $(\eta^*, \xi^*,h^*,\tau^*)=
\SM(\eta, \xi,h,\tau)$ is defined implicitly as follows
\begin{equation}\label{def:SM:Treshev}
\begin{aligned} 
 \eta^*=&\eta-&\eps\pa_\xi\Theta^\sigma(\eta^*,\xi,\tau)\ -\frac{\pa_\xi
w_0^\sigma}{\la}\log\left|\frac{\kk^\sigma w_0^\sigma}{\la}\right|+{\bf O}_2\\
 \xi^*=&\xi+\mu^\sigma+&\eps\pa_{\eta^*}\Theta^\sigma(\eta^*,\xi,\tau)+\frac{ \pa_{\eta^*}
w_0^\sigma}{\la}\log\left|\frac{\kk^\sigma w_0^\sigma}{\la}\right|+{\bf O}_1\\
 h^*=&h-&\eps\pa_\tau\Theta^\sigma(\eta^*,\xi,\tau)\ -\frac{\pa_\tau
w_0^\sigma}{\la}\log\left|\frac{\kk^\sigma w_0^\sigma}{\la}\right|+{\bf O}_2\\
 \tau^*=&\tau+\ol t+&\frac{\pa_{h^*}
w_0^\sigma}{\la}\log\left|\frac{\kk^\sigma w_0^\sigma}{\la}\right|+{\bf O}_1\\
\sigma^*=&\sigma\ \mathrm{sgn}w_0^\sigma,&
\end{aligned}
\end{equation}
where $\la$, $\kk^\pm$ and $\mu^\pm$ are functions of $\eta^*$, $\ol t$ is an
integer such that
\begin{equation} \label{eq:time-interval}
 \left|\tau+\ol t+\frac{\pa_{h^*}
w_0^\sigma}{\la}\log\left|\frac{\kk^\sigma w_0^\sigma}{\la}\right|\right|<c\ii
\end{equation}
and ${\bf O}_1=\OO^{(\eps^{1/4})}(\eps^{7/8})\log^2\eps$, 
${\bf O}_2=\OO^{(\eps^{1/4})}(\eps^{5/4})\log^2\eps$.
\end{itemize}
\end{theorem}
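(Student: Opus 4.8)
The plan is to reconstruct the separatrix map as a composition of three maps and track the dynamics explicitly in a neighborhood of the perturbed separatrices. First, I would fix a fundamental domain $\Delta^\sigma$ and, starting from a point $z\in\Delta^\sigma$, split the return into (i) a \emph{passage near the hyperbolic cylinder} $\Lb$, during which the orbit shadows the weakly hyperbolic tori $L(I)$, and (ii) a \emph{global excursion along the homoclinic loop} $\gamma^\sigma(I)$ back to the fundamental domain. The key is to choose good coordinates: by Theorem~\ref{thm:Treschev} (or rather its proof scheme from \cite{Treschev02a}) one obtains canonical coordinates $(\eta,\xi,h,\tau)$ adapted to the splitting, in which $\eta$ is a near-action, $h=H_0+\OO^{(\eps^{1/4})}(\cdots)$ records the energy level relative to the separatrix, $\tau$ is a ``time along the loop'' variable, and $\xi$ is conjugate to $\eta$. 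The quantity $w_0^\sigma$ in \eqref{def:omega0} measures how far the point sits from the stable manifold after the Melnikov correction; the hypothesis $|w_0^\sigma|\sim\eps$ (more precisely \eqref{cond:Treshev}) guarantees the orbit spends $\ol t\sim|\log\eps|$ iterates near $\Lb$ and then escapes.

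Next I would compute each piece. For the local passage, one linearizes $T_\eps$ near $\Lb$: after a $\CCC^r$ (really $\CCC^{r-2}$ after straightening the saddle, cf.\ Remark~\ref{rmk:saddle}) normal form, the hyperbolic dynamics in the $(p,q)$-plane is essentially a hyperbolic rotation with exponent $\la(\eta)$, and the logarithmic terms $\frac{1}{\la}\log|\kk^\sigma w_0^\sigma/\la|$ in \eqref{def:SM:Treshev} arise exactly as the time spent in a linearizing chart whose exit coordinate is $\sim w_0^\sigma$ and entry coordinate is $\OO(1)$. The shifts in $\xi$ and $\tau$ by $\partial_{\eta^*}w_0^\sigma$, $\partial_{h^*}w_0^\sigma$ times this log come from differentiating the exit-time expression with respect to the first integrals. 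For the global part, the difference between the true solution along the perturbed loop and the unperturbed pendulum-separatrix solution $\Ga^\sigma$ is, to leading order, governed by the Melnikov integral: integrating the variational equation along $\widehat\Gamma^\sigma(I)$ and using $\omega=d\eta\wedge d\xi+dh\wedge d\tau$ gives $\Delta\eta=-\eps\,\partial_\xi\Theta^\sigma$, $\Delta h=-\eps\,\partial_\tau\Theta^\sigma$ with $\Theta^\sigma$ the Melnikov potential. Here one must be careful that $\Theta^\sigma$ is evaluated at $(\eta^*,\xi,\tau)$ and not at the initial $\eta$; the discrepancy is absorbed into the error, because $|\eta^*-\eta|=\OO(\eps|\log\eps|)$ and $\Theta^\sigma$ is Lipschitz, which contributes only at order $\eps^2|\log\eps|$, well inside ${\bf O}_2$.

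Then I would collect the error terms. The main contributions to ${\bf O}_1,{\bf O}_2$ are: the truncation of the Melnikov integral at time $\sim\log\eps$ (exponentially small, hence negligible), the second-order term in the variational expansion along the loop (size $\eps^2$, but the loop has length $\sim|\log\eps|$, giving $\eps^2\log^2\eps$), the difference between $T_\eps^{\ol t}$ restricted to $U$ and the true linearized flow (controlled by the normal-form remainder and the regularity budget $r\ge 8s+2$), and the error in identifying the non-perturbed separatrix coordinates with the true ones near $\Delta^\sigma$ (the footnote $f=\OO^{(\eps^{1/4})}(w_0^\sigma+\eps)$). To keep track of the anisotropic $\|\cdot\|^{(b)}_r$ norms with $b=\eps^{1/4}$, I would note that each $\eta$-derivative of the log term produces a factor $w_0^{-1}\partial_\eta w_0^\sigma\sim\eps^{-1}\cdot\eps=\OO(1)$ times $b^{-1}$ worth of loss, which is exactly why the exponents drop from $\eps^{5/4}$ to $\eps^{7/8}$ between ${\bf O}_2$ and ${\bf O}_1$; this bookkeeping, rather than any single estimate, is the delicate part. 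The formula for $\sigma^*=\sigma\,\mathrm{sgn}\,w_0^\sigma$ is immediate: the sign of $w_0^\sigma$ decides on which side of the stable manifold the orbit exits, hence which loop it follows and which component of $\Delta^\pm$ it lands in.

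\medskip
\textbf{Main obstacle.} The hard part is \emph{not} the leading-order geometry — that is classical Melnikov theory plus a hyperbolic normal form — but the uniform control of the remainders in the \emph{anisotropic} norm $\|\cdot\|^{(\eps^{1/4})}_r$ over the whole admissible range \eqref{cond:Treshev} of $w_0^\sigma$, including the boundary $|w_0^\sigma|\sim\eps^{5/4}|\log\eps|$ where the logarithmic terms are largest and the number of local iterates $\ol t$ is maximal. One must show that differentiating the composition does not lose more than the declared powers of $\eps$, which forces the regularity hypothesis $r\ge 8s+2$ and the choice $b=\eps^{1/4}$; tracking how the normal-form error propagates through $\ol t\sim|\log\eps|$ iterations of $T_\eps$ without blowing up the derivatives is the technical heart of the argument.
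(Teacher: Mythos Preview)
This theorem is quoted from \cite{Treschev02a} rather than proved in the present paper; what the paper does is rerun Treschev's scheme (summarized in Section~3.3 and executed in Sections~\ref{sec:NormalForms}--\ref{sec:flowboxcoordinates:DR}) to obtain the sharper Theorems~\ref{thm:SM:SR} and~\ref{thm:SM:DR}. Your two-stage decomposition---local passage near $\Lambda$ followed by a global excursion along the loop---matches that architecture: the local passage is the transition map of Section~\ref{sec:flowestimates} and the global excursion is encoded in the gluing maps $\SSS^\pm$ of Section~\ref{sec:gluingmaps}, with a final flow-box change $\Upsilon$ (generating function~\eqref{def:GeneratingFunctionFlowBox:SR} or~\eqref{def:GeneratingFunctionFlowBox:DR}) producing the coordinates $(\eta,\xi,h,\tau)$.

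Where your account genuinely diverges is in how the Melnikov term is produced. You propose to obtain $\Delta\eta=-\eps\,\partial_\xi\Theta^\sigma$ by integrating the variational equation along $\widehat\Gamma^\sigma$ during the global step. Treschev instead first performs an additional normal form on the \emph{perturbed} system throughout the whole strip $\{|xy|\ \text{small}\}$: the Lie-series change generated by $W_0$ solving the cohomological equation of Lemma~\ref{lemma:NormalForm:FirstOrderCohomological}. In these coordinates the transition map is, up to the controlled remainder, the integrable flow of $E(I)+g(I,xy)$ (Lemma~\ref{lemma:Flow:SR}), and the \emph{unperturbed} gluing map $\SSS^\pm$ depends only on $(I,xy)$ (Lemma~\ref{lemma:GluingUnperturb}). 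The Melnikov potential then appears not as a long-time variational integral but as the mismatch of the normal-form conjugacy across the gluing: the $I$-increment is $\partial_s W_0\circ\SSS^\pm-\partial_s W_0$, which Lemma~\ref{lemma:FirstOrderMs} identifies with $\partial_\xi\Theta^\sigma$ after applying $\Upsilon$. The payoff is that every error originates from a single source---the normal-form remainder---and is propagated through explicit maps, which is exactly what makes the anisotropic $\|\cdot\|^{(\eps^{1/4})}$ bookkeeping tractable. Your direct-variational route is not wrong, but it would require a separate matching argument at the local/global boundary and independent control of the global map in the skew norm, which is messier; note also that the ``global loop'' away from the saddle has \emph{bounded} length, so your attribution of the $\log^2\eps$ factor to the loop length is off---both logarithms come from the transit time $\ol t\sim|\log w_0^\sigma|$ near the saddle and from differentiating $\log|w_0^\sigma|$.

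One slip worth flagging: you invoke the regularity budget $r\ge 8s+2$, but that is hypothesis \textbf{H1$'$}, introduced in this paper for its finite-smoothness refinements. Theorem~\ref{thm:Treschev} is stated under Treschev's original \textbf{H1}, where $H_0$ is real-analytic in $(p,q)$ and Moser's normal form is available directly, without the derivative-counting of \cite{BanyagaLW}.
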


\begin{remark}\label{rmk:InnerAndOuter} Recall that the 
Hamiltonian $H_\eps$ has a normally hyperbolic invariant cylinder 
$\Lambda_\eps$ close to $\Lambda_0=\{p=q=0\}$. 
Dynamics of $H_\eps$ restricted to $\Lambda_\eps$ is often called {\it inner dynamics}. 
Dynamics of orbits belonging to the intersection of $W^s(\Lambda_\eps)$ 
and $W^u(\Lambda_\eps)$ is called {\it outer} dynamics
and thoroughly studied in \cite{DelshamsLS06a, DelshamsLS08}.
\end{remark}

It is helpful to have Figure \ref{fig:separatrix-fund-region} in mind. 

An heuristic explanation of the separatrix map is the following: 
every orbit starting in the fundamental region $\Delta$ 
and sufficiently close to the stable manifold $W^s(\Lb_\eps)$
has three regimes: 

--- approaching the cylinder $\Lambda_\eps$;
 
--- evolution near $\Lambda_\eps$ 

--- departing away from $\Lambda_\eps$ and reaching the fundamental domain.

\vskip 0.1in 

In each of these three regimes we have: 

\vskip 0.08in 

--- Straightening invariant manifolds trivialized the first regime. 

--- The Hamiltonians $w_0^\sigma$ approximate the evolution in the 2nd 
regime. 

--- The splitting potentials $\Theta^\sigma$ describe the third regime with 
an error.

\vskip 0.1in 

It is reasonable to refer to the regime 2)  as inner 
dynamics, since orbits near the cylinder $\Lambda_\eps$ can be shadowed by 
orbits inside of the cylinder.  

It is reasonable to call outer dynamics to the regime 1)+3).
The outer dynamics to the leading order is well described 
by the splitting potentials  $\Theta^\sigma$. 

Look now at the formula (7). The separatrix map has 
contribution from the $w_0^\sigma$, which is the inner dynamics, 
and from $\Theta^\sigma$, which are the splitting potentials.


\subsection{Formulas for auxuliary functions  $\la,\kk^\pm,\mu^\pm$ and 
$\Theta^\sigma$}\label{sec:auxiliaryfunctions}
The functions $\la>0$, $\kk^\pm>0$ and $\mu^\pm\in\RR$ are defined by the
unperturbed Hamiltonian $H_0$ as follows. Hypothesis \textbf{H2} implies that
both eigenvalues of the matrix
\begin{equation}\label{def:MatrixLambda}
\La(I)=
\begin{pmatrix}
 -\pa_{pq}H_0(I,0,0)&-\pa_{qq}H_0(I,0,0)\\
 \ \ \ \pa_{pp}H_0(I,0,0)\ &\pa_{pq}H_0(I,0,0)
\end{pmatrix}
\end{equation}
are real and the trace of this matrix is equal to 0 for all $I$. 
We denote by $\la(I)$ the positive eigenvalue of this matrix. 

We denote by $\ga^\pm(I,\cdot):\RR\longrightarrow \RR^2$ 
the time parameterization of the upper and lower separatrices 
of $H_0$ in the level of energy $H_0(I,p,q)=H_0(I,0,0)$. 
We denote by $a_\pm(I)$ the left
eigenvectors of the matrix $\La(I)$, that is, $ a_+\La=\la a_+$ and $
a_-\La=-\la a_-$,
such that the $2\times 2$ matrix with $a_\pm$ as columns has determinant equal
to one. 

Then, we define
\begin{align}
 \mu^\pm(I)&=\int_{-\infty}^{+\infty}(-\nu(I)+\pa_I H_0
(I,\ga^\pm(I,t)))\,dt\label{def:mu}\\
(\kk^\pm)\ii(I)&=\lim_{t\longrightarrow+\infty}\left[\langle a_+(I),
\ga^\pm(I,-t)\rangle \langle a_-(I),
\ga^\pm(I,t)\rangle e^{2\la(I)t}\right].\label{def:kappa}
\end{align}

To define the functions $\Theta^\sigma$, we have to introduce 
some notation. We define the differential operator
\begin{equation}\label{def:OperatorPartial}
 \pa=\nu(I)\pa_\varphi+\pa_\tau.
\end{equation}
Fix $\beta>0$. The inverse operator $\pa\ii$ is defined on 
the ``resonant'' space
\[
 \mathrm{Res}=\left\{f:\ol\DDD\times\TT^{n+1}: 
f^{k,k_0}=0\ \ \text{ if }\ \ |\langle
k,\nu(I)\rangle+k_0|\leq \beta/2\right\},
\]
where $f^{k,k_0}$ are the Fourier coefficients of $f$. Then, for
$f\in\mathrm{Res}$, we have that $\pa\ii f$ is well defined and satisfies
\[
 \pa\ii f=\OO^*(\beta\ii f).
\]
As we have already mentioned, Treschev in \cite{Treschev02a} 
chooses $\beta=\eps^{1/4}$.

We define
\begin{equation}\label{def:InnerAveraging}
\vartheta^\sigma(I,\varphi,\tau)=-\pa\ii 
\left[H_1(I,\varphi,0,0,\tau)-\ol{\bf
H}_1(I,\varphi,\tau)\right].
\end{equation}
where $\ol{\bf H}_1$ is the Hamiltonian defined in \eqref{eq:boldfaceH}.

Any solution of the unperturbed system lying on 
$\wh \Ga^\pm (I)$  can be written as
\[
 (I,\varphi,p,q)(t)=\Ga^\sigma(I,\varphi,\tau+t)
 \]
with 
\[
 \Ga^\sigma(I,\varphi,\tau)=(I,\varphi+\nu(I)\tau+
\chi^\pm (I,\tau), \ga^\pm (I,\tau))
\]
where $\chi^\pm (I,\tau)$ are solutions of 
\[
 \dot \chi^\pm (I,t)=-\nu(I)+\pa_I H_0(I, \ga^\pm
(I,t)),\qquad\lim_{t\longrightarrow-\infty}\chi^\pm (I,t)=0.
\]
By the definition of $\mu^\sigma$ in \eqref{def:mu}, 
\[
 \mu^\pm (I)=\lim_{t\longrightarrow +\infty}\chi^\pm (I,t).
\]
We define
\begin{equation}\nonumber 
\begin{aligned} 
 H_\pm^\sigma(I,\varphi,\tau,t)&= \\ 
H_1&(\Ga^\sigma(I,\varphi,t),t-\tau)-
H_1(I,\varphi+\nu t+\chi^\sigma (I,\pm\infty),0,0,t-\tau).
\end{aligned}
\end{equation}
Note that $ H_\pm^\sigma(I,\varphi,\tau,t)$ tend to zero 
exponentially as $t\longrightarrow\pm\infty$.

Then,
\begin{equation}\label{def:Melnikov}
\begin{split}
 \Theta^\sigma(I,\varphi,\tau)=&
\vartheta^\sigma (I,\varphi,-\tau)-\vartheta^\sigma
(I,\varphi+\mu^\sigma,-\tau)\\
&-\int_{-\infty}^0 H^\sigma_-(I,\varphi,\tau,t)\,dt
-\int^{+\infty}_0 H_+^\sigma(I,\varphi,\tau,t)\,dt.
\end{split}
\end{equation}
In \cite{Treschev02a}  these functions are called 
\emph{splitting potentials}. The integral term
is the classical Melnikov potential (also often 
called the Poincar\'e function, see for instance \cite{DelshamsG00}).

As we have explained the purpose of this paper is to refine the formulas 
given in Theorem \ref{thm:Treschev}. 


\section{The finite harmonics setting}\label{sec:finiteharmonics}

To get more precise formulas for the separatrix map in the finite 
harmonics setting, we  consider different regions where it is defined. 
These regions  overlap each other. 

First fix some notation. Take a function
$f:\TT^n\times\RR^n\times\RR^2\times\TT\longrightarrow \RR$ with
Fourier series 
\[
f=\sum_{k\in\ZZ^{n+1}}\,f^k(I,p,q)e^{2\pi i k\cdot (\varphi,t)}.
\]
Define $\NNN$ as 
\[
 \NNN(f)=\{k\in\ZZ^{n+1}: f^k\neq 0\}
\]
and 
\[
 \NNN^{(2)}(f)=\{k\in\ZZ^{n+1}: k=k_1+k_2, k_1,k_2\in \NNN(f)\}.
\]
Consider the \emph{non-resonant region}, which stays away from the
resonances created by the harmonics in $\NNN( H_1)\cup\NNN^{(2)}( H_1)$.

Define
\begin{equation}\label{def:SR}
 \Sr_\beta=\left\{I:  \forall k\in \NNN( H_1)\cup\NNN^{(2)}( H_1),\,\,  |k\cdot
(\nu(I),1)|\geq\beta\right\},
\end{equation}
for a fixed parameter $\beta$. The complement of
the non-resonant zone is build up by the different 
resonant zones associated to the harmonics in 
$\NNN( H_1)\cup\NNN^{(2)}( H_1)$. Fix $k\in\NNN( H_1)\cup\NNN^{(2)}( H_1)$, 
then 
we define the resonant zone
\begin{equation}\label{def:DR}
 \Dr^k_\beta=\left\{I: |k\cdot
(\nu(I),1)|\leq\beta\right\}.
\end{equation}
The parameter $\beta$ in both regions will be chosen differently, so that the
different zones overlap.


We abuse notation and we redefine the norms  in
\eqref{anisotropic-norm} as
\[
\|\cdot \|_r^*=\|\cdot \|_r^{(\beta)},  \ \ \ 
\OO^{(b)} =\OO^{(b)}_1, \ \ \  \OO^*_k=\OO^{(\beta)}_k.
\] 
for fixed $\beta>0$. 
Now we can give formulas for the separatrix map in both regions.

\subsection{The separatrix map in the non-resonant regime}
\label{sec:SR}
The main result of this section is Theorem \ref{thm:SM:SR} 
which gives refined formulas for the separatrix map in 
the non-resonant zone 
(see \eqref{def:SR}). To state it we need to define an auxiliary function $w$. 
This $w$ is a slight modification of the functions $w_0^\sigma$ given in 
\eqref{def:omega0}.

Consider a function $g(\eta, r)$. It is obtained 
in Section \ref{sec:Moser} by applying Moser's normal 
form to $H_0$. This function $g$ satisfies 
$g(\eta,r)=\la(\eta) r+\OO(r^2)$, where $\la$ is
the positive eigenvalue of the matrix \eqref{def:MatrixLambda}.
Therefore, $g$ is invertible with respect to the second variable 
for small $r$. Somewhat abusing notation, call $g_r\ii$ 
the inverse of $g$ with respect to the second variable
\footnote{The subindex is to emphasize that the inverse is 
performed with respect to the variable $r$.}. Then, we define the function $w$ 
by
\begin{equation}\label{def:omega}
 w(\eta^*,h^*)=g_r\ii (\eta^*, h^*-E(\eta^*)).
\end{equation}
Note that in the nonresonant regime, $w$ does not depend on whether we are 
close to one of the unperturbed homoclinic loops or the other, as happened in 
Theorem \ref{thm:Treschev}.

\begin{theorem}\label{thm:SM:SR}
Fix $\beta>0$ and $1\ge a>0$. Let conditions 
{\bf [H1$'$,H2,H3]} hold for some  $s\ge 6$, $r\geq 8s+2$.
Then for $\eps$ 
sufficiently small there exist $c>0$ independent of $\eps$ 
and a $\CCC^{s-4}$ canonical coordinates $(\eta,\xi,h,\tau)$  
such that in the non-resonant zone $ \Sr_\beta$ the following 
conditions hold: 
\begin{itemize}
\item the canonical form $\omega=d\eta\wedge d\xi+
dh \wedge d\tau$;

\item $
\eta = I +\OO^*_{1}(\eps)+\OO^*_{2}(H_0-E(I)), \,
\xi + \nu(\eta)\tau=\varphi+f, \,
h=H_0+\OO^*_{1}(\eps)+\OO^*_{2}(H_0-E(I)),$
where $f$ denotes a function depending only on 
$(I,p,q,\eps)$ and such that $f=\mathcal O(w+\eps),\ 
f(I,0,0,0)=0$. 

\item In these coordinates 
$\SM_\eps$ has the following form. 
For any 
$\sigma\in \{-,+\}$ and $(\eta^*,h^*)$ such that 
\[
 c\ii\eps^{1+a}<|w(\eta^*,h^*)|<c\eps,\qquad |\tau|<c\ii, \qquad
c<|w(\eta^*,h^*)|\,e^{\la(\eta^*)\ol t}<c\ii,
\]
the separatrix map $(\eta^*, \xi^*,h^*,\tau^*)=
\SM_\eps(\eta, \xi,h,\tau)$ is defined implicitly as follows
 {\small 
\[
  \begin{split}
 \eta^*=&\ \eta- \ \ \eps
M_1^{\sigma,\eta}+\ \ \eps^2 M_2^{\sigma,\eta}+\ 
\ \OO_3^*(\eps)|\log\eps|\\
   \xi^*=&\ \xi+\ \pa_1 \Phi^\sg (\eta,w (\eta^*,h^*))
+\pa_\eta w(\eta^*,h^*) 
\left[\log |w(\eta^*,h^*)| + \pa_2 \Phi^\sg (\eta^*, w (\eta^*,h^*))
\right] 
\\
&+
\OO_1^*(\eps)\, |\log\eps|\\
 h^*=&\ h - \ \ \eps M_1^{\sigma,h}
+\ \ \eps^2 M_2^{\sigma,h}+\ \ \OO_3^*(\eps)\\
 \tau^*=&\ \tau+\ \qquad \qquad  \ol t \qquad \quad 
+\pa_h w(\eta^*,h^*) 
\left[\log| w(\eta^*,h^*) |+ \pa_2 \Phi^\sg (\eta^*, w (\eta^*,h^*))
\right] 
\\
&+ \OO_1^*(\eps)\,|\log\eps|,
  \end{split}
 \]}
where $w$ is the function defined in \eqref{def:omega}, 
$M^*_i$ and $\Phi^\pm$ are some $\CCC^{s-4}$ functions and 
$\bar t$ is an integer satisfying (\ref{eq:time-interval}). 
The functions $M_i^*$ are evaluated at $(\eta^*, \xi,h^*,\tau)$. 
\end{itemize}

\end{theorem}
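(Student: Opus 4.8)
The plan is to upgrade Treschev's Theorem~\ref{thm:Treschev} in two independent directions: (i) push the expansion one order further in $\eps$ in the non-resonant zone, and (ii) replace the crude $w_0^\sigma$ of \eqref{def:omega0} by the normal-form function $w$ of \eqref{def:omega}, which absorbs the pendulum-type nonlinearity exactly rather than to leading order. The natural starting point is the decomposition of the separatrix map into the three regimes described after Theorem~\ref{thm:Treschev}: the passage along the stable manifold toward $\Lambda_\eps$, the passage near $\Lambda_\eps$, and the departure along the unstable manifold. First I would invoke the Moser normal form of Section~\ref{sec:Moser} to write $H_0$ in coordinates $(\eta,r,\ldots)$ where the hyperbolic dynamics is an exact product $g(\eta,r)$; this already produces a $\CCC^{s-4}$ canonical change $\eps$-close to Treschev's coordinates (the loss of regularity $r\mapsto r-2$ in Remark~\ref{rmk:saddle}, composed with the normal form, and with the smoothing in the straightening of the perturbed invariant manifolds, accounts for the bookkeeping $s\ge6$, $r\ge 8s+2$). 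In these coordinates the middle regime contributes exactly the term $\pa_\eta w\cdot\log|w|$ and $\pa_h w\cdot\log|w|$, which is why $w$ rather than $w_0^\sigma$ appears, and why $\kk^\sigma$ has disappeared — it is now encoded in the $\pa_2\Phi^\sigma$ correction coming from matching the normal-form coordinates to the outer ones.

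The second, and main, step is to carry the outer (splitting) contribution to second order in $\eps$. To leading order the change in $(\eta,h)$ over one excursion is, by a standard Melnikov computation, $-\eps\,\pa_\xi\Theta^\sigma$, $-\eps\,\pa_\tau\Theta^\sigma$; to get the $\eps^2$ terms $M_2^{\sigma,\eta}$, $M_2^{\sigma,h}$ I would integrate the variational equation along the true perturbed homoclinic orbit rather than the unperturbed separatrix $\Ga^\sigma$, i.e. expand the solution of the Hamiltonian flow in $\eps$ to second order and substitute into the transfer from $\Lambda^-$ to $\Lambda^+$. This requires controlling the first-order correction to the homoclinic trajectory uniformly on the (non-compact) time interval, which is where the exponential decay of $H^\sigma_\pm$ and the non-resonance condition $I\in\Sr_\beta$ (so that $\pa^{-1}$ and the averaging $\vartheta^\sigma$ are well defined with the stated $\OO^*(\beta^{-1})$ bounds) are used. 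The anisotropic skew norms $\|\cdot\|_r^{(\beta)}$ are the right device to keep track of how $\eta$-derivatives cost a factor $\beta^{-1}$ while the order in $\eps$ is preserved; one propagates these norm estimates through the composition of the three regime maps, using that on the overlap region $|w|\sim\eps$ the logarithmic factors only cost $|\log\eps|$.

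The third step is the implicit-function packaging: the formulas for $\xi^*,\tau^*$ are implicit because $\ol t$ and the time spent near $\Lambda_\eps$ depend on $w(\eta^*,h^*)$, which depends on the output; one checks that on the region $c^{-1}\eps^{1+a}<|w|<c\eps$, $|w|e^{\la\ol t}\in(c,c^{-1})$ the map $(\eta,\xi,h,\tau)\mapsto(\eta^*,\xi^*,h^*,\tau^*)$ is well defined and the remainder terms $\OO^*_3(\eps)|\log\eps|$, $\OO^*_1(\eps)|\log\eps|$ close up — the $|\log\eps|$ rather than $\log^2\eps$ of Theorem~\ref{thm:Treschev} being precisely the gain from the extra degree of smoothness in \textbf{H1$'$} and from separating the resonant and non-resonant analyses. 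I expect the genuinely hard part to be the uniform second-order control of the perturbed homoclinic orbit over the infinite time interval while simultaneously tracking the $\beta$-anisotropic derivative losses: all the qualitative ingredients are classical, but certifying that the $\eps^2$ Melnikov-type integrals converge and that their $\eta$-derivatives obey the $\OO^*_2(\eps^2)$ bound, rather than $\OO(\eps^2\beta^{-1})$ with a fatal power of $\beta^{-1}$, is the delicate point and will dictate the precise hypotheses on $r$ relative to $s$.
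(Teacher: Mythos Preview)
Your decomposition into three regimes and the use of the Moser normal form for the inner passage are correct and match the paper. However, your plan for obtaining the second-order terms $M_2^{\sigma,\eta}$, $M_2^{\sigma,h}$ --- by expanding the perturbed homoclinic orbit to second order in $\eps$ and integrating a second-order Melnikov-type expression over the infinite time interval --- differs substantially from the paper's argument and, as you yourself anticipate, runs into the delicate problem of uniform control over non-compact time with the correct anisotropic bounds.

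The paper sidesteps this entirely. Rather than computing a second-order Melnikov integral, it performs a \emph{two-step Lie normal form} (Lemmas~\ref{lemma:NormalForm:FirstOrderCohomological} and~\ref{lemma:NormalForm:SecondOrderCohomological}) over the whole near-separatrix region, not just near the saddle. The generating Hamiltonian $\eps W=\eps W_0+\eps^2 W_1$ solves two successive cohomological equations, and in the non-resonant zone (where $\ol{\bf H}_j=0$ for $j=1,2,3$) the transformed Hamiltonian becomes integrable up to $\OO^*(\eps^3)$ remainders (equation~\eqref{def:HamNFSR}). The flow in these coordinates is then essentially trivial (Lemma~\ref{lemma:Flow:SR}): $I$ and $xy$ are conserved up to $\OO^*(\eps^3)$ over the passage time $\ol t\sim|\log\eps|$. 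The second-order terms $M_2^{\sigma,*}$ arise \emph{algebraically}, from the composition of the normal-form change (Lemma~\ref{lemma:NormalFormChangeOfCoordinates}) with the \emph{unperturbed} gluing map of Lemma~\ref{lemma:GluingUnperturb}, followed by the flow-box change~\eqref{def:GeneratingFunctionFlowBox:SR} in Lemma~\ref{lemma:SM:SR:1}. No infinite-time variational estimate along a perturbed orbit is needed; the only integrals that appear are those defining $W_0$ and $W_1$, and these converge exponentially because they are written in the Moser coordinates where the hyperbolic dynamics is linearized. The identification of $M_1^{\sigma,*}$ with derivatives of the splitting potential $\Theta^\sigma$ is done \emph{a posteriori} (Lemma~\ref{lemma:FirstOrderMs}), by unraveling the composition.

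In short, your outlined route is plausible but would have to confront exactly the difficulty you flag at the end; the paper's two-step normal form is precisely the device that makes that difficulty disappear, turning the second-order computation into a finite algebraic composition rather than an infinite-time perturbative integral.
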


This theorem is proven in Section \ref{sec:flowboxcoordinates:SR}.

\begin{remark}
The change of coordinates in the above Theorem is 
$\eps$-close (in the $\CCC^2$-norm) to the system of 
coordinates obtained in Theorem \ref{thm:Treschev}.

The function $M^*_i$ and $\Phi^\pm$ are defined in 
Lemmas \ref{lemma:GluingUnperturb} and \ref{lemma:SM:SR:1}  
respectively,

The functions $\Phi^\sigma$ are the generalizations of 
the functions $\mu^\sigma$ and $\kk^\sigma$. Indeed, 
they satisfy
\[
\pa_\eta\Phi^\sigma(\eta,r)=\mu^\sigma(\eta)+\OO^*_2(r)\ 
\ \text{ and } \ \  e^{\pa_r\Phi^\sigma(\eta,r)}=\kk^\sigma(\eta)+\OO^*_2(r). 
\]
Moreover, the functions $M_i^{\sigma, i}$ satisfy
\[
 M_1^{\sigma, \eta}=\pa_\xi\Theta^\sigma +\OO^*_2(w),
\qquad M_1^{\sigma, h}=\pa_\tau\Theta^\sigma+\OO^*_2(w), 
\]
where $\Theta^\sigma$ is the  splitting potential 
given  in \eqref{def:Melnikov} (compare with the formulas in Theorem
\ref{thm:Treschev}). These last formulas are obtained in Lemma 
\ref{lemma:FirstOrderMs}.
\end{remark}

\begin{remark} \label{rem:non-res-heuristic}
We compare this Theorem with Theorem \ref{thm:Treschev}
and the Remark afterward. Notice that the inner dynamics
in non-resonant zones can be made integrable and essentially 
decoupled from the outer dynamics.
This is reflected in the fact that the $\eta$ and the $h$-components 
have no contribution from $w$ as the inner dynamics is integrable. 

The splitting potential contributions and the integrable contributions 
to rotation of the angular components are essentially the same. 
\end{remark}

Consider the following 
useful example:
\[
H_\eps (I,\varphi,p,q,t)=
\frac{I^2}{2}+\frac{p^2}{2}+(\cos q-1)+\eps H_1(I,\varphi,t).
\]
Notice that the perturbation does not affect the pendulum. 
As the result the perturbed system is the product of 
the pendulum and a perturbed rotor. 
Notice that $\overline {\bf H}_1(I,\varphi,t)=H_1(I,\varphi,t)$ and it is
non-vanishing. Moreover, the splitting potential $\Theta^\sg$ 
vanishes. Since $w^\sg_0$, defined in (\ref{def:omega0}), 
measures the trasition time  $\bar t$ of the separatrix map 
$\SM_\eps$ (see (\ref{cond:Treshev})), it has to be 
independent of angles of the rotor and time. This implies that 
in (\ref{def:omega0}) after substution of $I,p,q,\varphi$ there 
is a cancellation of $\eps {\bf H}_1$ with $\eps$-terms coming 
from $(I-\eta)$ and $(H_0-h)$. This cancellation is not so easy 
to see.

\subsection{The separatrix map in the resonant zones}\label{sec:DR}

To give refined formulas in the resonant zones $\Dr_\beta^k$
(see \eqref{def:DR}) we restrict to Hamiltonian systems 
of two and a half degrees of freedom. Namely,
$I$ and $\varphi$ are one dimensional. 

We consider the resonance 
$(\nu(I),1)\cdot (k_0,k_1)=0$, $(k_0, k_1)\in\NNN(H_1)\cup \NNN^{(2)}(H_1)  
\subset\ZZ^2$, where $\nu(I)$ is the frequency map introduced in 
\eqref{def:nu}. We assume that it is located at $I=0$. Let $A$ be
the variable conjugate to time. Perform a change to 
slow-fast variables. Namely, consider the following change of coordinates
\begin{equation} \label{eq:def-tildeA}
 (J,\theta, D, t)=
\left( \frac{I}{k_0},k_0\varphi+
k_1t,A-\frac{k_1}{k_0}I,
t\right)
\end{equation}
which is symplectic. We obtain the following Hamiltonian
\[
 \wt H(J,\theta,p,q,t)=\wt  H_1\left(k_0I,k_0\ii (\theta-k_1 t), p,q,t\right).
 \]
Note that in $\Dr_\beta^{(k_0,k_1)}$ and in  slow-fast variables, the 
Hamiltonian $\ol{{\bf H}}_1$  is time independent and thus  
$\ol{{\bf H}}_1={{\bf H}}_1$. It is defined as
\[
 \ol{{\bf H}}_1(J,\theta)=\sum_{\ell\in\ZZ}\wt H^{\ell}(J)e^{2\pi i 
\ell\theta}=\sum_{\ell\in\ZZ} H^{\ell(k_0,k_1)}(k_0J)e^{2\pi i \ell\theta}.
\]
To state the next theorem, we recall the definitions of $\nu$ in 
\eqref{def:nu} and of  $\la$, $\mu^\sigma$, $\kk^\sigma$ and $\Theta^\sigma$ in 
Section \ref{sec:auxiliaryfunctions}. 
We also consider a slight modification of the functions $w_0^\sigma$ in
\eqref{def:omega0},
\begin{equation}\label{def:omega01}
w_0=h^*-E(\eta^*)-\eps {\bf H}_1 (\eta^*,
\xi^*).
\end{equation}
This definition is implicit since, as shown in Theorem \ref{thm:SM:DR}, $\xi^*$
depends itself on $w_0$. The function $w_0$, as a function of $\eta^*$ and 
$\xi^*$ is independent of $\sigma$. It certainly depends on $\sigma$ if 
considered as a function with respect to the original variables. 
As happened in Theorem \ref{thm:Treschev}, and unlike Theorem \ref{thm:SM:SR}, 
the functions $w_0$ depend on the homoclinic loop
through the functions $\mu^\sigma$ (which appear in the definition of $\xi^*$).

We also define the functions
\begin{equation}\label{def:DR:Bs}
 \begin{split}
B^{\eta,\sigma} (\eta,\xi,w_0^\sigma,&\tau)= 
 -\frac{1}{\nu(\eta)}\left({\bf\bar
H}_1(\eta,\xi+\nu(\eta)\tau)-{\bf\bar
H}_1(\eta,\xi)\right)\\
\frac{1}{\nu(\eta)}&\left({\bf\bar
H}_1(\eta,\xi+\nu(\eta)\tau+\mu^\sigma(\eta))-{\bf\bar
H}_1(\eta,\xi+\frac{\nu(\eta)}{\la(\eta)}\log\left|\frac{\kk^\sigma 
w_0}{\la}\right|+\mu^\sigma(\eta))\right),\\
B^{h,\sigma} (\eta,\xi,&\tau)= 
\ {\bf\bar
H}_1(\eta,\xi+\nu(\eta)\tau+\mu^\sigma(\eta))-{\bf\bar
H}_1(\eta,\xi+\nu(\eta)\tau).
\end{split}
\end{equation}

\begin{theorem}\label{thm:SM:DR}
Fix $\beta>0$ and $1\ge a>0$. Let conditions {\bf [H1$'$, H2,H3]} 
hold for some  $s\ge 6$, $r\geq 8s+2$. Then for 
 $\eps$ sufficiently small 
there exist $c>0$ independent of $\eps$ and a $\CCC^{s-4}$ 
canonical coordinates $(\eta,\xi,h,\tau)$ such that  in 
the resonant zone $ \Dr^k_\beta$ the following conditions hold:
\begin{itemize}
\item the canonical form 
$\om=d\eta\wedge d\xi+ dh\wedge d\tau$;  
\item $\eta = I +\OO^*_1(\eps,H_0-E(I)), \, 
\xi + \nu(\eta)=\varphi+f, \,
h=H_0+\OO^*_1(\eps,H_0-E(I)),$
where $f$ denotes a $\CCC^{s-4}$ function depending only on 
$(I,p,q,\eps)$ and such that $f(I,0,0,0)=0$ and 
$f=\mathcal O(w_0+\eps)$. 

\item In these coordinates 
$\SM_\eps$ has the following form. For any 
$\sigma\in \{-,+\}$ and $(\eta^*,h^*)$ such that 
\[
 c\ii\eps^{1+a}<|w_0(\eta^*,h^*,\xi^*)|<c\eps,\quad |\tau|<c\ii, \quad
c<|w_0(\eta^*,h^*,\xi^*)|\,e^{\la(\eta^*)\ol t}<c\ii,
\]
 the separatrix map
$(\eta^*, \xi^*,h^*,\tau^*)=
\SM(\eta, \xi,h,\tau)$ is defined implicitly as follows
\begin{equation*}
\begin{aligned}
\eta^*=&\eta &&+\eps\pa_\xi\Theta^\sigma(\eta,\xi,\tau)&&+\eps 
B^{\eta,\sigma} (\eta,\xi,h,\tau)&&+\OO^*(\eps^{5/3})\\
 \xi^*=&\xi+\mu^\sigma&&+\frac{ 
\nu}{\la}\log\left|\frac{\kk^\sigma 
w_0^\sigma}{\la}\right|&&+\eps B^{\xi,\sigma} (\eta,\xi,w_0,\tau, \ol 
t)&&+\OO^*(\eps)\\
 h^*=&h&&+\eps\pa_\tau\Theta^\sigma(\eta,\xi,\tau)&&+\eps 
B^{h,\sigma} (\eta,\xi,\tau)&&+\OO^*(\eps^{5/3})\\
 \tau^*=&\tau+\ol t &&+\frac{1}{\la}\log\left|\frac{\kk^\sigma 
w_0^\sigma}{\la}\right|&&+\eps B^{\tau,\sigma}(\eta,\xi,w_0,\tau, \ol 
t)&&+\OO^*(\eps)\\
\sigma^*=&\sigma\ \mathrm{sgn}\,w_0^\sigma
\end{aligned}
\end{equation*}
where
\[
 \begin{split}
 B^{\xi,\sigma} (\eta,\xi,h,\tau,\ol t)&= 
f_1(\eta,\xi,h,\tau)\tau+f_2(\eta,\xi,h,\tau)\log \left|\frac{\kk^\sigma 
w_0}{\la}\right|+f_3(\eta,\xi,h,\tau, \ol t)\,\ol t\\
 B^{\tau,\sigma} (\eta,\xi,h,\tau ,\ol t)&=  
g_1(\eta,\xi,h,\tau)\tau+g_2(\eta,\xi,h,\tau)\log \left|\frac{\kk^\sigma 
w_0}{\la}\right|+g_3(\eta,\xi,h,\tau, \ol t)\,\ol t
\end{split}
\]
for certain $\CCC^{s-4}$  functions  $f_i$, $g_i$ which satisfy 
$f_i,g_i=\OO^*(1)$. Therefore, the functions $B^{z,\sigma}$ satisfy
\[
 B^{\eta,\sigma}, B^{h,\sigma}=\OO^*(1),\qquad B^{\xi,\sigma}, 
B^{\tau,\sigma}=\OO^*(\log\eps).
\]
\end{itemize}
\end{theorem}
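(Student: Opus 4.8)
The plan is to reduce Theorem \ref{thm:SM:DR} to Treschev's Theorem \ref{thm:Treschev} by the same mechanism used in the non-resonant case, but now keeping track of the fact that in the resonant zone $\Dr^k_\beta$ only one harmonic (and its ``square'' from $\NNN^{(2)}(H_1)$) is near-resonant, so the averaged Hamiltonian $\ol{\bf H}_1 = {\bf H}_1$ is a genuine function of $\theta$ that cannot be removed. First I would pass to the slow-fast variables \eqref{eq:def-tildeA}, so the system becomes a two-and-a-half degree of freedom Hamiltonian $\wt H(J,\theta,p,q,t)$ whose resonance sits at $J=0$; here the small divisor $k\cdot(\nu(I),1)$ is replaced by the slow frequency, which is $O(\sqrt\eps)$ on the scale of the resonance, and one works with the anisotropic norms $\|\cdot\|_r^{(\beta)}$ from \eqref{anisotropic-norm}. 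Then I would feed Treschev's separatrix map \eqref{def:SM:Treshev} into this coordinate system, and the main task is to simplify the terms $\eps\pa_\bullet\Theta^\sigma(\eta^*,\xi,\tau)$ and the logarithmic terms $\frac{\pa_\bullet w_0^\sigma}{\la}\log|\kk^\sigma w_0^\sigma/\la|$ by (i) replacing $w_0^\sigma$ evaluated at $\eta^*$ by the simpler $w_0 = h^* - E(\eta^*) - \eps{\bf H}_1(\eta^*,\xi^*)$ of \eqref{def:omega01}, picking up controlled errors, and (ii) extracting the explicit $\eps$-linear contributions $B^{\eta,\sigma},B^{h,\sigma},B^{\xi,\sigma},B^{\tau,\sigma}$ of \eqref{def:DR:Bs} from the $\xi$- and $\tau$-shifts in the arguments of ${\bf \bar H}_1$.

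The heart of the computation is the $\xi$- and $\tau$-equations. In the resonant regime the transition time involves $\frac{1}{\la}\log|\kk^\sigma w_0^\sigma/\la|$, which is $O(\log\eps)$, and this large time gets multiplied by $\nu(\eta)$ (itself $O(\sqrt\eps)$ after the slow-fast rescaling) in the angular drift; substituting this shifted angle back into ${\bf\bar H}_1$ inside $\Theta^\sigma$ and into the $\eta^*,h^*$ equations produces precisely the $\eps B^{z,\sigma}$ corrections. Concretely I would Taylor-expand ${\bf\bar H}_1(\eta,\xi + \nu\tau + \cdots)$ about the unshifted angle, organize the result into the three pieces $f_1\tau + f_2\log|\kk^\sigma w_0/\la| + f_3\bar t$ (respectively with $g_i$), and check $f_i,g_i = \OO^*(1)$, which follows because each is built from $\nu(\eta)^{-1}(\partial_\theta{\bf\bar H}_1)$ evaluated along the orbit and $\nu$ is bounded below by $\beta$ divided by the harmonic — exactly the kind of bound the skew norms are designed to absorb. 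The identification $\pa_\eta\Phi^\sigma = \mu^\sigma + \OO^*_2$, $e^{\pa_r\Phi^\sigma} = \kk^\sigma + \OO^*_2$ of the auxiliary functions follows from their definitions \eqref{def:mu}, \eqref{def:kappa} together with Moser's normal form for $H_0$ (the function $g$ of \eqref{def:omega}), and the first-order identities $M_1^{\sigma,\eta} = \pa_\xi\Theta^\sigma + \OO^*_2(w)$, $M_1^{\sigma,h} = \pa_\tau\Theta^\sigma + \OO^*_2(w)$ are imported from the cited lemmas.

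The main obstacle I anticipate is bookkeeping the error orders through the slow-fast rescaling: after \eqref{eq:def-tildeA} the ``$\eps$'' that naturally appears in front of the perturbation is, on the resonant scale, effectively of size $\eps$ but the action variable $J = I/k_0$ lives on a $\sqrt\eps$-neighborhood of the resonance, so Treschev's remainders ${\bf O}_1 = \OO^{(\eps^{1/4})}(\eps^{7/8})\log^2\eps$ and ${\bf O}_2 = \OO^{(\eps^{1/4})}(\eps^{5/4})\log^2\eps$ must be re-expressed in the new norms $\|\cdot\|_r^{(\beta)}$ and shown to be dominated by the advertised $\OO^*(\eps^{5/3})$ and $\OO^*(\eps)$ — this is where the regularity assumption {\bf H1$'$} with $s\ge 6$, $r\ge 8s+2$ is actually consumed, since each differentiation in the anisotropic norm costs a power of $\beta^{-1}$ and one needs enough derivatives to spare. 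A secondary subtlety is that $w_0$ in \eqref{def:omega01} is defined only implicitly (through $\xi^*$), so before any estimate one must run a fixed-point/implicit-function argument showing $w_0$ is well-defined and $\CCC^{s-4}$ on the region $c^{-1}\eps^{1+a} < |w_0| < c\eps$, with $\xi^* = \xi + \mu^\sigma + \frac{\nu}{\la}\log|\kk^\sigma w_0/\la| + O(\eps\log\eps)$ giving the contraction. Once these are in place, the five displayed equations of the theorem follow by collecting terms, and the final size bounds $B^{\eta,\sigma},B^{h,\sigma} = \OO^*(1)$, $B^{\xi,\sigma},B^{\tau,\sigma} = \OO^*(\log\eps)$ are immediate from the structure of \eqref{def:DR:Bs} and the $f_i,g_i = \OO^*(1)$ bounds.
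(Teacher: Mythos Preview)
Your approach has a genuine gap: you propose to feed Treschev's Theorem~\ref{thm:Treschev} as a black box into slow--fast coordinates and then post-process the output to extract the $B^{z,\sigma}$ terms and sharper remainders. This cannot work, because Treschev's remainders ${\bf O}_2=\OO^{(\eps^{1/4})}(\eps^{5/4})\log^2\eps$ are \emph{larger} than the target $\OO^*(\eps^{5/3})$ (since $\eps^{5/4}\gg\eps^{5/3}$ for small $\eps$). No change of anisotropic norm or rescaling can turn a coarse remainder into a finer one; the improved bounds in Theorem~\ref{thm:SM:DR} come from redoing the construction with more precision at every stage, not from manipulating Treschev's end formula. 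You also misread the geometry of the resonant zone: $\Dr^k_\beta$ has width $\beta$ \emph{fixed independent of $\eps$} (this is stressed explicitly in the paper and is one of the main differences from \cite{Treschev02a}, where $\beta=\eps^{1/4}$), so $J$ does not live in a $\sqrt\eps$-neighborhood and $\nu$ is not bounded away from zero --- on the contrary, $\wt\nu(0)=0$, which is why the formulas for $B^{\eta,\sigma}$ involve the difference quotient $\nu^{-1}({\bf\bar H}_1(\cdot)-{\bf\bar H}_1(\cdot))$ with a removable singularity at $\nu=0$.

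What the paper actually does in Section~\ref{sec:flowboxcoordinates:DR} is reconstruct the separatrix map from scratch in the resonant regime. After passing to slow--fast variables, it (i) applies the two-step Lie normal form of Lemmas~\ref{lemma:NormalForm:FirstOrderCohomological}--\ref{lemma:NormalForm:SecondOrderCohomological} to push the non-integrable part of $\HH_\eps$ to $\OO^*(\eps^3+\eps^2 xy+\eps(xy)^2)$; (ii) integrates the resulting near-integrable flow explicitly in Lemma~\ref{lemma:Flow:DR}, obtaining the functions $G_1,F_1,\Phi$ in closed form with errors $\OO^*(\eps^{11/6})$; (iii) composes with the gluing maps of Section~\ref{sec:gluingmaps:DR} to get $\FF=g^{\ol t}_\eps\circ G^\sigma_\eps$ with explicit $\PP^z_1$ terms; and (iv) applies a \emph{new} flow-box change via the generating function~\eqref{def:GeneratingFunctionFlowBox:DR}, which differs from both Treschev's and the non-resonant~\eqref{def:GeneratingFunctionFlowBox:SR} by the extra term $\eps\int_0^\tau\ol{\bf H}_1(J,\xi+\nu(J)s)\,ds$ --- this term is precisely what absorbs the secular growth of the slow angle near resonance. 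The $B^{z,\sigma}$ then drop out of the composition in Lemma~\ref{lemma:SM:DR:1}, with the Melnikov identification coming from Lemma~\ref{lemma:FirstOrderMs}. The $\eps^{5/3}$ remainder is inherited from the $\eps^{11/6}$ flow error of Lemma~\ref{lemma:Flow:DR} after one more composition, not from Treschev's ${\bf O}_2$.
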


Recall that $\nu(0)=0$. Nevertheless, one can easily see that the function $ 
B^{\eta,\sigma}$ is well defined even as $\nu\rightarrow 0$ since it has a well 
defined limit.

This theorem is proven in Section \ref{sec:flowboxcoordinates:DR}.

\begin{remark}
Vanishing the splitting 
potentials $\Theta^\sigma$ in the formula of the separatrix map in the resonant 
zones one has the inner dynamics. Its non-integrability is given by the 
functions $B^{z,\sigma}$. The splitting potentials $\Theta^\sigma$ encode the 
outer dynamics as in the non-resonant setting (see  Remark 
\ref{rmk:InnerAndOuter}).
\end{remark}

We compare this Theorem with Theorem \ref{thm:Treschev} and 
the remark afterward. Notice that both the inner and the outer
dynamics are nontrivial and $\eps$-coupled due to resonant terms.
Since transition time of the separatrix map is $\mathcal O(\log \eps)$
this gives a term of order $\mathcal O(\eps \log \eps)$ in
the action components, which dominates the contribution of the 
Melnikov function. 

Note that in the aforementioned $\eps$-coupling between the inner 
and the outer dynamics vanishes in the generalized Arnold example 
and the Melnikov function gives dominant contribution 
(see Theorem \ref{thm:SM:DR:Arnold}). 


\subsection{Main steps of the proof and structure of the paper}

In this section we sketch the derivation of the separatrix map done 
by  Treschev \cite{Treschev02a}. At each step we refer to  the
section where it is done in this paper. Recall that the separatrix 
map $\SM_\eps$ is a return map to a certain fundamental 
region ${\bf U^-\cup U^+}$ (see Fig. \ref{fig:figure-eight}).  
Its computation consists of six steps: 
\begin{itemize}
\item Moser's normal form for {\it the unperturbed} pendulum 
near the separatrices (see the blue and the yellow part on 
Figure \ref{fig:gluing-maps}, 
left and Section \ref{sec:NormalForms}).

\item Moser's normal form for {\it the perturbed} pendulum in the 
colored part of Figure \ref{fig:gluing-maps}, left  (see Section
\ref{sec:NormalForms}).

\item The transition map from one yellow region to another one 
in the variables given by Moser's normal  form of the unperturbed system 
(see Section \ref{sec:flowestimates}).

\item The regions ${\bf U^+}$ (resp. ${\bf U^-}$) on 
Figure \ref{fig:gluing-maps}, left are overlapping regions in 
the original coordinates (see Figure \ref{fig:gluing-maps}, right).


\item Compute the  gluing maps (Section \ref{sec:gluingmaps}).

\item Computation of the composition of the transition map
in Moser's normal form variables with the  gluing map (Section
\ref{sec:flowboxcoordinates:SR}).
\end{itemize}

\begin{figure}
\qquad \qquad \qquad 
\includegraphics[width=6.4cm]{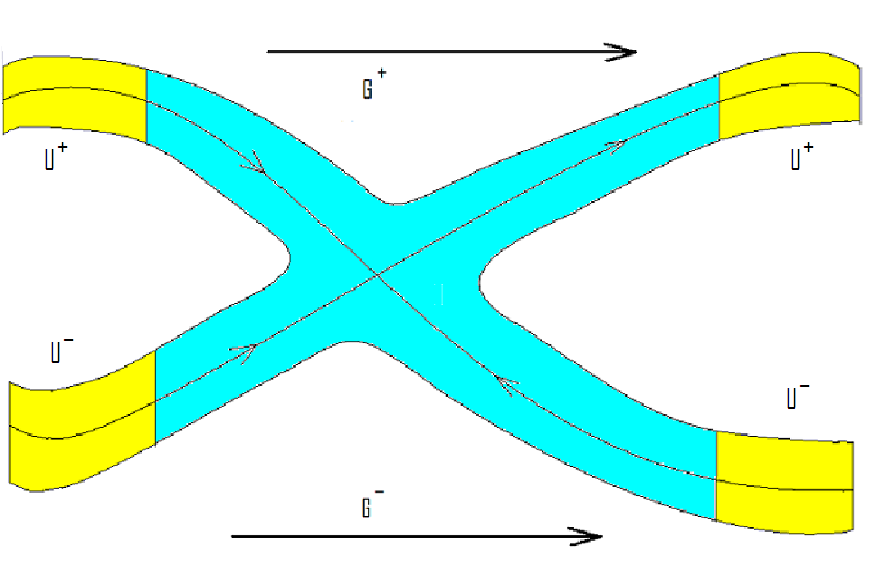}\qquad \ \ \ 
\includegraphics[width=4.2cm]{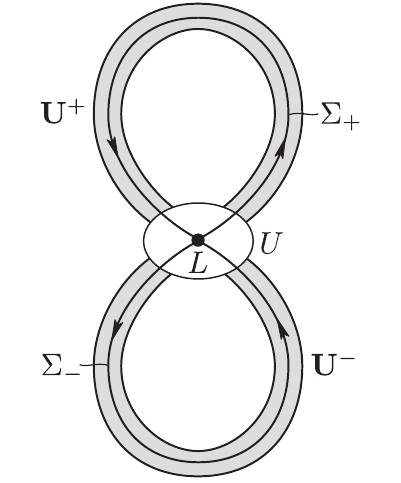}
\caption{Gluing maps.}
\label{fig:gluing-maps}
\end{figure}

\section{Normal forms}\label{sec:NormalForms}
\subsection{Moser normal form  close to the torus}\label{sec:Moser}

We start, as in \cite{Treschev02a}, by performing the classical Moser normal form \cite{Moser56} to the unperturbed 
Hamiltonian $H_0$. To obtain a finitely smooth version of 
this result we apply a result in \cite{BanyagaLW}.

\begin{lemma}\label{lemma:Moser}
Let $H_0$ is $\CCC^r$ satisfying \textbf{[H1$'$]} and \textbf{[H2]}.
For $I\in\D$ and $(p,q)$ 
close to $(0,0)$ there exists a system 
of coordinates $(I,s,x,y)$ such that
\begin{itemize}
\item the change $\FF_0:(I,s,x,y)\longrightarrow (I,\varphi,p,q)$ 
$\CCC^\ell$ smooth and symplectic with $\ell \ge (r-4)/5$.

\item in the variables $H_0\circ \FF_0=\HH_0(I,xy)$ 
is  $\CCC^{\ell+1}$ smooth. We write it as
\[
 \HH_0(I,xy)=E(I)+g(I,xy)
\]
where $g(I,xy)=\la(I)xy+\OO_2(xy)$. 
\end{itemize}
\end{lemma}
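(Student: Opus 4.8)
The plan is to reduce the statement to the classical Moser normal form near a hyperbolic equilibrium and to track carefully the loss of derivatives, replacing the (here unavailable) analyticity of $H_0$ in $(p,q)$ by a finitely smooth substitute. The construction proceeds in three changes of coordinates, all fixing the $I$-component.

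\emph{Step 1 (linear normalization).} By \textbf{H2} the matrix $\La(I)$ of \eqref{def:MatrixLambda} has simple real eigenvalues $\pm\la(I)$, so $\la$ and the normalized eigenvectors $a_\pm$ of Section \ref{sec:auxiliaryfunctions} depend on $I$ as smoothly as the entries of $\La$, hence are $\CCC^{r-2}$ on $\ol\D$. Using $a_\pm$ normalized so that the corresponding $2\times2$ matrix has determinant one, one builds a change linear in $(p,q)$ and $\CCC^{r-2}$ in $I$; after a compensating correction in the angle $\varphi$ (forced by the $I$-dependence, so as to keep $dI\wedge d\varphi+dp\wedge dq$ invariant) this change is symplectic and brings $H_0$ to $E(I)+\la(I)\,xy+\OO_3(x,y)$, with the axes $\{x=0\}$, $\{y=0\}$ tangent to the local stable and unstable manifolds of the saddle.

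\emph{Step 2 (straightening the separatrices).} A finitely smooth stable/unstable manifold theorem, applied uniformly on the compact $\ol\D$, shows that $W^{s,u}(I)$ are finitely smooth graphs over the axes, and a further tangent-to-the-identity symplectic change carries the axes onto $W^u\cup W^s$ at the cost of a bounded loss of derivatives. Since $W^u\cup W^s\subset\{H_0=E(I)\}$, afterwards $H_0-E(I)$ vanishes on both axes, whence by Hadamard's lemma $H_0-E(I)=xy\,G(I,x,y)$ with $G(I,0,0)=\la(I)>0$.

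\emph{Step 3 (Moser normalization and bookkeeping).} Here I invoke \cite{BanyagaLW} on cohomology equations near hyperbolic fixed points to produce a further symplectic change, preserving the two axes, after which $H_0$ depends on $(x,y)$ only through $u=xy$. Composing the changes of Steps 1--3 defines $\FF_0$; the quantitative version of \cite{BanyagaLW}, which loses a fixed fraction of the available derivatives in solving the homological equation, yields $\ell\ge (r-4)/5$. Writing $H_0\circ\FF_0=\HH_0(I,u)=E(I)+g(I,u)$, the expansion $g(I,u)=\la(I)u+\OO_2(u)$ is immediate from $g(I,0)=0$ and $\pa_u g(I,0)=\la(I)$; the extra degree of smoothness of $\HH_0$ (that it is $\CCC^{\ell+1}$, not merely $\CCC^\ell$) comes from the structure of the normalization, since the normal-form part is produced by the averaging/projection step of the iteration, which does not lose derivatives, whereas only the conjugacy $\FF_0$ accumulates the loss. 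The main obstacle I expect is exactly this regularity accounting in Step 3: making the parameter-dependent finitely smooth Moser normalization quantitative with the precise loss $(r-4)/5$, arranging that the $(p,q)$-domain around $(0,0)$ can be chosen uniform in $I\in\ol\D$, keeping every intermediate change genuinely symplectic (so one should work with generating functions or Hamiltonian flows rather than arbitrary diffeomorphisms), and verifying that the one-derivative gain for $\HH_0$ survives all the compositions. Steps 1 and 2 are routine by comparison.
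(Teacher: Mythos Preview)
Your proposal is correct in outline but takes a genuinely different route from the paper. The paper's proof is very short: it invokes Theorem~1.1 of \cite{BanyagaLW} directly as a black-box Sternberg-type conjugacy theorem for \emph{maps}. After using Remark~\ref{rmk:saddle} to place the saddle at the origin (losing two derivatives), for each fixed $I$ one has a $2$-dimensional symplectic diffeomorphism with a hyperbolic fixed point; the cited theorem produces a $\CCC^\ell$ conjugacy to its polynomial Birkhoff normal form $N$, which for an area-preserving saddle depends only on $xy$. The quantitative bound on $\ell$ is read off from the constants $A=1/4$, $B=1/2$ in the remark following their Theorem~1.1. Since $H_0$ is autonomous in $(p,q)$, its level curves are orbits, and once the orbits are the sets $\{xy=\text{const}\}$ the Hamiltonian is forced to be a function of $xy$.

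Your approach instead builds the normal form by hand in three stages (linearize, straighten separatrices, Moser-average), using \cite{BanyagaLW} only as a tool for cohomology equations in Step~3. This is closer in spirit to Moser's original argument and is perfectly viable, but it obliges you to track the derivative loss across all three steps yourself, and it will not naturally produce the particular bound $\ell\ge(r-4)/5$: that number is tied to the Sternberg constants in the conjugacy theorem, not to a Moser-type iteration. The paper's route buys brevity and a clean derivative count by outsourcing the hard analysis; yours buys transparency about the geometric content of each step at the cost of the bookkeeping you correctly flag as the main obstacle. Note also the methodological difference: the paper works at the level of maps (conjugating the flow/time-one map and then reading off the Hamiltonian normal form from the orbit structure), whereas you work throughout at the level of the Hamiltonian.
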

\begin{proof}
In Lemma 1,\cite{Treschev02a} the assumption is that $H_0$ is analytic 
in $p,q$. To relax this assumption we use Theorem 1.1 \cite{BanyagaLW}.
Recall that by Remark \ref{rmk:saddle} we can assume that the saddle is at
$(p,q)=(0,0)$ for all $I$ at the expense of loosing two derivatives.

Let $f$ be a $\CCC^{r-2}$ diffeomorphism with the fixed point at the origin 
$f(0)=0$. Let $N$ be a symplectic polynomial map such that $N(0)=0$
and the $k$-jet of $f$ and $N$ coincide at $0$, i.e. 
$D^jf(0)=D^jN(0),\ j=0,\dots,k$. Let $1\le \ell<kA-B$ and $r-2>2k+4$ 
for some integer $\ell$. Then $N$ and $f$ are $\CCC^\ell$ conjugate, i.e. 
there is a $\CCC^\ell$ diffeomorphism  $h$ such that $h^{-1}\circ f \circ h=N$
near the origin, $h(0)=0$.

In our case for each fixed $I$ we have a $2$-dimensional 
symplectic map $f$ with a saddle fixed point. By the remark after Theorem 1.1
of \cite{BanyagaLW}, 
$A=1/4, \ B=1-2A=1/2$. Thus, $1\le \ell < \frac{k-2}{4}<\frac{r-10}{8}$.
\end{proof}

We consider the expansion in $\eps$ of the perturbation of 
the Hamiltonian \eqref{def:HamiltonianOriginal}, namely, 
\[
H_1=H_{11}+\eps H_{12}+\OO(\eps^2)
\]
and define
\begin{equation}\label{def:H1InMoser}
\HH_\eps=\HH_0+\eps
\HH_1+\eps^2\HH_2+\OO(\eps^3)
\end{equation}
with
\[\HH_1(I,s,x,y,t)=H_{11}\circ (\FF_0(I,s,x,y),t), \quad
\HH_{2}(I,s,x,y,t)=H_{12}\circ
(\FF_0(I,s,x,y),t).
\]
Since we assume that $r/8-5/4\geq s$ we have that $\HH^0$ is $\CCC^{s+1}$,
$\HH_1$ is $\CCC^s$ and $\HH_2$ is $\CCC^{s-1}$. Moreover
$\OO(\eps^3)=\OO_{\CCC^{s-2}}(\eps^3)$.

\subsection{Normal forms near the separatrices}\label{sec:normalform}
We extend the Moser normal form to the region 
where $|xy|$ is small (see the shaded/colored region 
Fig. \ref{fig:gluing-maps} on the right/left respectively). 
This normal form applies to both the non-resonant and 
resonant regimes.  Since we want to make a second order 
analysis of the separatrix map we need a more precise 
normal form than in \cite{Treschev02a}. In this normal form
there are two  sources of error.
\begin{itemize}
 \item {\it Expansion in the small parameter $\eps$}: we need 
to perform two steps of normal form instead of one to  
reduce the size of the remainders.
\item {\it Powers of $xy$}: Treschev only performs normal form to 
remove the terms in the perturbation which are independent of 
the product $xy$. He takes
$$\eps^{5/4}|\log\eps|\lesssim|xy|\lesssim \eps^{7/8}.$$ 
We want to remove terms up to the first order in $xy$. Assume that 
\begin{equation}\label{def:sizepq}
\eps^{1+a}\lesssim|xy|\lesssim \eps\qquad\text{with }1\ge  a\geq 0.
\end{equation}
\end{itemize}

We perform the change of coordinates by the Lie Method. First, 
we proceed formally and then we compute the estimates. We consider the expansion of 
the Hamiltonian $\HH$ given in \eqref{def:H1InMoser} and of 
a Hamiltonian of the form $\eps W=\eps W_0+\eps^2 W_1$. Call
$\Phi$ the time-one map associated to the flow of 
$\eps W$. This change is symplectic. Moreover,
\[
\begin{split}
 \HH_\eps\circ \Phi=&\HH_\eps+\eps
\{\HH,W\}+\eps^2\{\{\HH,W\},W\}+\OO\left(\eps^3\right)\\
=&\HH_0+\eps\left(\HH_1+
\{\HH_0,W_0\}\right)\\
&+\eps^2\left(\{\HH_0,W_1\}+\{\HH_1,W_0\}+\{\{\HH_0,W_0\},W_0\}
+\HH_2\right) +\OO\left(\eps^3\right).
\end{split} 
 \]
Now we look for suitable $W_0$ and $W_1$.  


First, compute $W_0$ and then $W_1$.
To  compute $W_0$,  split the Hamiltonian $\HH_1$,
 defined in \eqref{def:H1InMoser}, in the following way 
\[
\begin{split}
 \HH_1(I,s,x,y,t)=&\,\ol {\bf H}_1(I,s,t) +
\HH^{(1)}(I,s,t)+\HH^{(2)}(I,s,y,t)+\HH^{(3)}(I,s,x,t)\\
+ &xy \left(\ol{ \bf H}_2(I,s,t)+ \HH^{(4)}(I,s,t)
+ \HH^{(5)}(I,s,y,t)+  \HH^{(6)}(I,s,y,t)\right)\\+&\OO^*_2(xy)
 \end{split}
\]
where 
\begin{equation} \label{eq:boldface-H2}
 \ol{ \bf H}_2(I,s,t)=\sum_{k\in\ZZ^{n+1}}\psi\left(\frac{k\cdot
(\nu(I),1)}{\beta}\right)\pa_{xy}H_1^{k}(I,0,0)
e^{2\pi i k\cdot (s,t)},
\end{equation}
$\psi$ is the bump function introduced before (\ref{eq:boldfaceH})
and 
\begin{equation}\label{def:HamiltonianSplitting}
\begin{split}
\HH_1^{(1)}(I,s,t)&=\HH_1(I,s,0,0,t)-\ol {\bf H}_1(I,s,t)\\
\HH_1^{(2)}(I,s,t)&=\HH_1(I,s,x,0,t)-\HH_1(I,s,0,0,t)\\
\HH_1^{(3)}(I,s,t)&=\HH_1(I,s,0,y,t)-\HH_1(I,s,0,0,t)\\
\HH_1^{(4)}(I,s,t)&=\pa_{xy}\HH_1(I,s,0,0,t)-
\ol {\bf H}_2(I,s,t)\\
\HH_1^{(5)}(I,s,t)&= \pa_{xy}\HH_1(I,s,x,0,t)-\pa_{xy}\HH_1(I,s,0,0,t)\\
\HH_1^{(6)}(I,s,t)&=
\pa_{xy}\HH_1(I,s,0,y,t)-\pa_{xy}\HH_1(I,s,0,0,t),\\
\end{split}
\end{equation}
where $\ol {\bf H}_1$ is defined in (\ref{eq:boldfaceH}). The functions $\ol
{\bf H}_1$, $\HH_1^{(j)}$, $j=1,2,3$ are $\CCC^s$ whereas the functions
$\HH_1^{(j)}$, $j=4,5,6$ are $\CCC^{s-2}$.

The next lemma contains the first step of the normal form. In the next two 
lemmas we denote by $\{\cdot,\cdot\}_{(x,y)}$, the Poisson bracket with 
respect to the conjugate variables $(x,y)$. 

\

\begin{lemma}
\label{lemma:NormalForm:FirstOrderCohomological}
There exists a $\CCC^{s-2}$ smooth 
solution 
$W_0(I,s,x,y,t)$ 
of the equation 
\[
 \left(\nu(I)+\pa_I g(I,xy)\right)\pa_s W_0+\pa_t W_0+
\{g(I,xy), W_0\}_{(x,y)}+\sum_{j=1}^3\HH_1^{(j)}+
xy \sum_{j=4}^6\HH_1^{(j)}=0.
\]
The functions $W_0$ satisfies
\[
 W_0=\OO^*(\beta\ii),
\]
where $\OO^*$ is defined in (\ref{anisotropic-norm}).
\end{lemma}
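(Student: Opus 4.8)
The plan is to solve the linear PDE for $W_0$ by passing to Fourier series in the angular/time variables $(s,t)$ and treating the $(x,y)$ product as a parameter. The key observation is that the operator $\mathcal{L} := (\nu(I)+\pa_I g(I,xy))\pa_s + \pa_t + \{g(I,xy),\cdot\}_{(x,y)}$ acts diagonally in the Fourier basis $e^{2\pi i k\cdot(s,t)}$ on monomials $x^a y^b$: since $\{g(I,xy),x^ay^b\}_{(x,y)} = \pa_{xy}g\cdot(b-a)\, x^ay^b$, a term $c^{k}_{a,b}(I)\,x^ay^b e^{2\pi i k\cdot(s,t)}$ is mapped to $\big(2\pi i(\nu(I),1)\cdot k + (b-a)\pa_{xy}g(I,xy)\big)c^k_{a,b}(I)\,x^ay^be^{2\pi i k\cdot(s,t)}$. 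So the cohomological equation is solved term by term, dividing by this symbol. For the zeroth and first order in $xy$ (i.e. $a=b=0$ and $a=b=1$), which is all that appears in the inhomogeneity $\sum_{j=1}^3\HH_1^{(j)}+xy\sum_{j=4}^6\HH_1^{(j)}$ once one expands $\HH_1^{(2)},\HH_1^{(3)},\HH_1^{(5)},\HH_1^{(6)}$ further into their $x$- and $y$-monomials, we have $b-a=0$ only when the monomial is balanced; for the genuinely $x$-only or $y$-only pieces $b-a=\pm(\text{positive integer})$.

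First I would record why the small divisors are bounded below. For the balanced terms ($a=b$) the symbol reduces to $2\pi i(\nu(I),1)\cdot k$, and by construction the inhomogeneity has had its resonant part $\ol{\bf H}_1$ (and $xy\,\ol{\bf H}_2$) subtracted off, so the surviving $k$'s satisfy $|(\nu(I),1)\cdot k|\ge \beta/2$ via the support property of $\psi$ in \eqref{eq:boldfaceH} and \eqref{eq:boldface-H2}; hence the divisor is $\gtrsim\beta$ and $W_0=\OO^*(\beta\ii)$ on those terms. For the unbalanced terms, $|{\rm Re}$ or ${\rm Im}$ of the symbol$|$ is dominated by $|b-a|\,|\pa_{xy}g(I,xy)|\ge |\la(I)|/2 - \OO(xy)\gtrsim 1$ using $g=\la xy+\OO_2(xy)$ and $\la>0$ bounded below by Hypothesis {\bf H2}; so those contributions are even $\OO^*(1)$, better than claimed. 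Because $\NNN(H_1)$ is finite (finite harmonics setting) the Fourier sum over $k$ is finite, so no convergence issue arises and the estimates are uniform.

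Next I would address the regularity count: $W_0$ is built from dividing the Fourier-times-monomial coefficients of the inhomogeneity by smooth non-vanishing symbols, and the anisotropic $\|\cdot\|^*_r$-estimate $\pa\ii f=\OO^*(\beta\ii f)$ quoted after \eqref{def:OperatorPartial} is exactly the mechanism for the balanced/resonant part; the only loss comes from the inhomogeneity itself — the worst pieces $\HH_1^{(4)},\HH_1^{(5)},\HH_1^{(6)}$ involve $\pa_{xy}\HH_1$, hence are $\CCC^{s-2}$ as noted right before the lemma — so $W_0\in\CCC^{s-2}$. I would then simply verify by substitution that the constructed $W_0$ solves the stated PDE. \textbf{The main obstacle} is bookkeeping rather than conceptual: one must expand the $x$-dependent and $y$-dependent error pieces $\HH_1^{(2)},\HH_1^{(3)},\HH_1^{(5)},\HH_1^{(6)}$ into their full $x^ay^b$-monomial series, check that only finitely many monomials with controlled divisors occur, and keep the anisotropic norms (the factor $\beta$ attached to $I$-derivatives) tracked correctly through the division by symbols that themselves depend on $I$ through $\nu(I)$ and $\pa_{xy}g(I,xy)$ — differentiating the divisor in $I$ is where the factors of $\beta\ii$ pile up, and one must confirm they do not exceed the single power of $\beta\ii$ claimed, which works precisely because $\|\cdot\|^*$ weights each $I$-derivative by $\beta$.
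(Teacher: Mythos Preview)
Your Fourier approach is correct for the pieces $\HH_1^{(1)}$ and $xy\,\HH_1^{(4)}$, and this is exactly what the paper does: since $\{g,\cdot\}_{(x,y)}$ annihilates functions of $(I,s,t)$ alone, one is left inverting $\wt\pa=(\nu+\pa_I g)\pa_s+\pa_t$ on non-resonant harmonics, giving the $\OO^*(\beta\ii)$ bound.

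The gap is in the unbalanced pieces $\HH_1^{(2)},\HH_1^{(3)},\HH_1^{(5)},\HH_1^{(6)}$. Your plan to expand these into monomials $x^ay^b$ and divide by the symbol does not go through here, for two reasons. First, under hypothesis \textbf{H1$'$} the Hamiltonian is only $\CCC^s$ and the Moser change of Lemma~\ref{lemma:Moser} is only $\CCC^\ell$, so $\HH_1$ has no convergent Taylor series in $(x,y)$; your claim that ``only finitely many monomials \ldots\ occur'' is false --- the finite-harmonics assumption concerns $(\varphi,t)$, not $(p,q)$. Second, even formally, $\mathcal L$ is not diagonal on monomials: the coefficients $\pa_I g(I,xy)$ and $\pa_r g(I,xy)$ themselves depend on $xy$, so $\mathcal L(x^ay^b e^{2\pi i k\cdot(s,t)})$ spills into higher $xy$-degree, and you would have to invert a triangular infinite system rather than a scalar symbol.

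The paper handles these pieces by the method of characteristics: for instance
\[
W_0^{(3)}=-\int_0^{+\infty}\HH_1^{(3)}\bigl(I,\,s+(\nu+\pa_I g)t',\,xe^{-\pa_2 g\,t'},\,t+t'\bigr)\,dt',
\]
and similarly for $j=2,5,6$ with the appropriate half-line. These integrals converge exponentially because $\HH_1^{(2)},\HH_1^{(3)}$ vanish at $x=0$ (resp.\ $y=0$) and $\pa_2 g\ge\la/2>0$; they are manifestly $\CCC^{s-2}$ and require no monomial expansion. Your correct observation that the ``symbol'' for unbalanced terms has real part $\gtrsim\la$ is exactly the mechanism making these characteristic integrals converge with an $\OO^*(1)$ bound, so your intuition is right but the implementation must go through the integral formulas, not term-by-term division.
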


\begin{proof}
 We take $W_0=\sum_{j=1}^6W_0^{(j)}$ and solve 
the equations 
 \[
 \begin{split}
 \left(\nu(I)+\pa_I g(I,xy)\right)\pa_s W_0^{j}+\pa_t W_0^{j}+\{g(I,xy),
W_0^{j}\}_{(x,y)}+\HH_1^{(j)}&=0,\ j=1,2,3\ \\
 \left(\nu(I)+\pa_I g(I,xy)\right)\pa_s W_0^{j}+\pa_t W_0^{j}+\{g(I,xy),
W_0^{j}\}_{(x,y)}+xy\HH_1^{(j)}&=0,\ j=4,5,6.
\end{split}
\]
Each equation is solved as follows. For the first and fourth 
ones, we just have 
\[
 \begin{split}
 \left(\nu(I)+\pa_I g(I,xy)\right)\pa_s W_0^{1}+\pa_t
W_0^{1}+\HH_1^{(1)}&=0\\
 \left(\nu(I)+\pa_I g(I,xy)\right)\pa_s W_0^{4}+\pa_t
W_0^{4}+xy\HH_1^{(4)}&=0.
\end{split}
\]
Thus, we invert the operator 
\begin{equation}\label{def:OperatorPartialTilde}
\wt\pa:=(\nu(I)+\pa_I g(I,xy))\pa_s
+\pa_t
\end{equation}
by using the Fourier expansion and inverting for each Fourier
coefficient. Note that this operator and the operator $\pa$ 
in \eqref{def:OperatorPartial} satisfy $\wt\pa-\pa=\OO(xy)$. Moreover, recall
that $\HH_0$ is $\CCC^{s+1}$ and therefore so is $g$. Then, $W_0^{1,4}$ are
$\CCC^{s-2}$
and satisfy
$$W_0^{1}=\pa\ii \HH_1^{(1)}=\OO^*(\beta\ii)
\ \text{ and }\ W_0^{4}=\OO^*(xy\beta\ii).
$$

For the others, we use the characteristics method to obtain 
\[
\begin{split}
 W_0^2&=-\int_{-\infty}^0 \HH_1^{(2)}(I,s+
(\nu(I)+\pa_I g(I,xy))t', ye^{\pa_2 g(I,xy) t'}, t+t')\,dt'\\
W_0^3&=-\int^{+\infty}_0 \HH_1^{(3)}(y,s+
(\nu(I)+\pa_I g(I,xy))t', xe^{-\pa_2 g(I,xy) t'}, t+t')\,dt'\\
W_0^5&=-xy\int_{-\infty}^0 \HH_1^{(5)}(I,s+(\nu(I)+\pa_I g(I,xy))t', ye^{\pa_2 
g(I,xy) t'}, t+t')\,dt'\\
W_0^6&=xy\int^{+\infty}_0 
\HH_1^{(6)}(I,s+(\nu(I)+\pa_I g(I,xy)) t', xe^{-\pa_2 g(I,xy) t'}, t+t')\,dt'.
\end{split}
\]
Thus, they are all $\CCC^{s-2}$.
\end{proof}
Now we solve the second order equation. Define 
\[
\wt \HH_2= \{\HH_1,W_0\}+\{\{\HH_0,W_0\},W_0\}
+\HH_2,
\]
where $\HH_2$ is Hamiltonian defined in \eqref{def:H1InMoser}. Using the equation for $W_0$, given in Lemma
\ref{lemma:NormalForm:FirstOrderCohomological}, 
one has that $\wt \HH_2$ is $\CCC^{s-3}$.

Let $\wt\HH_2^{k}(I,0,0),\ k\in \ZZ^{n+1}$ denote the Fourier 
coefficients of $\HH_2$ in $s$ and $t$. We split $\wt \HH_2$ 
in several terms, as done for $\HH^1$, in the following way 
\[
\begin{split}
\wt \HH_2(I,s,x,y,t)=&\,\ol {\bf H}_3(I,s,t) +
\HH_2^{(1)}(I,s,t)+\HH_2^{(2)}(I,s,y,t)+
\HH_2^{(3)}(I,s,x,t)\\
&+ \OO^*(xy)
 \end{split}
\]
with
\begin{equation} \label{eq:boldface-H3}
 \ol{ \bf H}_3(I,s,t)=\sum_{k\in\ZZ^{n+1}}
\psi\left(\frac{k\cdot
(\nu(I),1)}{\beta}\right)\wt\HH_2^{k}(I,0,0)e^{2\pi i k\cdot
(s,t)}
\end{equation}
and 
\[
\begin{split}
\HH_2^{(1)}(I,s,t)&=\wt\HH_2(I,s,0,0,t)-\ol {\bf H}_3(I,s,t)\\
\HH_2^{(2)}(I,s,t)&=\wt\HH_2(I,s,x,0,t)-\wt\HH_2(I,s,0,0,t)\\
\HH_2^{(3)}(I,s,t)&=\wt\HH_2(I,s,0,y,t)-\wt\HH_2(I,s,0,0,t).
\end{split}
\]
All these terms are $\CCC^{s-3}$.

\

\begin{lemma}
\label{lemma:NormalForm:SecondOrderCohomological}
There exists a $\CCC^{s-3}$ smooth 
solution $W_1(I,s,x,y,t)$
of the equation 
\[
 \left(\nu(I)+\pa_I g(I,xy)\right)\pa_s W_1
+\pa_t W_1+\{g(I,xy), W_1\}_{(x,y)}+
\sum_{j=1}^3\HH_2^{(j)}=0.
\]
Moreover, $W_1=\OO^*(\beta^{-3})$, where $\OO^*$ 
is defined in (\ref{anisotropic-norm}).
\end{lemma}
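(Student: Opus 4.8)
The plan is to proceed exactly as in the proof of Lemma \ref{lemma:NormalForm:FirstOrderCohomological}, but now working with the already-split $\wt\HH_2$. First I would write $W_1=W_1^{(1)}+W_1^{(2)}+W_1^{(3)}$ and split the cohomological equation into the three scalar equations
\[
\left(\nu(I)+\pa_I g(I,xy)\right)\pa_s W_1^{(j)}+\pa_t W_1^{(j)}+\{g(I,xy),W_1^{(j)}\}_{(x,y)}+\HH_2^{(j)}=0,\qquad j=1,2,3.
\]
For $j=1$ the term $\HH_2^{(1)}$ is independent of $(x,y)$, so the Poisson bracket term vanishes and the equation reduces to $\wt\pa\, W_1^{(1)}=-\HH_2^{(1)}$, with $\wt\pa$ the operator \eqref{def:OperatorPartialTilde}. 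Since $\HH_2^{(1)}\in\mathrm{Res}$ by construction (the resonant harmonics with $|k\cdot(\nu(I),1)|\le\beta/2$ were subtracted off into $\ol{\bf H}_3$ via the bump function $\psi$), I invert $\wt\pa$ Fourier mode by Fourier mode. The small divisor for the mode $k$ is $k\cdot(\nu(I),1)+\OO(xy)$, bounded below by $\beta/2$ on the relevant region; each inversion costs one factor of $\beta^{-1}$ in the $\OO^*$-norm. Since $\wt\HH_2$ is only $\CCC^{s-3}$ and dividing its Fourier coefficients by the divisor does not lose regularity (the divisor is $\CCC^{s-3}$ and bounded away from zero), $W_1^{(1)}$ is $\CCC^{s-3}$.

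For $j=2,3$ the term depends on $y$ (resp. $x$) only, and I would solve by the method of characteristics as in the previous lemma: integrate $\HH_2^{(2)}$ along the flow of the unperturbed vector field $(\dot s,\dot x,\dot y,\dot t)=(\nu+\pa_I g,\,-\pa_2 g\, x,\,\pa_2 g\, y,\,1)$ backward in time, and symmetrically forward in time for $\HH_2^{(3)}$, using that these pieces decay exponentially (they vanish at $x=0$, resp. $y=0$, and the contracting/expanding directions give the exponential factor $e^{\pm\pa_2 g\, t'}$). Concretely,
\[
W_1^{(2)}=-\int_{-\infty}^0 \HH_2^{(2)}\!\left(I,\,s+(\nu(I)+\pa_I g(I,xy))t',\,ye^{\pa_2 g(I,xy)t'},\,t+t'\right)dt',
\]
and analogously $W_1^{(3)}$ with the forward integral. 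These integrals converge, are $\CCC^{s-3}$, and each contributes $\OO^*(\beta^{-1})$ after one further inversion in $s,t$ of the non-resonant part that survives in the integrand. Assembling the three pieces gives a $\CCC^{s-3}$ solution $W_1$.

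The size estimate $W_1=\OO^*(\beta^{-3})$ comes from tracking the $\beta$-powers through the construction of $\wt\HH_2$: by Lemma \ref{lemma:NormalForm:FirstOrderCohomological} we have $W_0=\OO^*(\beta^{-1})$, so $\{\HH_1,W_0\}=\OO^*(\beta^{-1})$ but $\{\{\HH_0,W_0\},W_0\}=\OO^*(\beta^{-2})$ (two brackets against $W_0$), whence $\wt\HH_2=\OO^*(\beta^{-2})$; one more inversion in solving for $W_1$ yields $\OO^*(\beta^{-3})$. The main subtlety — and the step I expect to require the most care — is bookkeeping the anisotropic $\CCC^{s-3}$ norms $\|\cdot\|^*_{s-3}$ under the characteristics integration: one must check that differentiating under the integral sign in $I$ (which brings down factors of $t'$ through $\pa_I g$ and $\pa_I\nu$) is still absorbed by the exponential decay $e^{-|\pa_2 g|\,|t'|}$ coming from $\HH_2^{(2,3)}$ vanishing on the coordinate axes, so that all $I$-derivatives up to order $s-3$ stay finite and the $\beta$-weights track correctly. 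This is entirely parallel to the corresponding estimate in Lemma \ref{lemma:NormalForm:FirstOrderCohomological}, only with one more layer of $\beta$-loss and one fewer derivative of regularity.
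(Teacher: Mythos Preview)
Your proposal is correct and follows essentially the same approach as the paper: split $W_1=\sum_{j=1}^3 W_1^{(j)}$, invert $\wt\pa$ Fourier-wise for $j=1$, use characteristics for $j=2,3$, and track $\wt\HH_2=\OO^*(\beta^{-2})\Rightarrow W_1=\OO^*(\beta^{-3})$. One small slip: for $j=2,3$ there is no ``further inversion in $s,t$''---the characteristics integral alone defines $W_1^{(2)},W_1^{(3)}$, and these pieces inherit the $\OO^*(\beta^{-2})$ size directly from $\HH_2^{(2)},\HH_2^{(3)}$ (the extra $\beta^{-1}$ comes only from the small-divisor inversion in $W_1^{(1)}$, which is why the paper remarks that sharper bounds are available for the individual pieces).
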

\begin{proof}
As in the proof of Lemma \ref{lemma:NormalForm:FirstOrderCohomological}, 
we take $W_1=\sum_{j=1}^3W_1^{(j)}$ and solve the equations 
 \[
 \left(\nu(I)+\pa_I g(I,xy)\right)\pa_s W_1^{j}+\pa_t W_1^{j}+\{g(I,xy),
W_1^{j}\}_{(x,y)}+\HH_2^{(j)}=0,\quad j=1,2,3.
\]
Each equation is solved in the same way as the first order in Lemma
\ref{lemma:NormalForm:FirstOrderCohomological}. One can see that $\wt \HH_2$
satisfies $\wt \HH_2=\OO^*(\beta^{-2})$. This implies that 
$W_1=\OO^*(\beta^{-3})$ (one can have more precise bounds for
each $W_1^{j}$).
\end{proof}

Denote by 
\[
\Phi: (\wh I,\wh s,\wh x,\wh y,\wh t)\ \longmapsto \ (I,s,x,y,t) 
\]
the time-one map associated to the flow of the Hamiltonian
$\eps W=\eps W_0+\eps^2 W_1$. This change is $\CCC^{s-4}$ and symplectic. In the
next two lemmas
we analyze the change of coordinates and the transformed Hamiltonian. 

\

\begin{lemma}\label{lemma:NormalFormChangeOfCoordinates}
 The change $\Phi$ is $\CCC^{s-4}$ and satisfies the equations
 \[
 \begin{split}
  I =&\, \wh I+\eps M_1^I+\eps^2  M_2^I+
\OO^*\left(\eps^{3}\beta^{-5}\right)\\
=& \, \wh I+\eps \pa_s W_0+\eps^2
\left(\pa_s W_1+\{\pa_s  W_0,W_0\}\right)
+\OO^*\left(\eps^{3}\beta^{-5}\right)\\
  s =& \, \wh s+\eps M_1^s-\eps^2 M_2^s
  +\OO^*\left(\eps^{3}\beta^{-6}\right)\\
 =& \,\wh s-\eps \pa_I W_0-\eps^2 \left(\pa_I W_1
+ \{\pa_I W_0,W_0\}\right)
+\OO^*\left(\eps^{3}\beta^{-6}\right)\\
  x =& \, \wh x+\eps M_1^x +\eps^2 M_2^x
  +\OO^*\left(\eps^{3}\beta^{-5}\right)\\
  =& \,\wh x+\eps \pa_y W_0+\eps^2 \left(\pa_y W_1
+\{\pa_y  W_0,W_0\}\right)
+\OO^*\left(\eps^{3}\beta^{-5}\right)\\
  y = &\, \wh y+\eps M_1^y
+\eps^2 M_2^y+\OO^*\left(\eps^{3}\beta^{-5}\right)\\
 =& \, \wh y-\eps \pa_x W_0
-\eps^2 \left(\pa_x W_1+\{\pa_x W_0,W_0\}\right)
-\OO^*\left(\eps^{3}\beta^{-5}\right)
 \end{split}
 \]
 Moreover, 
 \[
  M_1^z=\OO^*(\beta\ii),\,\,z=I,x,y \quad \text{ and }
M_1^s=\OO^*(\beta^{-2})
 \]
and 
 \[
  M_2^z=\OO^*(\beta^{-3}),\,\,z=I,x,y \quad \text{ and }
M_2^s=\OO^*(\beta^{-4}).
\]
We also have
\[
 xy=\wh x\wh y+\eps M_1^r+\eps^2 M_2^r+\OO^*\left(\eps^3\beta^{-5}\right),
\]
with
 \[
  M_1^r=\OO^*(\beta\ii),\quad \text{ and }
M_2^r=\OO^*(\beta^{-3}).
 \]

The inverse change is of the same form, that is 
 \[
 \begin{split}
  \wh I =& I-\eps M_1^I
  +\eps^2 \wt M_2^I+\OO^*\left(\eps^{3}\beta^{-5}\right)\\
  \wh s =& s-\eps M_1^s-\eps^2 \wt
M_2^s+\OO^*\left(\eps^{3}\beta^{-6}\right)\\
  \wh x =& x-\eps M_1^x+\eps^2 \wt
M_2^x+\OO^*\left(\eps^{3}\beta^{-5}\right)\\
  \wh y = &y-\eps M_1^y+\eps^2 \wt
M_2^y+\OO^*\left(\eps^{3}\beta^{-5}\right)\\
 \end{split}
 \]
 and 
 \[
 \wh x\wh y=xy-\eps M_1^r+\eps^2\wt  M_2^r+\OO^*(\beta^{-5}\eps^3).
\]
The terms $\wt M_2^z$  satisfy the same estimates as $M_2^z$.
\end{lemma}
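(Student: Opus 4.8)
The plan is to apply the standard Lie series expansion of the time-one map $\Phi$ of the Hamiltonian flow of $\eps W = \eps W_0 + \eps^2 W_1$ and track the regularity and the $\beta$-weighted sizes term by term. Recall that for any smooth observable $F$ one has $F \circ \Phi = F + \eps\{F, W\} + \tfrac{\eps^2}{2}\{\{F,W\},W\} + \OO(\eps^3 \cdot \|W\|^3)$; applying this with $F$ equal to each of the coordinate functions $I, s, x, y$ and collecting powers of $\eps$ gives, after inserting $W = W_0 + \eps W_1$, the expressions
\[
\begin{aligned}
I &= \wh I + \eps\,\pa_s W_0 + \eps^2\bigl(\pa_s W_1 + \tfrac12\{\pa_s W_0, W_0\}\bigr) + \OO(\eps^3),\\
s &= \wh s - \eps\,\pa_I W_0 - \eps^2\bigl(\pa_I W_1 + \tfrac12\{\pa_I W_0, W_0\}\bigr) + \OO(\eps^3),
\end{aligned}
\]
and analogously for $x,y$ (here I am slightly schematic about the factor $\tfrac12$ versus $1$; the exact combinatorial coefficient is fixed by the Lie series and is irrelevant for the estimates, so one absorbs it into the definition of $M_2^z$). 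This identifies $M_1^z = \pm\pa_\bullet W_0$ and $M_2^z = \pm(\pa_\bullet W_1 + \{\pa_\bullet W_0, W_0\})$ up to harmless constants, which is exactly the claimed form. The product $xy$ is handled by the same device applied to the function $F = xy$: $xy = \wh x \wh y + \eps\{xy, W_0\} + \eps^2(\cdots) + \OO(\eps^3)$, giving $M_1^r = \{xy, W_0\}_{(x,y)} = x\pa_x W_0 - y\pa_y W_0$ and a second-order term built from $W_1$ and $\{W_0,W_0\}$-type brackets.

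The regularity claim follows by bookkeeping: by Lemma \ref{lemma:NormalForm:FirstOrderCohomological}, $W_0 \in \CCC^{s-2}$, and by Lemma \ref{lemma:NormalForm:SecondOrderCohomological}, $W_1 \in \CCC^{s-3}$; the flow of a $\CCC^{s-2}$ Hamiltonian is $\CCC^{s-3}$ in the phase variables, but the leading loss comes from the $\eps^2$ term, which involves $\pa_\bullet W_1$ (one derivative of a $\CCC^{s-3}$ object, hence $\CCC^{s-4}$) and $\{\pa_\bullet W_0, W_0\}$ (again $\CCC^{s-4}$ after one more derivative hits $W_0 \in \CCC^{s-2}$). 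Thus $\Phi$ and $\Phi^{-1}$ are $\CCC^{s-4}$, as asserted. The $\beta$-weighted sizes are read off from $W_0 = \OO^*(\beta^{-1})$ and $W_1 = \OO^*(\beta^{-3})$ together with the fact that each $\pa_s$ derivative in the anisotropic norm costs a factor $\beta^{-1}$ (the variable $s$ plays the distinguished role of the fast angle, so $\pa_s$ acting on a function supported on frequencies $|k\cdot(\nu,1)| \sim \beta$ produces the extra $\beta^{-1}$), whereas $\pa_I, \pa_x, \pa_y$ are order $\OO^*(1)$. This yields $M_1^{I,x,y} = \OO^*(\beta^{-1})$, $M_1^s = \OO^*(\beta^{-2})$, $M_1^r = \OO^*(\beta^{-1})$ (since $x\pa_x - y\pa_y$ is an $\OO^*(1)$ operator), and at second order an extra $\beta^{-2}$ from either the $W_1$-size or the quadratic bracket structure, giving $M_2^{I,x,y,r} = \OO^*(\beta^{-3})$, $M_2^s = \OO^*(\beta^{-4})$; the remainder is $\OO(\eps^3)$ with a $W$-dependent constant of size $\OO^*(\beta^{-5})$ for the $I,x,y,r$ components and $\OO^*(\beta^{-6})$ for the $s$ component, matching the claim.

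For the inverse change $\Phi^{-1}$, the cleanest argument is to observe that $\Phi^{-1}$ is the time-one map of $-\eps W$, so it has the identical structure with $W_0 \mapsto -W_0$, $W_1 \mapsto -W_1$; this flips the sign of the first-order terms (hence the $-\eps M_1^z$ in the statement) while the second-order terms $\wt M_2^z$ are again of the form $\mp(\pa_\bullet W_1 + \{\pa_\bullet W_0,W_0\})$ with possibly different numerical coefficients, hence obey exactly the same $\OO^*(\beta^{-3})$ (resp. $\OO^*(\beta^{-4})$ for $s$) bounds and the same $\CCC^{s-4}$ regularity. The main obstacle — really the only place any care is needed — is the $\beta$-accounting for the composite $\eps^2$ terms: one must check that the Poisson bracket $\{\pa_s W_0, W_0\}_{(x,y)}$, which differentiates in $x$ and $y$ (the $\OO^*(1)$ directions) rather than in $s$, does not accumulate more than two powers of $\beta^{-1}$ beyond the two already present in $\pa_s W_0 \cdot W_0 = \OO^*(\beta^{-1})\cdot\OO^*(\beta^{-1})$, and symmetrically that the $s$-components pick up exactly one additional $\beta^{-1}$; this is where the anisotropic norm $\|\cdot\|^*_r = \|\cdot\|^{(\beta)}_r$ and the structure of $\wt\pa^{-1}$ from Lemma \ref{lemma:NormalForm:FirstOrderCohomological} do the work, and it is routine but must be done with the operator $\pa$ and its inverse kept explicitly in view.
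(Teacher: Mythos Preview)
Your approach is exactly the paper's: expand the time-one map $\Phi$ of $\eps W=\eps W_0+\eps^2 W_1$ as a Lie series, read off $M_1^z=\{z,W_0\}$ and $M_2^z=\{z,W_1\}+\text{(quadratic bracket)}$, and estimate the third-order remainder $\{\{\{z,W_0\},W_0\},W_0\}+\{\{z,W_0\},W_1\}+\{\{z,W_1\},W_0\}$ using the sizes of $W_0,W_1$. You in fact supply more detail than the paper's short proof.

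There is one genuine slip in your $\beta$-accounting. You write that ``each $\pa_s$ derivative in the anisotropic norm costs a factor $\beta^{-1}$\ldots whereas $\pa_I,\pa_x,\pa_y$ are order $\OO^*(1)$.'' This is backwards. By the definition of $\|\cdot\|_j^{(b)}$ in the paper, the weight $b^{l'}$ is attached to the $I$-derivatives, not to $s$: from $\|f\|_r^{(\beta)}\le C$ one gets $|\pa_I f|\le C\beta^{-1}$ while $|\pa_s f|,|\pa_x f|,|\pa_y f|\le C$. Hence if $W_0=\OO^*(\beta^{-1})$ then
\[
M_1^I=\pa_s W_0=\OO^*(\beta^{-1}),\qquad M_1^s=-\pa_I W_0=\OO^*(\beta^{-2}),
\]
which is the correct asymmetry. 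Your stated reasoning would give the opposite (namely $M_1^I=\OO^*(\beta^{-2})$ and $M_1^s=\OO^*(\beta^{-1})$), contradicting the lemma even though you then assert the right conclusions. The same correction propagates to the second order and the remainder: the extra $\beta^{-1}$ in the $s$-component comes from the fact that $\{s,\cdot\}=-\pa_I(\cdot)$ carries an $I$-derivative, and each Poisson bracket $\{f,g\}$ picks up at worst one $\pa_I$ (from the $(I,s)$-block), which is what produces $\beta^{-5}$ versus $\beta^{-6}$ in the remainder. Once you swap the roles of $I$ and $s$ in your anisotropy discussion, the rest of your argument goes through as written.
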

\begin{proof}
It is enough to recall that 
$$
\wh I=I
+\eps \{I,W\}+\eps^2 \{\{I,W\},W\}+\ldots.
$$ 
To compute the remainder, one has to 
estimate 
\[
 \{\{\{z,W_0\},W_0\}, W_0\}+\{\{z,W_0\},W_1\}+\{\{z,W_1\},W_0\},\qquad 
z=I,s,x,y.
\]
Using the estimates for $W_0$ and $W_1$, one obtains 
the bounds for the remainder.
\end{proof}

Now we can apply this symplectic change of coordinates to the Hamiltonian 
$\HH_\eps$ given by Lemma \ref{lemma:Moser}.

\begin{lemma}\label{lemma:HamInPQSmall}
The Hamiltonian $\HH_\eps \circ\Phi$ is $\CCC^{s-4}$ and is of  the
following form
\[
\begin{split}
 \HH_\eps \circ\Phi (\wh I,\wh s,\wh x,\wh y,\wh t)=&E(\wh I\,)
+g(\wh I,\wh x \wh y)+ \eps \ol{ \bf H}_1(\wh I,\wh s,\wh t\,)+\eps \wh x\wh y \,
\ol{ \bf H}_2(\wh I,\wh s,\wh t\,)\\
 &+\eps^2 \,\ol{ \bf H}_3(\wh I,\wh s,\wh t\,) +\OO^*\left(\eps^{3}\beta^{-4}
+\eps^{2}\beta^{-2}\wh x\wh y+\eps(\wh x\wh y)^2\right).
\end{split}
\]
\end{lemma}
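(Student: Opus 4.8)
The statement is a bookkeeping consequence of the two cohomological lemmas (Lemmas \ref{lemma:NormalForm:FirstOrderCohomological} and \ref{lemma:NormalForm:SecondOrderCohomological}) together with the change-of-coordinates estimates of Lemma \ref{lemma:NormalFormChangeOfCoordinates}. The plan is to expand $\HH_\eps \circ \Phi$ via the Lie series, collect the terms order by order in $\eps$, substitute the defining equations of $W_0$ and $W_1$, and then carefully bound every remaining term using the anisotropic skew-norm estimates $W_0 = \OO^*(\beta\ii)$, $W_1 = \OO^*(\beta^{-3})$, and $\wh x \wh y = xy + \OO^*(\eps\beta\ii)$.

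\textbf{Step 1: the Lie expansion.} Since $\Phi$ is the time-one map of the flow of $\eps W = \eps W_0 + \eps^2 W_1$, one has the formal identity
\[
\HH_\eps \circ \Phi = \HH_\eps + \eps\{\HH_\eps, W\} + \tfrac{\eps^2}{2}\{\{\HH_\eps, W\}, W\} + \OO(\eps^3 \text{ Lie terms}).
\]
Substituting $\HH_\eps = \HH_0 + \eps \HH_1 + \eps^2 \HH_2 + \OO(\eps^3)$ and $W = W_0 + \eps W_1$, and grouping by powers of $\eps$, gives: order $\eps^0$: $\HH_0 = E(\wh I) + g(\wh I, \wh x \wh y)$; order $\eps^1$: $\HH_1 + \{\HH_0, W_0\}$; order $\eps^2$: $\HH_2 + \{\HH_1, W_0\} + \{\HH_0, W_1\} + \tfrac12\{\{\HH_0, W_0\}, W_0\}$ — note one should double-check the combinatorial coefficient on the double bracket, which is why the excerpt wrote $\{\{\HH,W\},W\}$ rather than $\tfrac12\{\{\HH,W\},W\}$; the normal-form equations must be written to match whatever convention is fixed. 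The key observation is that $\{\HH_0, W_0\}$ equals $(\nu(I) + \pa_I g)\pa_s W_0 + \pa_t W_0 + \{g, W_0\}_{(x,y)}$ (since $\HH_0 = E(I) + g(I, xy)$ and $E$ has no dependence on $s,t,x,y$, while the $\pa_t W_0$ term comes from viewing $t$ as a cyclic variable with conjugate momentum frozen), so the order-$\eps^1$ term is exactly $\HH_1 - \sum_{j=1}^{3}\HH_1^{(j)} - xy\sum_{j=4}^{6}\HH_1^{(j)}$ by Lemma \ref{lemma:NormalForm:FirstOrderCohomological}, which collapses to $\ol{\bf H}_1(I,s,t) + xy\,\ol{\bf H}_2(I,s,t) + \OO^*_2(xy)$ by the splitting of $\HH_1$ done right before that lemma.

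\textbf{Step 2: the second order.} At order $\eps^2$, the cohomological equation of Lemma \ref{lemma:NormalForm:SecondOrderCohomological} says $\{\HH_0, W_1\} = -\sum_{j=1}^3 \HH_2^{(j)}$ modulo the $\{g,W_1\}_{(x,y)}$ piece, which combines with $\wt\HH_2 := \{\HH_1, W_0\} + \{\{\HH_0,W_0\},W_0\} + \HH_2$ to leave precisely $\wt\HH_2 - \sum_{j=1}^{3}\HH_2^{(j)} = \ol{\bf H}_3(I,s,t) + \OO^*(xy)$ by the second splitting. So after the two normal-form steps the Hamiltonian reads $E + g + \eps \ol{\bf H}_1 + \eps \,xy\,\ol{\bf H}_2 + \eps^2 \ol{\bf H}_3$ plus errors, all still evaluated at the \emph{old} coordinates $(I,s,x,y,t)$; the final step is to re-express these in the hatted coordinates via Lemma \ref{lemma:NormalFormChangeOfCoordinates}, noting $I = \wh I + \OO^*(\eps\beta\ii)$, $s = \wh s + \OO^*(\eps\beta^{-2})$, and $xy = \wh x\wh y + \OO^*(\eps\beta\ii)$ — the discrepancies get absorbed into the stated remainder by Taylor expanding $E, g, \ol{\bf H}_1, \ol{\bf H}_2, \ol{\bf H}_3$ and using that $\beta$-derivatives of these functions are $\OO^*(1)$ in the skew norm. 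One must verify that $E(I) - E(\wh I) = \OO^*(\eps)$ is harmless only because it is $s,t$-independent, and that $g(I, xy) - g(\wh I, \wh x\wh y)$ produces the terms $\eps^2\beta^{-2}\wh x\wh y$ and $\eps(\wh x\wh y)^2$ in the remainder through the $\OO_2(xy)$ part of $g$; this is the bookkeeping heart of the estimate.

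\textbf{Main obstacle.} The genuinely delicate point is tracking the smoothness loss and the powers of $\beta$ simultaneously: each Poisson bracket against $W_0$ or $W_1$ costs one derivative (so $\wt\HH_2$ is only $\CCC^{s-3}$ and $\Phi$ only $\CCC^{s-4}$), while each bracket also inflates the $\beta$-power by a factor $\beta\ii$ or $\beta^{-2}$ in the skew norm (from the $\pa\ii$'s hidden in $W_0, W_1$ and from the anisotropic weight on $\pa_s$ versus $\pa_I$). The claim $\OO^*(\eps^3\beta^{-4} + \eps^2\beta^{-2}\wh x\wh y + \eps(\wh x\wh y)^2)$ says that the worst $\eps^3$ term is $\beta^{-4}$ (not $\beta^{-5}$ or worse), so one has to be careful that the leftover third-order Lie terms — of the schematic form $\{\{\{\HH_0, W_0\}, W_0\}, W_0\}$, $\{\{\HH_1, W_0\}, W_0\}$, $\{\HH_2, W_0\}$, $\{\{\HH_0, W_0\}, W_1\}$, etc. — when evaluated in the region $|xy| \sim \eps^{1+a}$ and combined with the coordinate-change remainders $\OO^*(\eps^3\beta^{-5})$ from Lemma \ref{lemma:NormalFormChangeOfCoordinates}, still collapse to the stated bound after using $|\wh x\wh y| \lesssim \eps$ to trade a factor $\beta\ii$ for a factor $\wh x\wh y$ wherever the dangerous $\beta^{-5}$ appears. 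Laying out this trade-off cleanly — which $\eps^3$ contributions carry an extra $\wh x\wh y$ and can therefore be reabsorbed into $\eps^2\beta^{-2}\wh x\wh y$ or $\eps(\wh x\wh y)^2$, and which are genuinely $\OO^*(\eps^3\beta^{-4})$ — is the one part of the argument that is not purely mechanical.
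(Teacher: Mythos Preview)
Your approach is correct and matches the paper's: the paper in fact gives no explicit proof of this lemma, presenting it as an immediate consequence of the Lie expansion displayed before Lemma~\ref{lemma:NormalForm:FirstOrderCohomological} together with the two cohomological lemmas, which is exactly your Steps~1--2.

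One unnecessary detour, however: your ``final step'' of re-expressing everything in the hatted coordinates via Lemma~\ref{lemma:NormalFormChangeOfCoordinates} is not needed. The Lie series identity
\[
(\HH_\eps\circ\Phi)(\wh z)=\HH_\eps(\wh z)+\{\HH_\eps,\eps W\}(\wh z)+\tfrac{1}{2}\{\{\HH_\eps,\eps W\},\eps W\}(\wh z)+\cdots
\]
already evaluates every term at the \emph{new} point $\wh z=(\wh I,\wh s,\wh x,\wh y,\wh t)$; the functions $E$, $g$, $\ol{\bf H}_j$, $W_0$, $W_1$ are just functions on phase space and appear with hatted arguments from the outset. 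So there is no $E(I)-E(\wh I)$ discrepancy to worry about, and the $\OO^*(\eps^3\beta^{-5})$ remainders of Lemma~\ref{lemma:NormalFormChangeOfCoordinates} never enter the Hamiltonian computation --- the $\eps^3$ error in the lemma comes purely from the third-order Lie brackets you list in your ``Main obstacle'' paragraph. With that simplification the $\beta$-bookkeeping you flag is indeed the only non-mechanical point, and it goes through as you outline (the paper does not spell it out either).
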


\section{Transition near the singularity dynamics in the normal form}\label{sec:flowestimates}
We compute the equation associated to the Hamiltonian given in Lemma
\ref{lemma:HamInPQSmall}. We drop the hats to simplify notation. Following 
\cite{Treschev02a}, consider a region for the initial conditions of 
the form
{\small \begin{equation}\label{def:DomainForFlow}
U_*=\left\{ (I,s,x,y):c_*<|x|<c_*\ii,\,c_0^{-1} 
(\eps\beta\ii+|xy|)^2
\log^2(xy)\leq |xy|\leq \kappa_*\right\}
\end{equation}}
and a final time $\ol t$ with
\[
 c_*\leq |y_*|\,e^{\pa_1 g(I^*, \rr)\ol t}\leq c_*^{-1}.
\]
Take, as in \cite{Treschev02a}, $c^*$ and $c_0$
independent of $\eps$ and $\kk_*\sim\eps$.

The Hamiltonian obtained in Lemma \ref{lemma:HamInPQSmall} 
has different formulas in non-resonant and resonant zones. 
We first analyze it in the non-resonant zone and later in 
the resonant one, which are defined  in \eqref{def:SR} and 
\eqref{def:DR} respectively.

\subsection{The non-resonant regime}\label{sec:flowestimates:SR}

Recall that we study trigonometric perturbations. 
In the non-resonant zone \eqref{def:SR}, analyzing 
(\ref{eq:boldfaceH}), (\ref{eq:boldface-H2}), and 
(\ref{eq:boldface-H3})
we have that $\ol{\bf H}_j=0$ for $j=1,2,3$. Therefore, we have the
Hamiltonian 
\begin{equation}\label{def:HamNFSR}
\HH_\eps \circ\Phi (\wh I,\wh s,\wh x,\wh y,\wh t)=E(\,\wh I\,)
+g(\wh I,\wh x\wh y)+ \OO^*\left(\eps^{3}\beta^{-4}
+\eps^{2}\beta^{-2}\wh x\wh y+\eps(\wh x\wh y)^2\right),
\end{equation}
which is $\CCC^{s-4}$ and is integrable  up to order 3. 


\

\begin{lemma}\label{lemma:Flow:SR}
Suppose that for some $(I_*,s_*,x_*,y_*)\in U_*$ 
(see \eqref{def:DomainForFlow})
and 
$\ol t\in\RR$, 
\[
 c_*\leq |y_*|\,e^{\pa_2 g(I,\rr)\ol t}\leq c_*\ii.
\]
where $\rr=|x_*y_*|$.
 
Then,
\[
\begin{split}
  s(\ol t)&=\ s^* +\left(\nu(I^*)+\pa_I
g(I^*,\rr)\right)\ol t+\OO^*\left(\eps^{3}\beta^{-5}+\eps^{2}\beta^{-2}
\rr+\eps\rr^2\right)\log^2\rr\\
 I(\ol t)&=\ I^* +\OO^*\left(\eps^{3}\beta^{-4}+\eps^{2}\beta^{-2}
\rr+\eps\rr^2\right)|\log\rr|\\
 x(\ol t)&=\ x_*e^{-\pa_2 g(I^*,\rr)\ol t}
+\OO^*\left(\eps^{3}\beta^{-4}+\eps^{2}\beta^{-2}
\rr+\eps\rr^2\right)\\
  y(\ol t)&=\ y_*e^{\,\pa_2 g(I^*,\rr)\ol t}\left(1
+\OO^*\left(\eps^{3}\beta^{-4}+\eps^{2}\beta^{-2}
\rr+\eps\rr^2\right)\right)\\
x(\ol t)y(\ol t)&=\ 
x^*y^*+\OO^*\left(\eps^{3}\beta^{-4}+\eps^{2}\beta^{-2}
\rr+\eps\rr^2\right)|\log\rr|.
\end{split}
\]
\end{lemma}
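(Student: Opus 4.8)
The plan is to integrate the Hamiltonian equations associated to \eqref{def:HamNFSR} directly, treating the $\OO^*$-remainder as a forcing term and using a Gronwall-type argument on the exponentially contracting/expanding window specified by $U_*$ and the choice of $\ol t$. First I would write down the equations of motion. Denoting the remainder in \eqref{def:HamNFSR} by $R(I,s,x,y,t)=\OO^*(\eps^3\beta^{-4}+\eps^2\beta^{-2}xy+\eps(xy)^2)$, Hamilton's equations give $\dot I=-\pa_s R$, $\dot s=\nu(I)+\pa_I g(I,xy)+\pa_I R$, $\dot x=-\pa_2 g(I,xy)\,x-\pa_y R$, $\dot y=\pa_2 g(I,xy)\,y+\pa_x R$, so that for the unperturbed part $I$ is a first integral, $xy$ is a first integral (since $\frac{d}{dt}(xy)=0$ for the $g$-flow), and the motion is the explicit hyperbolic rotation $x(t)=x_*e^{-\pa_2 g t}$, $y(t)=y_*e^{\pa_2 g t}$. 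The whole point is that the remainder $R$ is small \emph{and} its derivatives, measured in the anisotropic norm $\|\cdot\|^*$, are small on $U_*$; here I would invoke Lemma~\ref{lemma:HamInPQSmall} together with the Cauchy estimates implicit in the $\OO^*$ notation to bound $\pa_s R$, $\pa_I R$, $\pa_x R$, $\pa_y R$ by the same order $\OO^*(\eps^3\beta^{-4}+\eps^2\beta^{-2}\rr+\eps\rr^2)$, possibly after shrinking the domain by a fixed factor, using that the lost derivative is absorbed since $\HH_\eps\circ\Phi$ is $\CCC^{s-4}$ with $s\ge 6$.

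Next I would set up the variational/integral-equation comparison. Write $I(t)=I_*+\delta I(t)$, and similarly for the other variables relative to the explicit unperturbed solution; plug into the integral form of the equations. The key observation is that $|xy|=\rr$ stays comparable to its initial value along the orbit — this is almost a conserved quantity, perturbed only at order $\OO^*(\cdots)$ — so the remainder $R$ and its derivatives remain of constant order $E:=\eps^3\beta^{-4}+\eps^2\beta^{-2}\rr+\eps\rr^2$ throughout. The time interval has length $|\ol t|=\OO(|\log\rr|)$ because $|y_*|e^{\pa_2 g\,\ol t}$ is squeezed between $c_*$ and $c_*\ii$ while $|x_*y_*|=\rr$, hence $|x_*|\sim \rr\,e^{\pa_2 g\,\ol t}$ is bounded, forcing $e^{\pa_2 g\,|\ol t|}\lesssim \rr\ii$. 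For the $I$ and $s$ components the forcing is directly $\OO^*(E)$ integrated over time $\OO(|\log\rr|)$, giving $\OO^*(E)|\log\rr|$ for $I$ (matching the claim) and — because $\dot s$ also picks up the $\OO^*(E)$-variation of $\pa_I g(I,xy)$ through $\delta I$ and $\delta(xy)$, each of size $\OO^*(E)|\log\rr|$, integrated again — an extra logarithmic factor, yielding the $\OO^*(E)\log^2\rr$ in the $s$-equation. For the hyperbolic pair $(x,y)$, the contracting coordinate $x$ is the easy one: $x(t)=x_*e^{-\pa_2 g\,t}+\int$(small)$\,e^{-\pa_2 g(t-t')}dt'$, and since $\pa_2 g\approx\la>0$ the convolution kernel is integrable, giving the clean $\OO^*(E)$ bound with no log loss; for the expanding coordinate $y$ one factors out $e^{\pa_2 g\,t}$ and estimates the resulting bounded-variation integral, producing the multiplicative form $y_*e^{\pa_2 g\,\ol t}(1+\OO^*(E))$. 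Finally $x(\ol t)y(\ol t)$: writing the product, the leading terms give $x_*y_*$ plus cross terms of size $\OO^*(E)$ times $e^{\pm\pa_2 g\,\ol t}$ which at $t=\ol t$ are $\OO(\rr\ii)$ and $\OO(1)$ respectively — so naively one would get $\OO^*(E)\rr\ii$, but here one must use the near-conservation of $xy$ more carefully: differentiating $xy$ along the flow kills the $g$-part exactly, leaving $\frac{d}{dt}(xy)=x\pa_x R - y\pa_y R$, and since $x\sim e^{-\pa_2 g t}$, $y\sim e^{\pa_2 g t}$ this is again $\OO^*(E)$ uniformly, so integrating over time $\OO(|\log\rr|)$ gives exactly $\OO^*(E)|\log\rr|$.

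The main obstacle I anticipate is the $x(\ol t)y(\ol t)$ estimate: getting $\OO^*(E)|\log\rr|$ rather than the much worse $\OO^*(E)\rr\ii$ that a term-by-term multiplication suggests requires exploiting the exact cancellation of the $g$-flow in $\frac{d}{dt}(xy)$ and then the fact that the hyperbolic scalings in $x\pa_x R$ and $y\pa_y R$ compensate — this is the place where the structure of the normal form (that $R$ has no $xy$-independent part in the non-resonant zone, and the specific powers of $\beta$ and $\rr$ in Lemma~\ref{lemma:HamInPQSmall}) is really used, rather than just the size. A secondary technical point is the careful bookkeeping of where each logarithmic factor comes from: one for $I$, two for $s$ (because $\delta(xy)$ and $\delta I$ already carry one log and feed back into $\dot s$), none for $x$, and a bounded-variation argument for $y$. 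Once these estimates are in place, a standard Picard/Gronwall iteration on $U_*$ (with the a priori assumption $|\delta(xy)|\le\frac12\rr$, say, closed by the smallness of $E|\log\rr|$ which follows from the lower bound $|xy|\ge c_0\ii(\eps\beta\ii+|xy|)^2\log^2(xy)$ built into $U_*$) makes the whole argument rigorous.
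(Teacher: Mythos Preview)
Your approach is correct and is essentially the paper's own (very terse) proof: first bound $\frac{d}{dt}(xy)$, then $I$, then deduce the remaining components by treating $I$ and $xy$ as almost constant. One simplification you are missing: no structural cancellation is needed for the $xy$ estimate---since $|x(t)|,|y(t)|\le c_*^{-1}$ throughout the time window (this is exactly what the definition of $U_*$ together with the hypothesis $c_*\le |y_*|\,e^{\pa_2 g(I,\rr)\ol t}\le c_*^{-1}$ enforces), each of $x\,\pa_x R$ and $y\,\pa_y R$ is individually $\OO^*(E)$, and the $\OO^*$ symbol is an anisotropic $\CCC^r$ norm that already controls derivatives directly, so no Cauchy-type estimates are involved.
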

\begin{proof}
The proof of this lemma is a direct consequence of 
the particular form of the equations associated to Hamiltonian \eqref{def:HamNFSR}. Indeed, one can easily see that
\[
 \frac{d}{dt}(xy)=\OO^*\left(\eps^{3}\beta^{-4}
+\eps^{2}\beta^{-2}\wh x\wh y+\eps(\wh x\wh y)^2\right).
\]
Therefore, one can easily see that 
\[
 |x(\ol t)y(\ol t)-x^*y^*|=\OO^*\left(\eps^{3}\beta^{-4}
+\eps^{2}\beta^{-2}\rr+\eps\rr^2\right)|\log\rr|.
\]
Taking this into account, we have
\[
 \dot I=\OO^*\left(\eps^{3}\beta^{-4}
+\eps^{2}\beta^{-2}\rr+\eps\rr^2\right)
\]
which leads to the formula for $I(\ol t)$. Using that $I$ is almost constant,
one can easily deduce the formulas for the other variables.
\end{proof}

\subsection{The resonant regime}\label{sec:flowestimates:DR}
To analyze the resonant regime recall that we focus on the case 
{\it of two and a half degrees of freedom.} Namely, $s\in\TT$ 
and $I\in\RR$. We perform a change to slow-fast variables. 
This leads to a Hamiltonian which is almost a first integral,
namely, its time dependent terms are small. 

Fix $(k_0,k_1)\in\ZZ^2$. Assume that the 
resonance $(\nu(I),1)\cdot (k_0,k_1)=0$, 
$(k_0, k_1)\in\NNN^{(2)}(H_1)\subset\ZZ^2$ is located at $I=0$. 
Call $A$ the variable conjugate to time. Then, the change 
\[
 (J,\theta, D, t)=\left( \frac{I}{k_0},k_0s+k_1t,
A-\frac{k_1}{k_0}I, t\right)
\]
is symplectic. Applying this change, one obtains the 
following Hamiltonian. 
We drop the hats to simplify notations.
\[
 \wt\HH(J,\theta,x,y,t)=\wt E(J)+\wt g(J,xy)+ 
\eps \wt \HH_1(J,\theta, xy)+
\OO^*\left(\eps^{3}\beta^{-4}+\eps^{2}\beta^{-2}
 x y+\eps( x y)^2\right).
\]
where 
\[
 \wt E(J)=E(k_0J)+k_1J,
\]
satisfies 
\[
\pa_J \wt E(0)=0, \ \ \ \wt g(J,xy)=g(k_0J,xy)
\] 
and 
\[
 \wt \HH_1(J,\theta, xy)=\]
\[\sum_{j=-N}^N
\left(\ol{ \bf H}_1^{(jk_0,jk_1)}(k_0J)+ 
xy\, \ol{ \bf H}_2^{(jk_0,jk_1)}(k_0J)+
\eps\, \ol{ \bf H}_3^{(jk_0,jk_1)}(k_0J)\right)e^{2\pi i j\theta}.
\]
We use this system of coordinates to analyze the flow in 
the resonant zones. Recall that by construction $\wt\nu(0)=0$. Since we 
consider $\beta>0$ fixed and in order to avoid cluttering the notation, from 
now on, we do not keep track of the $\beta$ dependence of each estimate. We 
also assume $\rr\lesssim\eps$.



\begin{lemma}\label{lemma:Flow:DR}
Suppose that for some $(J_*,\theta_*,x_*,y_*)\in U_*$ and 
$\ol t\in\RR$, 
\[
 c_*\leq |y_*|e^{\pa_2 g(J_*,\rr)\ol t}\leq c_*\ii.
\]
where $\rr=|x_*y_*|$.
 
Then,
\[
\begin{split}
  \theta(\ol t)=&\theta^* +\left(\nu(J^*)
+\pa_1 g(J^*,\rr)\right)\ol t+\eps F_1(J^*,\theta^*,\ol 
t)+\OO^*\left(\eps^{11/6}\right)\\
 J(\ol t)=&J^* +\eps G_1(J^*,\theta^*,\ol 
t)+\OO^*\left(\eps^{11/6}\right)\\
 x(\ol t)=&x_*e^{-\left(\pa_2 g(J^*,\rr)\ol t+\eps \Phi(J^*,\theta^*,
\ol t)\right)} +\OO^*\left(\eps^{11/6}\right)\\
 y(\ol t)=&y_*e^{\pa_2 g(J^*,\rr)\ol t
+\eps \Phi(J^*,\theta^*, \ol t)}
\left(1 +\OO^*\left(\eps^{11/6}\right)\right)\\
x(\ol t)y(\ol t)=&x^*y^*+\OO^*\left(\eps^{5/2}\right),
\end{split}
\]
where 
\begin{align}
F_1 (J,\theta,\ol t)&=-\frac{\nu'(J)}{\nu(J)}\sum_{k=-N, k\neq 
0}^{N}H_1^k(J)e^{2\pi i 
\theta}\left(\ol t-\frac{e^{2\pi i k\nu(J)\ol t}-1}{2\pi i 
k\nu(J)}\right)\label{def:F1}\\
&+\pa_J H_1(J)\ol t+\sum_{k=-N, k\neq 0}^N\frac{1}{2\pi i k \nu(J)}\pa_J 
H_1(J)e^{2\pi i k \theta}\left(e^{2\pi i k \nu (J)\ol t}-1\right)\notag\\
G_1(J,\theta,\ol t)&=\nu(J)\ii \left(\ol{\bf H}_1(J,\theta)-\ol{\bf 
H}_1(J,\theta+\nu(J)\ol t)\right)\label{def:G1}
 \end{align}
and 
\[
 \Phi(J,\theta,\ol t)=
\int_0^{\ol t}\left(\ol{ \bf
H}_2(J,\theta+\nu(J)t)+\pa_{rJ}g(\rr,J)G_1(J,
\theta,
t)\right)dt.
\]
Thus, $G$ has zero average with respect to $\theta$ and $F$ and $G$ satisfy
\[
 F=\OO^*(\log\eps)\qquad \text{ and }\qquad G=\OO^*(1).
\]
\end{lemma}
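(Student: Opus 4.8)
The plan is to integrate Hamilton's equations of the slow--fast Hamiltonian $\wt\HH$ displayed just before the statement (obtained from Lemma~\ref{lemma:HamInPQSmall} by the symplectic change to $(J,\theta,D,t)$), splitting it into the integrable core $\wt E(J)+\wt g(J,xy)$, the first order term $\eps\wt\HH_1(J,\theta,xy)$, and the remainder $R=\OO^*(\eps^3\beta^{-4}+\eps^2\beta^{-2}xy+\eps(xy)^2)$, along orbits starting in $U_*$ (see \eqref{def:DomainForFlow}) and run for the time $\ol t$ fixed by $c_*\le|y_*|e^{\pa_2 g(J_*,\rr)\ol t}\le c_*^{-1}$. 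Throughout one uses $\rr\lesssim\eps$ together with the lower bound on $|xy|$ built into $U_*$, which give $\ol t=\OO(|\log\eps|)$; this single time scale is what all error terms are measured against.

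First I would freeze the product $r=xy$. Since $\wt\HH_1$ depends on $(x,y)$ only through $r$, one has $\{xy,\wt\HH_1\}=0$, so $\frac{d}{dt}(xy)=\{xy,R\}=\OO^*(R)$, and integration over $[0,\ol t]$ gives $xy(\ol t)=x^*y^*+\OO^*(\eps^{5/2})$. Freezing $xy\equiv\rr$ in the leading terms, the reduced $(J,\theta)$ flow reads $\dot J=-\eps\pa_\theta\wt\HH_1(J,\theta,\rr)+\OO^*(R)$ and $\dot\theta=\nu(J)+\pa_1 g(J,\rr)+\eps\pa_J\wt\HH_1+\OO^*(R)$. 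A Gronwall/bootstrap estimate on the interval of length $\ol t$ then yields the crude bounds $J(t)=J^*+\OO(\eps|\log\eps|)$ and $\theta(t)=\theta^*+(\nu(J^*)+\pa_1 g(J^*,\rr))t+\OO(\eps|\log\eps|^2)$, which is precisely what makes one Picard iteration suffice.

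Next I would perform that iteration. Substituting the crude trajectory into the $J$-equation, using $\wt\HH_1(J,\theta,\rr)=\ol{\bf H}_1(J,\theta)+\OO(\rr+\eps)$ and the elementary primitive $\int_0^{\ol t}\pa_\theta\ol{\bf H}_1(J^*,\theta^*+\nu(J^*)t)\,dt=\nu(J^*)^{-1}\big(\ol{\bf H}_1(J^*,\theta^*+\nu(J^*)\ol t)-\ol{\bf H}_1(J^*,\theta^*)\big)$, one gets $J(\ol t)=J^*+\eps G_1(J^*,\theta^*,\ol t)+\OO^*(\eps^{11/6})$ with $G_1$ as in \eqref{def:G1} (its vanishing $\theta$-average is immediate from the formula). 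For $\theta(\ol t)$ one expands $\nu(J(t))=\nu(J^*)+\nu'(J^*)\big(J(t)-J^*\big)+\OO(\eps^2|\log\eps|^2)$, plugs in $J(t)-J^*=\eps G_1(J^*,\theta^*,t)+\ldots$ from the previous step, adds the $\eps\pa_J\wt\HH_1$ contribution (again integrated through an $\ol{\bf H}_1$-type primitive) and the $\pa_1 g(J(t),xy(t))$ contribution (equal to $\pa_1 g(J^*,\rr)\ol t$ up to admissible error), and collects the resulting Fourier sums into exactly \eqref{def:F1}, with remainder $\OO^*(\eps^{11/6})$. Finally, from $\dot x/x$ and $\dot y/y$, which equal $\mp\big(\pa_2 g(J,xy)+\eps\pa_{xy}\wt\HH_1(J,\theta,xy)\big)+\OO^*(R)$, integration and substitution of the now-known $J(t),\theta(t),xy(t)$ give the stated formulas for $x(\ol t),y(\ol t)$; here the $\pa_{rJ}g\,G_1$ term in $\Phi$ arises from Taylor-expanding $\pa_2 g$ in its $J$-argument, and one sets $\Phi(J,\theta,\ol t)=\int_0^{\ol t}\big(\ol{\bf H}_2(J,\theta+\nu(J)t)+\pa_{rJ}g(\rr,J)G_1(J,\theta,t)\big)dt$. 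The bounds $F=\OO^*(\log\eps)$, $G=\OO^*(1)$ are then read off the formulas.

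The main obstacle is the bookkeeping of the error terms, in two respects. First, everything must be controlled in the skew norm $\|\cdot\|_{s-4}^{(\beta)}$, i.e.\ together with derivatives in $(J,\theta,xy)$ and with the anisotropic weight, so each Gronwall/Picard step has to be differentiated; the attendant loss of derivatives is what forces $s\ge6$. Second --- and this is the genuinely delicate point --- near the exact resonance $\nu(J^*)\to0$ the formulas for $G_1$ and $F_1$ are only superficially singular: the combinations $\ol{\bf H}_1(J,\theta)-\ol{\bf H}_1(J,\theta+\nu\ol t)$ and $\ol t-\big(e^{2\pi i k\nu\ol t}-1\big)/(2\pi i k\nu)$ vanish to first, resp.\ second, order in $\nu$, so after cancellation $G_1$ and $F_1$ extend to bounded $\CCC^{s-4}$ functions across $\{\nu=0\}$; verifying this uniformly, so that the final remainders are genuinely $\OO^*(\eps^{11/6})$ and $\OO^*(\eps^{5/2})$ even as $\nu\to0$, is the crux. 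The remaining estimates parallel the non-resonant Lemma~\ref{lemma:Flow:SR}.
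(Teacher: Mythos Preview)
Your proposal is correct and follows essentially the same route as the paper: freeze $xy$ first via $\{xy,\wt\HH_1\}=0$, then solve the decoupled $(J,\theta)$ system by one Picard step to extract $G_1$ and $F_1$, and finally integrate the $(x,y)$ equations using the now-known $(J(t),\theta(t))$ to obtain $\Phi$. The only cosmetic difference is that the paper handles the $x$-equation by Duhamel (variation of constants) rather than via the logarithmic derivative $\dot x/x$; your $\OO^*(R)$ for the error in $\dot x/x$ is a slight understatement (the inhomogeneity in $\dot x$ is $\OO^*(\eps^2\log\eps)$, not of size $R$, once the $(J,\theta)$ approximations are substituted), but either bookkeeping yields the stated $\OO^*(\eps^{11/6})$ remainder, and your explicit treatment of the $\nu\to0$ cancellation is in fact more careful than the paper's.
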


\begin{proof}
From Lemma \ref{lemma:HamInPQSmall}, we have the equations
\[
 \begin{split}
\dot \theta=&\ \   \nu(J)+ \pa_Jg(J,xy)+ \eps \pa_J\ol{ \bf
H}_1(J,\theta)+\eps  xy \pa_J\ol{ \bf
H}_2(J,\theta)+\eps^2 \pa_J\ol{ \bf
H}_3(J,\theta)\\
&+\OO^*\left(\eps^{3}\beta^{-4}+\eps^{2}\beta^{-2}
xy+\eps(xy)^2\right).\\
\dot J=&\ -\eps \pa_\theta \ol{ \bf
H}_1(J,\theta)-\eps xy \pa_\theta \ol{ \bf
H}_2(J,\theta)-\eps^2 \pa_\theta \ol{ \bf
H}_3(J,\theta)\\
&+\OO^*\left(\eps^{3}\beta^{-4}+\eps^{2}\beta^{-2}
xy+\eps(xy)^2\right)\\
\dot y=&\ \ \pa_r g(J,xy)y+ \eps  y \ol{ \bf
H}_2(J,\theta)+\OO_2^*(\eps\beta\ii,xy)\\
\dot x=&\ -\pa_r g(J,xy)x+ \eps  x \ol{ \bf
H}_2(J,\theta)+\OO_2^*(\eps\beta\ii,xy).
 \end{split}
\]
We also have 
\[
\frac{d}{dt}(xy)=\OO^*\left(\eps^{3}\beta^{-4}+\eps^{2}\beta^{-2}
xy+\eps(xy)^2\right).
\]
Now we compute estimates for this flow. Call $(I^*,s^*,q^*,p^*)$ to the initial
point. Recall that $\rr=x^*y^*$, that we omit the dependence on $\beta$ and 
that we assume $\rr\lesssim\eps$. Then, 
\begin{equation}\label{eq:EvolutionPQ}
|xy-x^*y^*|=\OO^*\left(\eps^{3}\log\eps\right).
\end{equation}
This implies that the first orders of $\dot \theta$ and $\dot J$ are 
independent of 
$x$ and $y$, since only depend on $\rr$. Indeed, we have
\[
 \begin{split}
\dot \theta=&  \nu(J)+ \pa_Jg(\rr,J)+ \eps \pa_J\ol{ \bf
H}_1(J,\theta)+\OO^*(\eps^2)\\
\dot J=&-\eps \pa_\theta \ol{ \bf
H}_1(J,\theta)+\OO^*(\eps^2).
 \end{split}
\]
These equations can be solved perturbatively in powers of $\eps$. 
We look formally for solutions of the following form and  we take $t^*=0$
\[
 \begin{split}
 \theta(t)&=\theta^* +\nu(J^*)t+\pa_J g(J^*,\rr)t+\eps 
F_1(J^*,\theta^*,t)+\text{h.o.t}\\
 J(t)&=J^* +\eps G_1(J^*,\theta^*,t)+\text{h.o.t}
 \end{split}
\]
Plugging these 
expressions into the equation and recalling that 
$g(J^*,\rr)=\pa_Jg(J^*,\rr)=\pa_J^2g(J^*,\rr)=\OO^*(\eps)$, we obtain the 
following equations
\[
\begin{split}
 \dot F_1=\,&\nu'(J^*)G_1 +\pa_J\ol{ \bf
H}_(J^*,\theta^*+\nu(J^*)t)\\
 \dot G_1=\,&-\pa_\theta \ol{ \bf H}_1(J^*,\theta^*+\nu(J^*)t)
\end{split}
\]
We can first solve the second equation, taking 
\[
   G_1(J^*,\theta^*,\ol t)=-\int_0^{\ol t}\pa_\theta \ol{ \bf 
H}_1(J^*,\theta^*+\nu(J^*)t) \,dt,
\]
which leads to the formula of $G_1$ stated in the lemma.

Plugging the expression of $G_1$ into the $F_1$ equation , we obtain 
$F_1$\footnote{The terms 
$F_1$ does not appear in \cite{Treschev02a}},
\[
\begin{split}
   F_1(J^*,\theta^*,\ol t)=&-\nu'(J^*)\int_0^{\ol
t}G_1(J^*,\theta^*,t)\,dt\\
   &+ \int_0^{\ol t}\pa_J\ol{ \bf
H}_1(J^*,\theta^*+\nu(J^*)t) \,dt.
\end{split}
\]
Now, expanding every term in 
Fourier series and integrating we get the formula for $F_1$.
From their  definition we can deduce that the functions $F_1$ and $G_1$ satisfy
\[
 F_1=\OO^*\left(\log\eps\right),\qquad
G_1=\OO^*\left(1\right).
\]
Analyzing the remainders, one can easily compute the size of the higher order 
terms in the evolution of $J$ and $\theta$ given in Lemma \ref{lemma:Flow:DR}.

Now we analyze the flow for the $x$ and $y$ variables. 
Recall that the conditions on the initial $y_*$ and on the time $\ol t$ imply 
\begin{equation}\label{eq:BoundExponentialTime}
 \frac{c_*^2}{\rr}\leq e^{\pa_2 g(J^*,\rr)\ol t}\leq \frac{1}{\rr c_*^2}.
\end{equation}
Using the almost conservation of $xy$ and the formulas for $\theta(t)$ and 
$J(t)$, we have
\[
 \begin{split}
  \dot y=&\left(\pa_r g(\rr,J(t))+ \eps \ol{ \bf
H}_2(J(t),\theta(t))\right)y+\Theta_1(t)\\
\dot x=&-\left(\pa_r g(\rr,J(t))+ \eps   \ol{ \bf
H}_2(J,\theta(t))\right)x+\Theta_2(t).
 \end{split}
\]
where
$\Theta_i=\OO^*(\eps^2\log\eps)$.

Thus, 
\[
\begin{split}
 x(\ol t)=&x_*e^{-\int_0^{\ol t}\left(\pa_r g(\rr,J(t))+ \eps \ol{ \bf
H}_2(J(t),\theta(t),t)\right)\,dt}\\
&+\int_{\sigma}^{\ol 
t}e^{-\int_\sigma^{\ol t}\left(\pa_r g(\rr,J(t))+ \eps \ol{ \bf
H}_2(J(t),\theta(t),t)\right)\,dt}\Theta_1(\sigma)\,d\sigma.
\end{split}
\]
Using \eqref{eq:BoundExponentialTime},
\[
 x(\ol t)=x_*e^{-\int_0^{\ol t}\left(\pa_r g(\rr,J(t))+ \eps \ol{ \bf
H}_2(J(t),\theta(t),t)\right)\,dt}+\OO^*(\eps^2\log^2\eps).
\]
Now we expand $J(t)$ and $\theta(t)$ with the formulas already obtained. Then, 
\[
x(\ol t)=x_*e^{-\left(\pa_r g(\rr,J^*)\ol t+\eps \Phi(J^*,\theta^*,
\ol t)\right)} +\OO^*(\eps^{11/6})
\]
with
\[
 \Phi(J^*,\theta^*,\ol t)=
\int_0^{\ol t}\left(\ol{ \bf
H}_2(J^*,\theta^*+\nu(J^*)t)+\pa_{rJ}g(\rr,J^*)G_1(J^*,
\theta^*,
t)\right)dt .
\]
Using this estimate and \eqref{eq:EvolutionPQ}, we can deduce the following 
formulas for $y(\ol t)$,
\[
 y(\ol t)=y_*e^{\pa_r g(\rr,J^*)\ol t+\eps \Phi(J^*,\theta^*,
\ol t)}\left(1 +\OO^*(\eps^{11/6})\right).
\]
\end{proof}

\section{The gluing maps}\label{sec:gluingmaps}
We compute the gluing maps. First in Section \ref{sec:gluing:H0}, we consider
the unperturbed Hamiltonian $\HH_0$ given in Lemma \ref{lemma:Moser}. Then, we
consider the full Hamiltonian in the non-resonant setting 
(Section \ref{sec:gluingmaps:SR}) and resonance 
setting (Section \ref{sec:gluingmaps:DR}).

\subsection{Unperturbed gluing maps}\label{sec:gluing:H0}
The unperturbed gluing maps are computed in Section 5 of \cite{Treschev02a}. 
Consider the gluing maps
\begin{equation}\label{def:GluingMaps}
\begin{split}
\SSS^+:&\{|y| \text{ is small}, x>0\}\longrightarrow\{ y>0, |x| \text{ is
small}\}\\
 \SSS^-:&\{|y| \text{ is small}, x>0\}\longrightarrow\{ y<0, |x| \text{ is
small}\}.
\end{split}
\end{equation}
The gluing maps of the unperturbed system must satisfy 
the following properties 
\begin{itemize}
\item[(i)] are symplectic
\item[(ii)] preserve $I$ and $xy$. 
\end{itemize}

\begin{lemma}[\cite{Treschev02a}]\label{lemma:GluingUnperturb}Properties
(i), (ii) and \eqref{def:GluingMaps} imply that the gluing maps 
\[
 (I^\pm, s^\pm,x^\pm,y^\pm)=\SSS^\pm(I,s,x,y)
\]
are of the form
\[
\begin{split}
 I^\pm&=I\\
 s^\pm&=s+\pa_I\Phi^\pm (I,xy)\\
 y^\pm&=x^{-1} e^{-\pa_r\Phi^\pm (I,xy)}\\
 x^\pm&=x^{2}y e^{\pa_r\Phi^\pm (I,xy)}
\end{split}
\]
for some functions $\Phi^\pm$. 
\end{lemma}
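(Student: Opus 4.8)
The plan is to integrate the constraints (i)--(ii) directly. The gluing map must carry a neighborhood of $\{|y|\text{ small},\,x>0\}$ to a neighborhood of $\{y>0,\,|x|\text{ small}\}$ (for $\SSS^+$; the lower case is symmetric), and it must preserve the two functions $I$ and the product $r:=xy$. First I would observe that since $I$ is preserved, the map acts fiberwise over $I$, so it suffices to work in the $(s,x,y)$ coordinates with $I$ a parameter and $\om_{(s,x,y)}=dI\wedge ds+dx\wedge dy$ the restricted form (more precisely, on each level $I=\text{const}$ we have the symplectic form $dx\wedge dy$ plus the conjugacy in $(I,s)$). Because $xy$ is also preserved, and $(r,s)$ together with one more coordinate parametrize the relevant region, the natural move is to pass to coordinates in which $r=xy$ and $s$ are two of the coordinates; a convenient third coordinate on $\{x>0\}$ is $\log x$ (equivalently $x$ itself). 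The symplectic form $dx\wedge dy = \tfrac{1}{x}\,dx\wedge d(xy) - \tfrac{1}{x} (xy)\, \tfrac{dx}{x}\wedge\ldots$; cleaner: write $y=r/x$, so $dx\wedge dy = dx\wedge d(r/x) = dx\wedge(\tfrac{dr}{x}-\tfrac{r}{x^2}dx)=\tfrac1x\,dx\wedge dr = d(\log x)\wedge dr$. Hence in coordinates $(I,s,\log x, r)$ the symplectic form is $dI\wedge ds + d(\log x)\wedge dr$, i.e. $(s,\log x)$ are ``position'' variables conjugate to $(I,r)$.

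Next I would exploit that the map preserves $I$ and $r$ and is symplectic: a symplectomorphism of $(I,s,\log x,r)$ that fixes the ``momenta'' $I$ and $r$ pointwise must be of the form
\[
(I,s,\log x, r)\longmapsto \bigl(I,\ s+\pa_I\Phi,\ \log x+\pa_r\Phi,\ r\bigr)
\]
for some generating function $\Phi=\Phi^\pm(I,r)$ depending only on the preserved quantities. This is the standard normal form for a symplectic map that is the identity on a Lagrangian foliation by the level sets of $(I,r)$: such a map is the time-one flow of a Hamiltonian depending only on $(I,r)$, or directly, writing the map as $\id$ plus corrections in the conjugate variables, closedness of the form forces the corrections $(\delta s,\delta(\log x))$ to be the gradient of a single function of $(I,r)$. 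I would spell this out by noting $d(s^+\,dI + \log x^+\,dr) - d(s\,dI+\log x\,dr)=0$ (since the map is exact symplectic and fixes $I,r$), so $(s^+-s)\,dI+(\log x^+-\log x)\,dr$ is closed, hence exact on the simply connected domain, giving $s^+-s=\pa_I\Phi^\pm$ and $\log x^+-\log x=\pa_r\Phi^\pm$.

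Finally I would translate back to the original variables. From $\log x^\pm=\log x+\pa_r\Phi^\pm(I,xy)$ we get $x^\pm = x\,e^{\pa_r\Phi^\pm(I,xy)}$; this is not yet the claimed formula, so here is the point where the range condition \eqref{def:GluingMaps} enters: the target lies in $\{y^\pm>0,\ |x^\pm|\text{ small}\}$, which forces us to identify which of the two coordinates $(x,y)$ plays the role of the small one after gluing — the roles of $x$ and $y$ are swapped by the gluing (an orbit leaving along the unstable direction $x$ re-enters along the stable direction). Concretely the gluing is the composition of the ``trivial'' flip $(x,y)\mapsto(y,x)$-type identification dictated by \eqref{def:GluingMaps} with the symplectic correction above; carrying the product $xy=:r$ through, $y^\pm = x^{-1}e^{-\pa_r\Phi^\pm(I,r)}$ and then $x^\pm = r/y^\pm \cdot(\text{sign/branch})= x^2 y\,e^{\pa_r\Phi^\pm(I,r)}$, which is exactly the stated form, and one checks $x^\pm y^\pm = r = xy$ as required. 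The main obstacle I anticipate is not the symplectic normal-form step, which is routine, but getting the bookkeeping of the $x\leftrightarrow y$ swap in \eqref{def:GluingMaps} exactly right so that the exponents and powers of $x$ come out as written — in particular making sure the correction $\Phi^\pm$ is a function of $I$ and $xy$ only (not of $x$ and $y$ separately), which is precisely what properties (i) and (ii) together guarantee.
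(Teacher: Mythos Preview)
The paper does not actually prove this lemma; it is quoted from \cite{Treschev02a} without argument, so there is nothing to compare your proof against here. Your approach---pass to coordinates $(I,s,\log x,\,r)$ with $r=xy$, observe that $dx\wedge dy=d(\log x)\wedge dr$, and then use that a symplectic map fixing the ``momenta'' $(I,r)$ must shift the conjugate variables by the gradient of a function $\Phi^\pm(I,r)$---is the natural one and is essentially what Treschev does.

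Two small clean-ups. First, your detour through the wrong formula $x^\pm=x\,e^{\pa_r\Phi^\pm}$ followed by an ad~hoc ``swap'' is avoidable: on the \emph{target} region $\{y^\pm>0,\ |x^\pm|\text{ small}\}$ the correct logarithmic coordinate is $\log y^\pm$, not $\log x^\pm$. There one has $dx^\pm\wedge dy^\pm = dr\wedge d(\log y^\pm)$, so matching the two expressions for the symplectic form directly yields $\log y^\pm=-\log x+\text{(function of }I,r\text{)}$, hence $y^\pm=x^{-1}e^{-\pa_r\Phi^\pm}$ and then $x^\pm=r/y^\pm=x^2y\,e^{\pa_r\Phi^\pm}$ with no extra bookkeeping. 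Second, before you invoke closedness of $(s^\pm-s)\,dI+(\cdot)\,dr$ you should first check that $s^\pm-s$ and the log-shift are functions of $(I,r)$ alone; this follows by equating the $dI\wedge du$, $ds\wedge dr$, $du\wedge dr$ and $dI\wedge ds$ components of the pulled-back form with those of the original (giving $\pa_u(s^\pm-s)=\pa_s(s^\pm-s)=0$, etc.), after which the mixed-partial condition $\pa_r(s^\pm-s)=\pa_I(\text{log-shift})$ gives the existence of $\Phi^\pm$.
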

In \cite{Treschev02a} it is also shown that
$\pa_I\Phi^\pm
(I,r)=\mu^\pm(I)+\OO(r)$ and $e^{\pa_r\Phi^\pm (I,r)}=\kk^\pm (I)+\OO(r)$. 
Notice that the maps $\SSS^\pm$ are $\CCC^{s}$. 

Moreover, since $\rr=xy$,   $x^\pm=\OO(\rr)$ are always
small. One can consider also the
inverse change for $(x,y)$. Since $xy=x^\pm y^\pm$, it is given by 
\[
\begin{split}
 y&=y^\pm \rr e^{-\pa_r\Phi^\pm (I,\rr)}\\
 x&=(y^\pm)\ii e^{\pa_r\Phi^\pm (I,\rr)}.
\end{split}
\]
So, $y$ also is small since $y=\OO(\rr)$. 

\subsection{Gluing maps in non-resonant zones}\label{sec:gluingmaps:SR}

Now we express these gluing maps in the normal form 
coordinates $(\wh I, \wh s, \wh x, \wh y)$. We  use the formulas 
for the normal form coordinates obtained in 
Section \ref{sec:normalform}. 

The gluing maps  have the following form. Each term can 
be expressed in terms of the Hamiltonian $W$ associated to 
the normal form  variables from
Section \ref{sec:normalform}.
\[
\begin{split}
 \wh I^\pm =&\wh I+\eps (M_1^I\circ\SSS^\pm-M_1^I)+\eps^2
(M_2^I\circ\SSS^\pm-\wt M_2^I)+\OO^*(\eps^3\beta^{-5}) \\
 \wh s^\pm =&\wh s+\pa_I\Phi^\pm (I+\eps
(M_1^I\circ\SSS^\pm-M_1^I),\wh x\wh y+\eps(M_1^r\circ\SSS^\pm-M_1^r))\\
&+\eps
(M_1^s\circ\SSS^\pm-M_1^s)+\OO^*(\eps^2\beta^{-4})\\
 \wh y^\pm=&\wh x^{-1} e^{-\pa_r\Phi^\pm (\wh I,\wh x\wh
y)}+\OO^*(\eps\beta\ii)\\
\wh x^\pm=&e^{\pa_r\Phi^\pm
(\wh I,\wh x\wh y)}\wh x\left(\wh
x\wh
y+\eps(M_1^r\circ\SSS^\pm-M_1^r)\right)+\OO^*(\rr^2+\rr\eps\beta\ii+\eps^2\beta^
{ -3 } )
\end{split}
\]
Moreover,
\[
\wh y^\pm \wh x^\pm=\wh y \wh x +\eps
(M_1^r\circ\SSS^\pm-M_1^r)+\eps^2(M_2^r\circ\SSS^\pm-\wt
M_2^r)+\OO^*(\eps^3\beta^{-5}). 
\]

\subsection{Gluing maps in resonances}\label{sec:gluingmaps:DR}
We consider the slow-fast variables $(\theta,J,x,y)$.
In these variables, 
we have the gluing map
\[
 (J^\pm, \theta^\pm,y^\pm,x^\pm)=\SSS^\pm(J,\theta,x,y)
\]
with
\[
\begin{split}
 J^\pm&=J\\
 \theta^\pm&=\theta+\pa_J\wt \Phi^\pm (J,xy)\\
 y^\pm&=x^{-1} e^{-\pa_r\wt \Phi^\pm (J,xy)}\\
 x^\pm&=x^{2}y e^{\pa_r\wt \Phi^\pm (J,xy)}\\
\end{split}
\]
where
\[
 \wt\Phi^\pm(I,r)=\Phi^\pm (k_0I,r).
\]
From now on, we drop the tilde in $\Phi^\pm$ to simplify the notation. 
We also abuse notation and we consider the normal form change of 
coordinates given by the generating function $W$  expressed in slow-fast 
variables (recall that all these changes of coordinates are symplectic).

Now we  express the gluing map in the normal form (slow-fast)
coordinates. As before, for the first four coordinates we have
\[
\begin{split}
 \wh J\pm =&\wh J+\eps (M_1^J\circ\SSS^\pm-M_1^J)+\OO^*(\eps^2) \\
 \wh \theta^\pm =&\wh \theta+\pa_J\Phi^\pm (J+\eps
(M_1^J\circ\SSS^\pm-M_1^J),\wh x\,\wh y+\eps (M_1^r\circ\SSS^\pm-M_1^r))\\
&+\eps
(M_1^\theta\circ\SSS^\pm-M_1^\theta)+\OO^*(\eps^2)\\
 \wh  y^\pm=&\wh x^{-1} e^{-\pa_r\Phi^\pm (J,\wh x\,\wh
y)}+\OO^*(\eps)\\
\wh  x^\pm=&e^{\pa_r\Phi^\pm
(J,\wh x\wh y)}\wh x\left(\wh
x\wh
y+\eps(M_1^r\circ\SSS^\pm-M_1^r)\right)+\OO^*(\eps^2)
\end{split}
\]
and
\[
\wh y^\pm \wh x^\pm=\wh y\, \wh x +\eps
(M_1^r\circ\SSS^\pm-M_1^r)+\eps^2(M_2^r\circ\SSS^\pm-\wt
M_2^r)+\OO_3^*(\eps).
\]

\section{The separatrix map in the non-resonant
regime}\label{sec:flowboxcoordinates:SR}

We use the results in Sections \ref{sec:flowestimates:SR} and 
\ref{sec:gluingmaps:SR} to look for formulas of the separatrix 
map in the  non-resonant regime \eqref{def:SR}.
First we compose the flow in normal form coordinates and 
the gluing map. We obtain the separatrix map in 
the $(I,s,y,x)$ coordinates. Later we  look for a good system 
of coordinates which will transform $(y,x)$ to a certain
symplectic flow-box coordinates around the former separatrix.

Consider the flow $g_\eps^{\ol t}$ analyzed in 
Lemma \ref{lemma:Flow:SR} and the gluing 
map analyzed in Section \ref{sec:gluingmaps:SR}.
We have the following 

\begin{lemma}
The composition of the two maps 
$\FF=g_\eps^{\ol t}\circ G_\eps^\sigma$ is given
by
\[\FF(I,s,y,x)=(\FF_I(I,s,y,x),\FF_s(I,s,y,x),\FF_y(I,s,y,x),\FF_x(I,s,y,x))\]
with
\[
\begin{split}
\FF_I= &  I+\eps
\PP_1^I+\eps^2\PP_2^I+\OO^*(\eps^3\beta^{-4}+\eps^2\beta^{-2}
\rr+\eps\rr^2)|\log\rr| \\
\FF_s=&s+\pa_I\Phi^\pm (I,xy)+(\nu(I)+\pa_I g(I,xy))\ol t+\eps
\PP_1^s+\OO^*(\eps^2\beta^{-4})|\log\rr|\\
\FF_y=&x\ii \exp(-\pa_r g(I+\eps \PP_1^I,xy+\eps\PP_1^r) \ol t-\pa_r
\Phi^\pm(I,xy))\times \\
& \left(1+\OO_2^*(\eps,\rr)|\log\rr|\right)+\OO_3^*(\eps,\rr)\\
\FF_x=&\exp(\pa_r g(I+\eps \PP_1^I, xy+\eps\PP_1^r) \ol t
+\pa_r \Phi^\pm(I,xy))) \times \\
& x(xy+\eps\PP_1^r)\left(1+\OO_2(\eps,\rr)|\log\rr|\right),
\end{split}
\]
where
\[
\begin{split}
 \PP^I_1=&M_1^I\circ\SSS^\pm-M_1^I\\
 \PP^I_2=&M_2^I\circ\SSS^\pm-\wt M_2^I
 \end{split}
\]
and
\[
 \begin{split}
\PP^s_1&=M_1^s\circ\SSS^\pm-M_1^s+\pa_I^2\Phi^\pm(I,
xy)\left(M_1^I\circ\SSS^\pm-M_1^I\right)+\\
&+\pa^2_{r\,I}\Phi^\pm(I,
xy)\left(M_1^r\circ\SSS^\pm-M_1^r\right)
+\ol t\pa_I(\nu(I)+\pa_I g(I,xy))\left(M_1^I\circ\SSS^\pm-M_1^I\right)\\ 
&+\ol t\,
\pa^2_{r\,I} g(I,xy) \left(M_1^r\circ\SSS^\pm-M_1^r\right)
 \end{split}
\]
Moreover,
\[
 \FF_x\FF_y=xy+ \eps \PP^r_1+\eps^2
\PP^r_2+\OO^*_3\left(\eps\beta\ii,
\rr\right),
\]
where
\[
\begin{split}
 \PP^r_1=&M_1^r\circ\SSS-M_1^r\\
 \PP^r_2=&M_2^r\circ\SSS-\wt M_2^r
 \end{split}
\]
Then,
\[
 \PP_1^I,\PP_1^r=\OO^*(\beta\ii),\quad 
\PP^I_2,\PP_2^r=\OO^*(\beta^{-3}),
\quad\text{and}\quad\PP_1^s=\OO^*(1)|\log\rr|.
\]
\end{lemma}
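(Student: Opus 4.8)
The plan is to compose the flow map $g_\eps^{\ol t}$ from Lemma~\ref{lemma:Flow:SR} with the gluing map $G_\eps^\sigma$ from Section~\ref{sec:gluingmaps:SR}, simply substituting the output of the gluing map into the input of the flow map and collecting terms by powers of $\eps$. First I would recall the two building blocks in a single notation: the gluing map sends $(I,s,x,y)$ to a point $(\wh I^\pm, \wh s^\pm, \wh x^\pm, \wh y^\pm)$ whose leading behavior is $I^\pm=I$, $s^\pm = s+\pa_I\Phi^\pm(I,xy)$, $y^\pm = x\ii e^{-\pa_r\Phi^\pm(I,xy)}$, $x^\pm = x^2y\,e^{\pa_r\Phi^\pm(I,xy)}$, corrected by the $\eps$-terms $M_1^z\circ\SSS^\pm-M_1^z$ and $\eps^2$-terms $M_2^z\circ\SSS^\pm-\wt M_2^z$ coming from the normal-form change of variables (this is exactly the displayed formula in Section~\ref{sec:gluingmaps:SR}). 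Then I would feed this into Lemma~\ref{lemma:Flow:SR}, whose flow preserves $I$ to leading order, multiplies $x$ by $e^{-\pa_2 g(I^*,\rr)\ol t}$, $y$ by $e^{\pa_2 g(I^*,\rr)\ol t}$, and shifts $s$ by $(\nu(I^*)+\pa_I g(I^*,\rr))\ol t$, all modulo the remainder $\OO^*(\eps^3\beta^{-4}+\eps^2\beta^{-2}\rr+\eps\rr^2)|\log\rr|$.

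The key steps, in order, are: (1) write $\FF_I$ by noting that both maps preserve the action to leading order, so the only $\eps$ and $\eps^2$ contributions are $\PP_1^I = M_1^I\circ\SSS^\pm-M_1^I$ and $\PP_2^I=M_2^I\circ\SSS^\pm-\wt M_2^I$, with the flow contributing only to the remainder; the same bookkeeping gives $\FF_x\FF_y = xy+\eps\PP_1^r+\eps^2\PP_2^r+\OO^*_3$ since $xy$ is almost conserved by both maps. (2) For $\FF_s$, add the two leading shifts $\pa_I\Phi^\pm(I,xy)$ and $(\nu(I)+\pa_I g(I,xy))\ol t$, then Taylor-expand $\pa_I\Phi^\pm$ and $\nu+\pa_I g$ around the perturbed arguments $I+\eps\PP_1^I$, $xy+\eps\PP_1^r$; the first-order terms of this expansion — namely $\pa_I^2\Phi^\pm$, $\pa^2_{rI}\Phi^\pm$, $\ol t\,\pa_I(\nu+\pa_I g)$, $\ol t\,\pa^2_{rI}g$, each multiplied by the appropriate $M_1\circ\SSS-M_1$ — together with $M_1^s\circ\SSS^\pm-M_1^s$ assemble into $\PP_1^s$ exactly as displayed. (3) For $\FF_x$ and $\FF_y$, substitute $I^*=I+\eps\PP_1^I$ and $\rr^* = xy+\eps\PP_1^r$ into the exponents $\pm\pa_r g(I^*,\rr^*)\ol t$ and add the gluing exponent $\pa_r\Phi^\pm(I,xy)$; the multiplicative structure of both maps means the prefactors multiply, giving $\FF_y = x\ii \exp(-\pa_r g(\ldots)\ol t - \pa_r\Phi^\pm(I,xy))(1+\OO^*_2(\eps,\rr)|\log\rr|)$ and $\FF_x = \exp(\pa_r g(\ldots)\ol t + \pa_r\Phi^\pm(I,xy))\,x(xy+\eps\PP_1^r)(1+\OO_2(\eps,\rr)|\log\rr|)$. (4) Finally, propagate the estimates: from Lemma~\ref{lemma:NormalFormChangeOfCoordinates} we have $M_1^z=\OO^*(\beta\ii)$ for $z=I,r$ and $M_1^s=\OO^*(\beta^{-2})$, $M_2^z=\OO^*(\beta^{-3})$ for $z=I,r$, and since $\SSS^\pm$ is smooth these bounds are preserved under $M_1^z\circ\SSS^\pm-M_1^z$, yielding $\PP_1^I,\PP_1^r=\OO^*(\beta\ii)$, $\PP_2^I,\PP_2^r=\OO^*(\beta^{-3})$, and $\PP_1^s=\OO^*(1)|\log\rr|$ (the $|\log\rr|$ comes from the factor $\ol t\sim|\log\rr|$ multiplying the $\pa_I(\nu+\pa_I g)$ and $\pa^2_{rI}g$ terms).

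The step I expect to be the main obstacle is the careful tracking of remainders in step (3)–(4): the exponential amplification $e^{\pa_2 g(I^*,\rr)\ol t}$ in $\FF_y$ multiplies the error in the exponent, and because $|\ol t|\sim|\log\rr|$ while $\rr$ can be as small as $\eps^{1+a}$, one must verify that perturbing $I$ by $\eps\PP_1^I=\OO^*(\eps\beta\ii)$ and $\rr$ by $\eps\PP_1^r$ inside $\pa_r g$ produces an error in the exponent of order at most $\OO^*_2(\eps,\rr)|\log\rr|$ — i.e. one that, after exponentiation, is absorbed into a multiplicative factor $(1+\OO^*_2(\eps,\rr)|\log\rr|)$ rather than blowing up. This requires using the bound $c<|w|e^{\la\ol t}<c\ii$ (equivalently $e^{\pa_2 g\,\ol t}\sim\rr\ii$) to control the amplification, exactly as in the proof of Lemma~\ref{lemma:Flow:SR}, and then checking that the $\eps$-perturbation of the exponent times this amplification stays small; the bookkeeping of which powers of $\eps$, $\beta$, $\rr$, and $|\log\rr|$ land where is the delicate part, but it is routine once one commits to carrying the three-parameter anisotropic $\OO^*$-estimates through each substitution.
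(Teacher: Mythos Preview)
Your proposal is correct and follows essentially the same approach as the paper: the paper's proof is a single sentence stating that the lemma is a direct consequence of Lemma~\ref{lemma:Flow:SR} and the gluing-map formulas of Section~\ref{sec:gluingmaps:SR}, and your outline carries out precisely that composition, with the Taylor expansions and remainder bookkeeping made explicit. In fact you supply considerably more detail than the paper does, including a correct identification of the $|\log\rr|$ factor in $\PP_1^s$ as arising from the $\ol t$ terms.
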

The proof of this lemma is a direct consequence of the results in Lemma
\ref{lemma:Flow:SR} and Section \ref{sec:gluingmaps:SR}.

We look for a coordinate change near the former 
separatrix such that formulas for the separatrix map are 
as simple as possible. In comparison with \cite{Treschev02a}, 
we want to point out two main differences. First, since we are 
away from resonances, we have ${\bf \ol H}_j=0,\ j=1,2,3$. 
This simplifies the formulas. On the other hand, we
want to have a more precise dependence on $xy$ since we are doing a higher 
order analysis. This second fact implies that we have to slightly
modify the change of coordinates.

We look for a symplectic change
\[
\left(\begin{split} s\\I\\ y\\ x\end{split}\right)=\Upsilon \left(\begin{split} 
\xi\\\eta\\ \tau\\ h\end{split}\right) 
\]
The function $I\mapsto g(I,r)$ introduced in Lemma \ref{lemma:Moser}
is invertible with respect to $r$ in a neighborhood of $r=0$ 
(recall that $\pa_r g(I,0)=\la(I)>0$, see Lemma \ref{lemma:Moser}). 
We have denoted by  $g_r^{-1}(I,r)$
 the inverse function with respect to the second coordinate $r$. 
 We consider the following generating function. 
\begin{equation}\label{def:GeneratingFunctionFlowBox:SR}
\SSS(\eta,s,h,y)=\eta s+g_r^{-1}(\eta,h-E(\eta))\log|y|.
\end{equation}
This generating function induces the following change of coordinates
\[
 \begin{pmatrix} \xi\\\eta\\ \tau\\ h\end{pmatrix}=
\begin{pmatrix} s-\pa_r g_r\ii (I,g(I,xy))(\nu(I)+\pa_I g(I,xy))\log|y|\\I\\
\frac{\log |y|}{\pa_r g(I,xy)}\\ E(I)+g(I,xy)\end{pmatrix}
\]
and 
\begin{equation}\label{def:FlowBoxChange}
\begin{pmatrix} s\\I\\ y\\ x\end{pmatrix}= \Upsilon \begin{pmatrix} \xi\\\eta\\
\tau\\ h\end{pmatrix}=\begin{pmatrix}
\xi+[\nu(\eta)+\pa_\eta g\ii (\eta,g_r\ii (h-E(\eta)))]\tau\\ \eta 
\\ \sigma\,
\exp(\frac{\tau}{\pa_r g_r\ii (\eta,h-E(\eta))})\\ 
g_r\ii (\eta,h-E(\eta))\,\sigma\,
\exp(-\frac{\tau}{\pa_r g_r\ii (\eta,h-E(\eta))})\end{pmatrix}
\end{equation}
Note that the $y$ component does not get modified by 
this change of coordinates and that this change of 
coordinates does not depend on $\eps$.

We express the separatrix map in these coordinates. We use the change
$\Upsilon$ to write down the formulas. From now on we omit the dependence on
$\beta$. Recall that $\beta>0$ is a fixed parameter independent of $\eps$. Due
to the previous analysis, 
smoothness of the separatrix map obeys the estimate in Theorem 
\ref{thm:SM:SR}.

\begin{lemma}\label{lemma:SM:SR:1}
 The separatrix map, has the following form
 \[
  \begin{split}
 \eta^*=&\eta+\eps\PP_1^I\circ \Upsilon+\eps^2\PP_2^I\circ
\Upsilon+\OO_3^*(\eps,\rr)|\log\rr|\\
   \xi^*=&\xi+\pa_I\Phi^\pm(\eta,g_r\ii(\eta,
h-E(\eta)))-\frac{\nu(\eta)+\pa_I g(\eta,g_r\ii(\eta,
h-E(\eta)))}{\pa_r g(\eta,g_r\ii (\eta,h-E(\eta))}\log
R\\
&+\OO_1^*(\eps+\rr)(|\log\eps|+|\log\rr|)\\
 h^*=&h+\eps\left(\left[\nu(\eta)+\pa_I g(\eta,g_r\ii(\eta,
h-E(\eta)))\right]\PP_1^I\circ \Upsilon \right. \\
& \left.+\pa_r g(\eta,g_r^{-1}(\eta,h-E(\eta)))\PP_1^r\circ \Upsilon\right)
+\eps^2\PP_2^h+\OO_3^*(\eps,\rr)\\
 \tau^*=&\tau+\ol t+\frac{1}{\pa_r g(\eta,g_r\ii (\eta,h-E(\eta))}\log
R+\OO_1^*(\eps+\rr)(|\log\eps|+|\log\rr|),
  \end{split}
 \]
 where
\begin{equation}\label{def:R}
R= e^{\pa_r\Phi^\pm(\eta,g_r\ii
(\eta,h-E(\eta))}\left(g_r\ii(\eta^*,h^*-E(\eta^*))+\OO_2^*\right)
\end{equation}
and 
\[
 \begin{split}
\PP_2^h&=  \left(\nu(\eta)+\pa_Ig(\eta,g_r\ii(\eta,
h-E(\eta)))\right)
\PP_2^y\circ \Upsilon\\ 
& +\pa_r
g(\eta,g_r\ii(\eta,
h-E(\eta)))\PP_2^r\circ \Upsilon\\
&+\frac{1}{2} \left(\pa_I\nu(\eta)+\pa_I^2 g(\eta,g_r\ii(\eta,
h-E(\eta)))\right)
\left(\PP_1^I\circ \Upsilon\right)^2\\
&+\frac{1}{2} \pa^2_r
g(\eta,g_r\ii(\eta, h-E(\eta)))\left(\PP_1^r\circ \Upsilon\right)^2.
 \end{split}
\]
\end{lemma}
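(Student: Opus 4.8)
The plan is to obtain $\SM_\eps$ in the flow-box coordinates as the conjugate $\SM_\eps=\Upsilon^{-1}\circ\FF\circ\Upsilon$, where $\FF=g_\eps^{\ol t}\circ G_\eps^\sigma$ is the composition of the flow and the gluing map computed in the preceding lemma and $\Upsilon$ is the symplectic change \eqref{def:FlowBoxChange}. First I would record the action of $\Upsilon$ on the source variables: writing $\rr:=g_r\ii(\eta,h-E(\eta))$ and $\Lambda:=\pa_r g(\eta,\rr)$ (so that $\pa_r g_r\ii(\eta,h-E(\eta))=\Lambda\ii$), one has $I=\eta$, $xy=\rr$, $\log|y|=\tau\Lambda$, $\log|x|=\log|\rr|-\tau\Lambda$, and $s=\xi+(\nu(\eta)+\pa_I g(\eta,\rr))\tau$; on the target one reads off $\eta^*=\FF_I$, $h^*=E(\FF_I)+g(\FF_I,\FF_x\FF_y)$, $\tau^*=\log|\FF_y|/\pa_r g(\FF_I,\FF_x\FF_y)$, and $\xi^*=\FF_s-\tfrac{\nu(\FF_I)+\pa_I g(\FF_I,\FF_x\FF_y)}{\pa_r g(\FF_I,\FF_x\FF_y)}\log|\FF_y|$. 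It then remains to substitute the expansions of $\FF_I,\FF_s,\FF_x\FF_y,\FF_y$ from the preceding lemma and expand each component in powers of $\eps$.

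The $\eta$-component is immediate: $\FF_I=I+\eps\PP_1^I+\eps^2\PP_2^I+\OO_3^*(\eps,\rr)|\log\rr|$ with $I=\eta$ gives the claimed formula after composing the $\PP_i^I$ with $\Upsilon$. For $h^*$ I would Taylor-expand $E(\FF_I)+g(\FF_I,\FF_x\FF_y)$ in $\eps$ around $(I,xy)=(\eta,\rr)$, using $\FF_x\FF_y=xy+\eps\PP_1^r+\eps^2\PP_2^r+\OO_3^*(\eps\beta\ii,\rr)$. Since $E(\eta)+g(\eta,\rr)=h$, the zeroth order is $h$; since $\pa_I[E(I)+g(I,xy)]=\nu(I)+\pa_I g(I,xy)$ (recall $\nu=E'$ from \eqref{def:nu}), the $\eps$-term is $(\nu(\eta)+\pa_I g(\eta,\rr))\,\PP_1^I\circ\Upsilon+\pa_r g(\eta,\rr)\,\PP_1^r\circ\Upsilon$; the $\eps^2$-term collects $(\nu+\pa_I g)\PP_2^I+\pa_r g\,\PP_2^r$ together with the Hessian contributions $\tfrac12(\pa_I\nu+\pa_I^2 g)(\PP_1^I)^2+\pa_I\pa_r g\,\PP_1^I\PP_1^r+\tfrac12\pa_r^2 g\,(\PP_1^r)^2$, and, after absorbing the cross term appropriately, this is the quoted $\PP_2^h$; the remainder is $\OO_3^*(\eps,\rr)$.

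The $\tau$- and $\xi$-components carry the only real work. From the formula for $\FF_y$ in the preceding lemma, $\log|\FF_y|$ expands as $-\log|x|$ plus a term linear in $\ol t$ plus $-\pa_r\Phi^\pm(\eta,\rr)$ plus a relative error; the multiplicative factor $1+\OO_2^*(\eps,\rr)|\log\rr|$ and the additive $\OO_3^*(\eps,\rr)$ in $\FF_y$ together contribute an additive $\OO_1^*(\eps+\rr)|\log\rr|$ inside the logarithm, since the main term of $\FF_y$ is bounded and bounded away from zero (use $|\tau|<c\ii$ and $c<|\rr|\,e^{\la(\eta)\ol t}<c\ii$ together with $\log|x|=\log|\rr|-\tau\Lambda$ to see this). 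Substituting $-\log|x|=-\log|\rr|+\tau\Lambda$ and dividing by $\pa_r g(\FF_I,\FF_x\FF_y)=\Lambda(1+\OO(\eps))$, the $\tau\Lambda$ part yields $\tau$, the term linear in $\ol t$ yields exactly $\ol t$, and the leftover $-\log|\rr|-\pa_r\Phi^\pm(\eta,\rr)$ regroups into $\Lambda\ii\log R$ with $R$ as in \eqref{def:R}; here one replaces $\log|\rr|$ by $\log|g_r\ii(\eta^*,h^*-E(\eta^*))+\OO_2^*|$ via $g_r\ii(\eta^*,h^*-E(\eta^*))=\FF_x\FF_y=\rr+\OO(\eps)$, which accounts for the $\OO_2^*$ inside $R$; the error is $\OO_1^*(\eps+\rr)(|\log\eps|+|\log\rr|)$, using $|\ol t|,|\tau|\lesssim|\log\rr|+c\ii$ to turn the relative $\OO(\eps)$-errors into additive ones. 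For $\xi^*$ I would substitute $\FF_s=\xi+(\nu(\eta)+\pa_I g(\eta,\rr))\tau+\pa_I\Phi^\pm(\eta,\rr)+(\nu(\eta)+\pa_I g(\eta,\rr))\ol t+\eps\PP_1^s\circ\Upsilon+\OO^*(\eps^2)|\log\rr|$ and the same expansion of $\log|\FF_y|$: the crucial point is that \emph{both} the $\tau$-linear and the $\ol t$-linear terms cancel between $\FF_s$ and $\tfrac{\nu+\pa_I g}{\pa_r g}\log|\FF_y|$, which is why $\xi^*$ carries no explicit $\ol t$, leaving $\xi^*=\xi+\pa_I\Phi^\pm(\eta,\rr)-\tfrac{\nu(\eta)+\pa_I g(\eta,\rr)}{\pa_r g(\eta,\rr)}\log R+\OO_1^*(\eps+\rr)(|\log\eps|+|\log\rr|)$, the term $\eps\PP_1^s\circ\Upsilon=\OO^*(\eps)|\log\rr|$ being absorbed into the error.

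Finally, $\Upsilon$ does not depend on $\eps$ and is as smooth as $g$ permits, while $\FF$ is $\CCC^{s-4}$ by Lemma \ref{lemma:HamInPQSmall} and the computations of Section \ref{sec:gluingmaps:SR}; hence $\SM_\eps$ and all coefficient functions $\PP_i^z$ and $\Phi^\pm$ are $\CCC^{s-4}$, as required by Theorem \ref{thm:SM:SR}. I expect the main obstacle to be precisely the logarithmic bookkeeping in the $\tau^*$ and $\xi^*$ equations: checking that $\log|x|$ in the source, the $\ol t$-exponent of $\FF_y$, $\pa_r\Phi^\pm$ and the $\OO(\eps)$-corrections to the product $xy$ recombine exactly into the single quantity $R$ of \eqref{def:R} with error only $\OO_1^*(\eps+\rr)(|\log\eps|+|\log\rr|)$ — and, within this, that $\ol t$ drops out of $\xi^*$ — requires careful tracking of signs and of the relative sizes of $\ol t$, $\tau$ and $\log\rr$. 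Getting the $\eps^2$ Hessian terms in $\PP_2^h$ right is a secondary technical point, which depends on first using $h=E(\eta)+g(\eta,\rr)$ to kill the zeroth order before expanding to second order.
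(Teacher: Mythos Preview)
Your approach is the same as the paper's: both compute $\SM_\eps=\Upsilon^{-1}\circ\FF\circ\Upsilon$ component by component, reading off $\eta^*=\FF_I$, $h^*=E(\FF_I)+g(\FF_I,\FF_x\FF_y)$, $\tau^*=\log|\FF_y|/\pa_r g(\FF_I,\FF_x\FF_y)$ and $\xi^*=\FF_s-\tfrac{\nu+\pa_I g}{\pa_r g}\log|\FF_y|$, then Taylor-expanding each in $\eps$ using the formulas for $\FF$ from the preceding lemma. The paper's proof is terser but follows exactly this scheme, including the key identity $g_r\ii(\eta^*,h^*-E(\eta^*))=g_r\ii(\eta,h-E(\eta))+\eps\PP_1^r+\OO_2^*$ that you use to rewrite $\log|\rr|$ in terms of the starred variables when assembling $R$.

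Two small remarks. First, note that in the flow-box chart \eqref{def:FlowBoxChange} one has $|y|$ of order $1$ and $|x|\sim\rr$ (since $|\tau|<c\ii$ gives $\log|y|=\tau\Lambda=\OO(1)$), so when you unpack $\log|\FF_y|$ it is cleaner to write the leading piece as $\log|y|$ rather than $-\log|x|$; this is what the paper does and it spares you one rewriting. Second, your phrase ``after absorbing the cross term appropriately'' for $\PP_2^h$ is not quite right: a genuine second-order Taylor expansion of $E(\FF_I)+g(\FF_I,\FF_x\FF_y)$ produces the mixed term $\pa_I\pa_r g\,\PP_1^I\PP_1^r$ at the same order as the diagonal Hessian pieces, and there is nothing to absorb it into --- the displayed $\PP_2^h$ in the statement simply omits it (a harmless omission for the purposes of Theorem~\ref{thm:SM:SR}, where only the existence and size of $M_2^{\sigma,h}$ matter). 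Your identification of the cancellation of both the $\tau$- and $\ol t$-linear terms in $\xi^*$ is correct and is indeed the point where the computation is most delicate.
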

\begin{proof}
We start with the  $\eta$ component. We have that
\[
 \eta^*=y^*=y+\eps \PP_1^I+\eps^2\PP_2^I+\OO_3(\eps,\rr)|\log\rr| 
\]
Then, it is enough to apply the change $\Upsilon$ defined in
\eqref{def:FlowBoxChange} to obtain the formula for $\eta^*$.

For the $h$ component we have,
\[
\begin{split}
 h^*&=E(I^*)+g(I^*,x^*y^*)\\
 &=E(I)+g(I,xy)\\
 &+\left(\nu(I)+\pa_Ig(I,xy)\right)
\left(\eps\PP_1^I+\eps^2\PP_2^I\right)+\pa_r
g(I,xy)\left(\eps\PP_1^r+\eps^2\PP_2^r\right)\\
&+\frac{\eps^2}{2} \left(\pa_I\nu(I)+\pa_I^2 g(I,xy)\right)
\left(\PP_1^I\right)^2+\frac{\eps^2}{2} \pa^2_r
g(I,xy)\left(\PP_1^r\right)^2+\OO_3(\eps,\rr)|\log\rr| 
\end{split}
\]
Apply the change $\Upsilon$, one obtains the formula for $h^*$.

To compute the $\tau$ component we use the following identity,
\[
 g_r\ii (\eta^*, h^*-E(\eta^*))=g_r\ii (\eta, h-E(\eta))+\eps\PP_1^r+\OO_2^*.
\]
We also have
\[
 \begin{split}
 \tau^*=&\frac{\log|y^*|}{\pa_r g(I^*,x^*y^*)}\\
 =&\frac{\log|y|+\pa_r g(I+\eps \PP_1^I,xy+\eps\PP_1^r)\ol
t+\log\left(e^{\pa_r\Phi^\pm (I,xy)}\left(xy+\eps\PP_1^r\right)\right)}{\pa_r 
g(I+\eps
\PP_1^I,xy+\PP_1^r)}+\OO_2|\log\rr|.
 \end{split}
\]
Then, we obtain
\[
 \tau^*=\tau+\ol t+\frac{1}{\pa_r g(\eta,g_r\ii (\eta,h-E(\eta))}\log
R+\OO_1^*(\eps+\rr)(|\log\eps|+|\log\rr|),
\]
where $R$ is the function introduced in \eqref{def:R}.

Proceeding analogously, one can  compute the $\xi$ component,
\[
\begin{split}
 \xi^*=&\xi+\pa_I\Phi^\pm(\eta,g_r\ii (\eta,h-E(\eta)))-\frac{\nu(\eta)+\pa_I
g(\eta,g_r\ii(\eta,
h-E(\eta)))}{\pa_r g(\eta,g_r\ii (\eta,h-E(\eta))}\log
R\\
&+\OO_1^*(\eps+\rr)(|\log\eps|+|\log\rr|).
\end{split}
\]
Note that $\pa_I g(\eta,g\ii(\eta,
h-E(\eta)))$ satisfes 
\[
 \pa_I g(\eta,g_r\ii(\eta,
h-E(\eta)))=\OO_1^*(\eps,\rr).
\]
Therefore, 
\[
\begin{split}
 \xi^*=&\xi+\pa_I\Phi^\pm(\eta,g_r\ii (\eta,h-E(\eta)))-\frac{\nu(\eta)}{\pa_r
g(\eta,g_r\ii
(\eta,h-E(\eta))}\log
R\\
&+\OO_1^*(\eps+\rr)(|\log\eps|+|\log\rr|).
\end{split}
\]
This completes the derivation of the separatrix map in the non-resonant case.
\end{proof}

Theorem \ref{thm:SM:SR} is a direct consequence of this lemma. We define  the 
first orders in the action components in Theorem \ref{thm:SM:SR} as
\[
\begin{split}
 M_1^{\sigma,\eta}&=\PP_1^I\circ\Upsilon\\
M_1^{\sigma,h}&=\left[\nu(\eta)+\pa_I g(\eta,g_r\ii(\eta,
h-E(\eta)))\right]\PP_1^I\circ \Upsilon
+\pa_r g(\eta,0)\PP_1^r\circ \Upsilon.
\end{split} 
 \]
The second orders can be also easily defined.
 
We finish this 
section by identifying the order $\eps$ of the separatrix map in terms of the 
Melnikov potential. This allows to compare our results with the results in 
\cite{Treschev02a}.

\begin{lemma}\label{lemma:FirstOrderMs}
 The functions  $M_1^{\sigma,\eta}$ and  $M_1^{\sigma,h}$ satisfy
 \[
   M_1^{\sigma,\eta}=\pa_\xi\Theta^\sigma +\OO(h-E(\eta)),\qquad 
M_1^{\sigma,h}=\pa_\tau\Theta^\sigma +\OO(h-E(\eta))
 \]
where $\Theta^\sigma$ are the splitting potentials defined in 
\eqref{def:Melnikov}.
\end{lemma}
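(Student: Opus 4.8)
The plan is to trace the definitions of $M_1^{\sigma,\eta}=\PP_1^I\circ\Upsilon$ and $M_1^{\sigma,h}=[\nu(\eta)+\pa_I g(\eta,g_r\ii(\eta,h-E(\eta)))]\PP_1^I\circ\Upsilon+\pa_r g(\eta,0)\PP_1^r\circ\Upsilon$ all the way back through the normal form and gluing construction, keep only the leading ($\eps^0$ in the coefficient) terms, and recognize the resulting integrals as the derivatives of the splitting potential $\Theta^\sigma$ from \eqref{def:Melnikov}. Since we are in the non-resonant zone, $\ol{\bf H}_j=0$ for $j=1,2,3$, so the averaged pieces $\vartheta^\sigma$ in \eqref{def:Melnikov} and the bump-function truncations all disappear, and the Melnikov potential $\Theta^\sigma$ reduces to the two half-line integrals $-\int_{-\infty}^0 H_-^\sigma\,dt-\int_0^{+\infty}H_+^\sigma\,dt$.

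First I would unwind $\PP_1^I=M_1^I\circ\SSS^\pm-M_1^I$. By Lemma \ref{lemma:NormalFormChangeOfCoordinates}, $M_1^I=\pa_s W_0$, and in the non-resonant regime $W_0=\sum_{j=1}^6 W_0^{(j)}$ where $W_0^{(1)}=W_0^{(4)}=0$ (those came from $\ol{\bf H}_1$, $\ol{\bf H}_2$) and $W_0^{(5)},W_0^{(6)}=\OO^*(xy)=\OO(\rr)$; so modulo $\OO(h-E(\eta))$ only $W_0^{(2)}$ and $W_0^{(3)}$ survive, and these are precisely the characteristics integrals
\[
W_0^{2}=-\int_{-\infty}^0 \HH_1^{(2)}\,dt',\qquad
W_0^{3}=-\int_0^{+\infty}\HH_1^{(3)}\,dt',
\]
with $\HH_1^{(2)}(I,s,x,0,t)=\HH_1(I,s,x,0,t)-\HH_1(I,s,0,0,t)$ and $\HH_1^{(3)}$ the analogous object in $y$. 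Here $\HH_1=H_{11}\circ\FF_0$ is the perturbation pulled back through Moser's coordinates. The next step is to evaluate $\pa_s W_0^{(2)}$ at the initial point (where $y=0$, the unstable branch) and $\pa_s W_0^{(3)}$ at the image point under the gluing map (where $x=0$, the stable branch), i.e. $\pa_s W_0^{(3)}\circ\SSS^\pm$; by Lemma \ref{lemma:GluingUnperturb} the gluing map at leading order sends $(s,x)\mapsto(s+\mu^\pm,\cdot)$ with the $x=0$ fibre, so this contributes the $H_+^\sigma$ half-line integral with the $\chi^\sigma(I,+\infty)=\mu^\sigma$ shift built in. Passing through $\Upsilon$, which at leading order just identifies $s=\xi+\nu(\eta)\tau$ and replaces $x,y$ by the Moser-normalized separatrix parameterization $\ga^\pm(\eta,\tau)$, the two integrals assemble into $\pa_\xi\big(-\int_{-\infty}^0 H_-^\sigma-\int_0^{+\infty}H_+^\sigma\big)=\pa_\xi\Theta^\sigma$ (again using $\vartheta^\sigma=0$ in the non-resonant zone). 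The same bookkeeping with $\pa_t$ in place of $\pa_s$ gives the $h$-component: $\pa_\tau W_0^{(2,3)}$ produce the $\pa_\tau$ of the same integrals, and the extra term $\pa_r g(\eta,0)\,\PP_1^r\circ\Upsilon$ in $M_1^{\sigma,h}$ accounts for the reparameterization between the $h$-time $\tau$ and the flow time $t$ along the separatrix (this is where the factor $\nu+\pa_I g$ multiplying $\PP_1^I$ and the $\pa_r g\cdot\PP_1^r$ term combine to give exactly $\pa_\tau$ acting on the pullback $H_1(\Ga^\sigma(\eta,\xi,\tau+t),t-\tau)$).

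The main obstacle I anticipate is the matching of shifts and of the two coordinate systems: showing that after composing with the gluing map $\SSS^\pm$ and with $\Upsilon$, the "initial" integral $-\int_{-\infty}^0\HH_1^{(2)}\,dt$ and the "glued" integral $-\int_0^{+\infty}\HH_1^{(3)}\circ\SSS^\pm\,dt$ glue up without a seam at $t=0$ into the single object $\int_{-\infty}^{+\infty}(H_1(\Ga^\sigma)-H_1(\text{core}))\,dt$, and that the subtracted "core" term $H_1(I,\varphi+\nu t,0,0,t-\tau)$ is exactly what the $\ol{\bf H}_1$-averaged term would have been in the resonant case (so that its vanishing in the non-resonant zone is consistent with $\vartheta^\sigma=0$). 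Concretely this requires: (i) identifying $\pa_r\Phi^\pm(I,0)$ with $\log\kk^\pm$ and $\pa_I\Phi^\pm(I,0)$ with $\mu^\pm$ (Lemma \ref{lemma:GluingUnperturb} and the remark after it), so the gluing shift is the correct $\mu^\sigma$; (ii) checking that $\FF_0$ maps the Moser separatrix $x\mapsto\ga^\pm(I,\cdot)$ to the time parameterization used in $\Ga^\sigma$ and $H_\pm^\sigma$; and (iii) controlling the error: all the dropped pieces ($W_0^{(1,4,5,6)}$, higher orders in $\eps$, the $\OO(\rr)$ corrections from $\Upsilon$ and from $g_r\ii(\eta,h-E(\eta))=\OO(h-E(\eta))$) are $\OO(h-E(\eta))$ uniformly, which is what the statement allows. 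Everything else is a routine differentiation-under-the-integral and Fourier bookkeeping that I would not write out in detail.
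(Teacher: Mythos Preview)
There is a genuine gap: your claim that $W_0^{(1)}=W_0^{(4)}=0$ and $\vartheta^\sigma=0$ in the non-resonant zone is based on a misreading of the definitions. What vanishes in $\Sr_\beta$ is $\ol{\bf H}_j$, not $\HH_1^{(j)}$ or $\vartheta^\sigma$. By \eqref{def:HamiltonianSplitting}, $\HH_1^{(1)}=\HH_1(I,s,0,0,t)-\ol{\bf H}_1$; in the non-resonant zone this becomes the \emph{full} $\HH_1|_{x=y=0}$, so $W_0^{(1)}=-\wt\pa^{\,-1}\HH_1|_{x=y=0}$ is nonzero. Likewise, by \eqref{def:InnerAveraging}, $\vartheta^\sigma=-\pa^{-1}[H_1|_{p=q=0}-\ol{\bf H}_1]=-\pa^{-1}H_1|_{p=q=0}\neq 0$ there. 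So $W_0^{(1)}$ contributes to $\PP_1^I$, and the $\vartheta^\sigma$ terms are genuinely present in $\Theta^\sigma$.

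The paper's proof keeps these terms, identifies $\pa_s W_{00}^1$ with the $\vartheta^\sigma$ contribution, and then invokes a nontrivial time-shift identity (the unnamed lemma inside the proof) equating the expression in the variables $(\xi+\nu\tau,\,\sigma e^{\la\tau},\,0)$ produced by $\Upsilon$ with the expression in $(\xi,\,\sigma,\,-\tau)$ appearing in the definition \eqref{def:Melnikov}. This identity does \emph{not} hold for the $\vartheta$ pieces alone (a Fourier check shows $\vartheta(\eta,\xi+\nu\tau,0)\neq\vartheta(\eta,\xi,-\tau)$ when $k\nu+k_0\neq 0$), nor for the $W_{00}^{2,3}$ pieces alone; only the full combination matches. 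Consequently, if you drop $W_0^{(1)}$ on one side and $\vartheta^\sigma$ on the other, the two omissions do not cancel and your computed $M_1^{\sigma,\eta}$ would differ from $\pa_\xi\Theta^\sigma$ by a term that is \emph{not} $\OO(h-E(\eta))$. The rest of your outline (tracking $W_0^{(2,3)}$ through $\SSS^\pm$ and $\Upsilon$, using $\pa_I\Phi^\pm(I,0)=\mu^\pm$, etc.) is correct in spirit, but you must restore the $W_0^{(1)}\leftrightarrow\vartheta^\sigma$ pieces and carry them through the time-shift step.
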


\begin{proof}
We explain how to prove the statement for the $\eta$ component. The other one 
can be obtained analogously. 

In Lemma \ref{lemma:Flow:SR} we have seen that 
\[
M_1^{\sigma,1}
=\PP_1^I\circ\Upsilon=M_1^I\circ\SSS\circ\Upsilon-M_1^I\circ\Upsilon.
\]
We start by analyzing the function $M_1^I=\pa_sW_0$ given in Lemma 
\ref{lemma:NormalFormChangeOfCoordinates}.

By the definition of the function $W_0$ in Lemma
\ref{lemma:NormalForm:FirstOrderCohomological}, we have that 
$W_0=W_{00}+\OO(xy)$ where $W_{00}$ is defined as follows. Recall that 
$\pa_I g(I,r)=\OO(r)$  and $\pa_r g(I,r)=\la(I)+\OO(r)$ with $\la(I)>0$. The 
function $W_{00}$ can be split as 
\[
W_{00}=W_{00}^ 1+W_{00}^ 2+W_{00}^3,
\] 
with
\[
 \begin{split}
  W_{00}^1&=\pa_0\ii \HH_1^{(1)}\\
   W_{00}^2&=-\int_{-\infty}^0 \HH_1^{(2)}(I,s+\nu(I)t', ye^{\la(I) t'}, 
t+t')\,dt'\\
W_{00}^3&=-\int^{+\infty}_0 \HH_1^{(3)}(I,s+\nu(I)t', xe^{-\la(I) t'}, 
t+t')\,dt'\\
 \end{split}
\]
where $\HH^{j}_1$ are the Hamiltonians introduced in 
\eqref{def:HamiltonianSplitting} and 
\[
 \pa\ii (f)=\pa\ii \left(\sum_{(k,k_0)\in\ZZ^{n+1}}f^{k,k_0}(I)e^{2\pi 
i (ks+k_0 t)}\right)=\qquad \qquad \qquad \]
\[\sum_{(k,k_0)\in\ZZ^{n+1}}\frac{f^{k,k_0}(I)}{2\pi i 
\left(k\nu(I)+k_0\right)}e^{2\pi 
i (ks+k_0 t)},
\]
which is well defined in the non-resonant zone. These functions are the first 
order in $xy$ of the funtions $W_0^j$, $j=1,2,3$, introduced in the proof of 
Lemma \ref{lemma:NormalForm:FirstOrderCohomological}.


Therefore, $M_1^I=\pa_s W_{00}^1+\pa_s W_{00}^2+\pa_s W_{00}^3+\OO(xy)$. Now, 
in Section \ref{sec:gluing:H0} we have seen that $y^\pm ,x=\OO(xy)$. 
Moreover, from the definition of the Hamiltonians $\HH^{j}_1$  in 
\eqref{def:HamiltonianSplitting} one has that $\HH^{2}_1=\OO(x)$ and 
$\HH^{3}_1=\OO(y)$. Thus, 
\[
 M_1^I\circ\SSS-M_1^I= 
\pa_s W_{00}^1\circ\SSS
-\pa_s W_{00}^1-\pa_s W_{00}^2
+\pa_s W_{00}^3\circ\SSS+\OO(xy).
 \]
Moreover, as shown in \cite{Treschev02a} and recalled in Section
\ref{sec:gluing:H0}, the functions $\Phi^\pm$ satisfy
$\pa_I\Phi^\pm(I,r)=\mu^\pm(I)+\OO(r)$ and 
$e^{\pa_r\Phi^\pm (I,r)}=\kk^\pm(I)+\OO(r)$. Then, we obtain
\[
\begin{split}
 M_1^I\circ\SSS-M_1^I= \,&
\pa_s W_{00}^1(I,s+\mu^\sigma(I),t)-\pa_s W_{00}^1(I,s,t)\\
&+\pa_s W_{00}^3(I,s+\mu^\sigma(I),
y,t)-\pa_s W_ {00}^2(I,s,y,t)+\OO(xy)\\
= \,&
\pa_s W_{00}^1(I,s+\mu^\sigma(I),t)-\pa_s W_{00}^1(I,s,t)\\
&+\pa_s W_{00}^3(I,s+\mu^\sigma(I),
(\kk^\sigma(I)y)\ii,t)-\pa_s W_ {00}^2(I,s,y,t)+\OO(xy).
\end{split}
 \]
Now it only remains to apply the change of coordinates $\Upsilon$. The
change 
$\Upsilon$ satisfies
\[
 \Upsilon(\xi,\eta,h,\tau)=(\xi+\nu(\eta)\tau, \eta, \sigma e^{\la(\eta)\tau}, 
0)+\OO(h-E(\eta)).
\]
We apply this change of coordinates to each term. We obtain first 
\[
\begin{split}
 \left.\pa_s W_{00}^1(I,s+\mu^\sigma(I),0)-\pa_s 
W_{00}^1(I,s,0)\right|_{(I,s,0)=\Upsilon(\eta,\xi,\tau)}=\\
\quad\quad\vartheta(\eta,
\xi+\nu(\eta)\tau , 0)-\vartheta(\eta , 
\xi+\nu(\eta)\tau+\mu^\sigma(\eta),0),
\end{split}
\]
where $\vartheta$ is the function introduced in \eqref{def:InnerAveraging}.

For the two other terms, it is enough to use to see 
\[
 \begin{split}
 \left.\pa_s W_ {00}^2(I,s,y,0)\right|_{(I,s,y,0)=\Upsilon(\eta,\xi,\tau)}&= \\
\pa_s W_ 
{00}^2(\eta,\xi+\nu&(\eta)\tau,\sigma 
e^{\la(\eta)\tau},0)\\
\left.\pa_s W_ 
{00}^3(I,s,(\kk(I)y)\ii,0)\right|_{(I,s,y,0)=\Upsilon(\eta,\xi,\tau)}&=\\
  \pa_s 
W_{00}^3(\eta,\xi+&\nu(\eta)\tau+\mu^\sigma(\eta),
\kk(\eta)\ii\sigma 
e^{-\la(\eta)\tau},0).
 \end{split}
\]

\

\begin{lemma}The following identity is satisfied,
\[
\begin{split}
\vartheta(\eta,
\xi+\nu(\eta)\tau , 0)-\vartheta(\eta , 
\xi+\nu(\eta)\tau+\mu^\sigma(\eta)\tau,0)&\\
-\pa_s W_ 
{00}^2(\eta,\xi+\nu(\eta)\tau,\sigma 
e^{\la(\eta)\tau},0)&\\
+ \pa_s 
W_{00}^3(y,\xi+\nu(\eta)\tau+\mu^\sigma(\eta),
\kk(\eta)\ii\sigma 
e^{-\la(\eta)\tau},0)&=\\
\vartheta(\eta,
\xi, -\tau)&-\vartheta(\eta , 
\xi+\mu^\sigma(\eta),\tau)\\
+\pa_s W_ 
{00}^2(\eta,\xi,\sigma 
,-\tau)&+ \pa_s 
W_{00}^3(y,\xi+\mu^\sigma(\eta),
\kk(\eta)\ii\sigma ,-\tau).
\end{split}
\]
\end{lemma}

\begin{proof} This lemma follows from the 
definition (\ref{def:InnerAveraging}). One can 
expand both sides into Fourier series and match them. 
\end{proof}

From this lemma, one can easily deduce the statement of Lemma
\ref{lemma:FirstOrderMs}.
\end{proof}

\section{The separatrix map in the neighborhood
of resonances}\label{sec:flowboxcoordinates:DR}
In the resonant regime we only compute the system up 
to first order. Thus, we follow closely \cite{Treschev02a}. 
The main difference is that our  resonant region is 
much larger than in \cite{Treschev02a}. Indeed, our 
$\beta$ is fixed indepedent of $\eps$, whereas in \cite{Treschev02a} he 
considers 
$\beta=\eps^{1/4}$. 
Recall that we are not keeping track of the dependence 
on $\beta$ and that we have assumed $\rr\lesssim\eps$.

First, we compose the flow in the normal form coordinates and 
the gluing map. We obtain the separatrix map in the 
$(J,\theta,x,y,t)$ coordinates. 

We denote the composition of the two maps $\FF=g_\eps^{\ol t}\circ
G_\eps^\sigma$ by
\[\FF(J,\theta,x,y)=(\FF_J(J,\theta,x,y),\FF_\theta(J,\theta,x,y),\FF_x(J,
\theta ,x,y) , \FF_y(J,\theta ,x, y)).\]
Recall that the map $\FF$  is independent of $t$ because we choose the initial 
time as $t=0$. 

For the $J$ component
one can easily see that 
\[
 \FF_J=J+\eps \PP^J_1+\OO^*(\eps^{11/6}),
 \]
where
\[
\PP^J_1(J,\theta,x,y)=G_1(J,\theta+\mu^\sigma(J),x,y)+M_1^J\circ\SSS(J,\theta,x
,y)-M_1^J(J,\theta,x,y)
 %
\]
where the function $G_1$ has been introduced in Section \ref{sec:gluingmaps:DR}
and the function $M^I$ has been used to define the gluing maps in Section
\ref{sec:gluingmaps:DR}.
From Lemma \ref{lemma:NormalFormChangeOfCoordinates}, Lemma \ref{lemma:Flow:DR}
and the definition of the gluing map in Section \ref{sec:gluingmaps:DR}, we can 
deduce that
 $\PP^J_1
 =\OO^*(1)$.

For the $\theta$ variable we have 
\[
\begin{split}
 \FF_\theta=&\theta+\pa_J\Phi^\pm (J,xy)+ (\nu(J)+\pa_J g(J,xy))\ol t\\
&+\eps
\PP^\theta_1+\OO^*(\eps^{11/6})
\end{split}
\]
where
\[
\begin{split}
\PP^\theta_1=&F_1+ M_1^\theta\circ\SSS-M_1^\theta+\pa_J^2\Phi^\pm(J,
xy)(M_1^J\circ\SSS-M_1^J)\\
+&\ol t\left(\pa_J\nu(J)+\pa^2_{J} g(J,xy))\right)
(M_1^J\circ\SSS-M_1^I)+\ol
t\pa^2_{Jr} g(J,xy))(M_1^r\circ\SSS-M_1^r).
\end{split}
\]
Using Lemmas \ref{lemma:NormalFormChangeOfCoordinates} and \ref{lemma:Flow:DR},
one can check that $\PP^\theta_1=\OO^*(\log\eps)$.

For the $x$ and $y$ components we have,
\[
\begin{split}
\FF_x&=y\ii \exp(
-\left(\pa_r g(J+\eps \PP_1^J, xy+\eps\PP_1^r) \ol
t+\Phi(J,\theta,\ol t)+\pa_r
\Phi^\pm(J,xy) \right))\times \\ &
\left(1+\OO_2^*(\eps)|\log\eps|\right)+\OO_3^*(\eps)\\
\FF_y&=\exp(\left(\pa_r g(J+\eps \PP_1^J, xy+\eps\PP_1^r) \ol
t+\Phi(I,\theta,\ol t)+\pa_r
\Phi^\pm(J,xy)\right) \times \\
& y(xy+\eps\PP_1^r)\left(1+\OO_2(\eps)|\log\eps|\right)+\OO_3^*(\eps).
\end{split}
\]
Moreover,
\[
 \FF_x\FF_y=xy+\eps\PP_1^r+\OO(\eps^2)
\]
where
\[
 \PP_1^r=M_1^r\circ\SSS-M_1^r,
\]
by Lemma \ref{lemma:NormalFormChangeOfCoordinates}, satisfies $
\PP_1^r=\OO(\beta^{-1})$.

To have simpler formulas for the separatrix map we consider a  change of
coordinates slightly different from the ones used in 
\cite{Treschev02a} and  in Section \ref{sec:flowboxcoordinates:SR}. The reasons 
are the following. On the one hand, we have to deal with a 
rather large resonant region   since it has width of order one with 
respect to $\eps$ instead of order $\eps^{1/4}$ as in \cite{Treschev02a}. On 
the other hand, we do not need to consider higher orders as  in Section 
\ref{sec:flowboxcoordinates:SR} since  in
the resonant regime we just analyze the first order of the separatrix map. As
in the single resonant regime, we  assume that  $|xy|\leq\eps$. We consider the
change defined by the generating function
\begin{equation}\label{def:GeneratingFunctionFlowBox:DR}
\SSS(J, \xi,x,\tau)=J\xi +E(J)\tau+\sigma x e^{\la(J)\tau}+\eps\int_0^\tau
\overline{\bf H}_1
(J,\xi+\nu(J)s)ds.
\end{equation}
Note that in the core of the resonance $\nu(J)\sim 0$ and, therefore,
\[
 \int_0^\tau
\overline{\bf H}_1
(J,\xi+\nu(J)s)ds\ \sim\ \tau 
\overline{\bf H}_1
(J,\xi)\ \sim\  \tau 
\overline{\bf H}_1
(J,\xi+\nu(J)\tau)
\]
as in \cite{Treschev02a}. Nevertheless, the change we consider is better suited 
for points not extremely close to the double resonance. 

Recall that after switching to slow-fast variables,  
$\overline{\bf H}_1$ does not
depend on $t$.
We have then the following changes 
\[
 \begin{split} 
 \xi=& \theta-\frac{\nu(J)}{\la(J)}\log 
|y|+\la\ii(J)\la'(J)xy+\eps\int_0^\tau{\bf\bar
H}_1(J,\theta+\nu(J)(s-\la\ii(J)\log|y|))ds\\
&+\eps\tau\nu\ii(J)\nu'(J) \left({\bf\bar
H}_1(J,\theta)-{\bf\bar
H}_1(J,\theta-\la\ii(J)\log|y|)\right)+\OO^*(\eps,xy)\\
\eta=&J+\eps\nu\ii(J)\left({\bf\bar
H}_1(J,\theta)-{\bf\bar
H}_1(J,\theta-\nu(J)\la\ii(J)\log|y|)\right)+\OO_2^*(\eps,xy)\\
\tau=&\frac{\log|y|}{\la(J)}+\OO^*(\eps,xy)\\ 
h=&E(J)+\la(J)xy+\eps {\bf\bar
H}_1(J,\theta)+\OO_2^*(\eps,xy) \end{split}
\]
and 

\begin{equation}\label{def:FlowBoxChange:DR}
\begin{pmatrix} \theta\\J\\ x\\ y\end{pmatrix}= \Upsilon \begin{pmatrix}
\xi\\\eta\\
\tau\\ h\end{pmatrix}
\end{equation}
defined as
\begin{equation} \nonumber 
\begin{split}
\theta=&\xi+\nu(\eta)\tau+\eps\int_0^\tau\pa_J {\bf\bar
H}_1(\eta,\xi+\nu(\eta)s)ds\\
&+\la'(\eta)\la\ii(\eta)\tau \left(h-E(\eta)- {\bf\bar
H}_1(\eta,\xi)\right)+\eps\OO^*(\eps , h-E)  \\ 
J=&\eta-\eps\nu\ii(\eta)\left({\bf\bar
H}_1(\eta,\xi+\nu(\eta)\tau)-{\bf\bar
H}_1(\eta,\xi)\right)+\eps\OO^*(\eps, h-E) \\
x=&\frac{\sigma}{\la(\eta)}
e^{-\la(\eta)\tau}\left(h-E(\eta)-\eps 
{\bf\bar
H}_1(\eta,\xi)\right)+\eps\OO^*(\eps, h-E) \\ 
y=&\sigma e^{\la(\eta)\tau}+\OO^*(\eps, h-E)
\end{split}
\end{equation}
We call $\Upsilon_0$ the first order of this change of coordinates, defined by 
\begin{equation}\label{def:FlowBoxChange:DR:0}
\begin{pmatrix} \theta\\J\\ x\\ y\end{pmatrix}= \Upsilon_0 \begin{pmatrix}
\xi\\\eta\\
\tau\\ h\end{pmatrix}=\begin{pmatrix}
\xi+\nu(\eta)\tau \\ 
\eta\\
\frac{\sigma}{\la(\eta)}
e^{-\la(\eta)\tau}\left(h-E(\eta)\right) \\ \sigma
e^{\la(\eta)\tau}\end{pmatrix}
\end{equation}
Theorem \ref{thm:SM:DR} can be rephrased as 
the following lemma. 

\begin{lemma}\label{lemma:SM:DR:1}
Assume that the function $w_0$ in \eqref{def:omega01} satisfies
\[
 c\ii\eps^{1+a}<|w_0(\eta^*,h^*,\xi^*)|<c\eps
\]
for some $c>0$ and $1\ge a>0$ independent of $\eps$. Then, 
the separatrix map $\SM_\eps$ has the following form
\begin{equation*}
\begin{aligned}
\eta^*=&\eta&&+\eps\pa_\xi\Theta^\sigma(\eta,\xi,\tau)&&+\eps 
B^{\eta,\sigma} (\eta,\xi,w_0,\tau)&&+\OO^*(\eps^{5/3})\\
 \xi^*=&\xi+\mu^\sigma&&+\frac{ 
\nu}{\la}\log\left|\frac{\kk^\sigma 
w_0^\sigma}{\la}\right|&&+\eps B^{\xi,\sigma} (\eta,\xi,w_0,\tau, \ol 
t)&&+\OO^*(\eps)\\
 h^*=&h &&+\eps\pa_\tau\Theta^\sigma(\eta,\xi,\tau)&&+\eps 
B^{h,\sigma} (\eta,\xi,\tau)&& +\OO^*(\eps^{5/3})\\
 \tau^*=&\tau+\ol t&&+ \frac{1}{\la}\log\left|\frac{\kk^\sigma 
w_0^\sigma}{\la}\right|&&+\eps B^{\tau,\sigma}(\eta,\xi,w_0,\tau, \ol 
t)&&+\OO^*(\eps)\\
\sigma^*=&\sigma\ \mathrm{sgn}\ w_0
\end{aligned}
\end{equation*}
where $\nu$, $\la$, $\mu^\sigma$ and $\kk^\sigma$ are functions of $\eta$,
\[
 \begin{split}
B^{\eta,\sigma} (\eta,\xi,w_0^\sigma,&\tau)= 
 -\frac{1}{\nu}\left({\bf\bar
H}_1(\eta,\xi+\nu\tau)-{\bf\bar
H}_1(\eta,\xi)\right)\\
&\frac{1}{\nu}\left({\bf\bar
H}_1(\eta,\xi+\nu\tau+\mu^\sigma)-{\bf\bar
H}_1(\eta,\xi+\frac{\nu}{\la}\log\left|\frac{\kk^\sigma 
w_0}{\la}\right|+\mu^\sigma)\right)\\
B^{h,\sigma} (\eta,\xi,&\tau)= 
\ {\bf\bar
H}_1(\eta,\xi+\nu\tau+\mu^\sigma)-{\bf\bar
H}_1(\eta,\xi+\nu\tau),
\end{split}
\]
and
\[
 \begin{split}
 B^{\xi,\sigma} (\eta,\xi,w_0,\tau,\ol t)&= 
f_1(\eta,\xi,w_0,\tau)\tau+f_2(\eta,\xi,w_0,\tau)\log \left|\frac{\kk^\sigma 
w_0}{\la}\right|+f_3(\eta,\xi,w_0,\tau, \ol t)\ \ol t\\
 B^{\tau,\sigma} (\eta,\xi,w_0,\tau,\ol t)&=  
g_1(\eta,\xi,w_0,\tau)\tau+g_2(\eta,\xi,w_0,\tau)\log \left|\frac{\kk^\sigma 
w_0}{\la}\right|+g_3(\eta,\xi,w_0,\tau, \ol t)\ \ol t
\end{split}
\]
for certain functions $f_i$, $g_i$ which satisfy $f_i,g_i=\OO^*(1)$. Therefore, 
the functions $B^{z,\sigma}$ satisfy
\[
 B^{\eta,\sigma}, B^{h,\sigma}=\OO^*(1),\qquad B^{\xi,\sigma}, 
B^{\tau,\sigma}=\OO^*(\log\eps).
\]
\end{lemma}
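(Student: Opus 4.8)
The strategy mirrors the non-resonant case treated in Section~\ref{sec:flowboxcoordinates:SR}, but we retain only the first order in $\eps$ and work with the resonant normal form, in which $\ol{\bf H}_1\neq 0$. First I would take as input the composition $\FF=g_\eps^{\ol t}\circ G_\eps^\sigma$ of the transition map near the singularity (Lemma~\ref{lemma:Flow:DR}) with the gluing map (Section~\ref{sec:gluingmaps:DR}); its components $\FF_J,\FF_\theta,\FF_x,\FF_y$ and the product $\FF_x\FF_y$, displayed just before the statement, are already expressed through $\PP^J_1$, $\PP^\theta_1$, $\PP^r_1$, the functions $F_1,G_1,\Phi$ of Lemma~\ref{lemma:Flow:DR} and the gluing potentials $\Phi^\pm$ of Lemma~\ref{lemma:GluingUnperturb}. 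Throughout one uses $\rr=|xy|\lesssim\eps$, $\ol t=\OO(\log\eps)$ and the window $c<|w_0|\,e^{\la\ol t}<c\ii$, so that products such as $\eps\ol t$, $\rr\,\ol t$, $\rr^2$ are $\OO(\eps\log\eps)$ or smaller and get absorbed into the remainders $\OO^*(\eps^{5/3})$ in the action components and $\OO^*(\eps)$ in the angle components.

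Second, I would substitute the flow-box change $\Upsilon$ defined by the generating function~\eqref{def:GeneratingFunctionFlowBox:DR} (explicitly~\eqref{def:FlowBoxChange:DR}, with leading part $\Upsilon_0$ in~\eqref{def:FlowBoxChange:DR:0}) both at the incoming point and, via $\Upsilon\ii$, at the image point. For the hyperbolic pair this gives $\tau^*=\log|\FF_y|/\la(\,\cdot\,)$; splitting $\log|\FF_y|$ into the piece $\pa_r g\cdot\ol t$ coming from the transition time and the remaining $\log\bigl(e^{\pa_r\Phi^\pm}(xy+\ldots)\bigr)$, and recalling (Section~\ref{sec:gluing:H0}) that $e^{\pa_r\Phi^\pm}=\kk^\sigma+\OO(r)$ while $h^*-E(\eta^*)-\eps\ol{\bf H}_1$ equals $\la\,x^*y^*$ to leading order, one reads off $\tau^*=\tau+\ol t+\la\ii\log|\kk^\sigma w_0^\sigma/\la|+\eps B^{\tau,\sigma}+\OO^*(\eps)$ with $\bar t$ fixed by~\eqref{eq:time-interval}; the $\xi^*$ component is obtained the same way from $\FF_\theta$, the constant $\mu^\sigma$ entering through $\pa_J\Phi^\pm=\mu^\sigma+\OO(r)$. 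The structural form $f_1\tau+f_2\log|\kk^\sigma w_0/\la|+f_3\,\ol t$ of $B^{\xi,\sigma},B^{\tau,\sigma}$ with $\CCC^{s-4}$ coefficients $f_i,g_i=\OO^*(1)$ then follows by collecting in the $\Upsilon$-pullbacks the three kinds of linear contributions: those proportional to the incoming time $\tau$, those proportional to the transition logarithm, and those proportional to $\ol t$ (the latter coming from the $\ol t$-linear parts of $F_1$, of $\Phi$, and of $\pa_I(\nu+\pa_I g)$ inside $\PP^\theta_1$).

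Third, I would identify the $\eps$-order of the action components with the splitting potential. For $\eta^*=\FF_J$ and $h^*=E(\FF_J)+\la\,\FF_x\FF_y+\eps\ol{\bf H}_1+\OO_2^*$ the first order is $\eps$ times a gluing contribution $M_1^J\circ\SSS-M_1^J$, the flow contribution $G_1(J,\theta+\mu^\sigma,\,\cdot\,)$, and the $\eps$-correction in $\Upsilon$. This is the resonant analogue of Lemma~\ref{lemma:FirstOrderMs}: expanding $W_0$ from Lemma~\ref{lemma:NormalForm:FirstOrderCohomological} along the unperturbed separatrix through $\Upsilon_0$ and matching Fourier series reproduces $\pa_\xi\Theta^\sigma$ (resp.\ $\pa_\tau\Theta^\sigma$) exactly as in the non-resonant proof, while the new feature is that $G_1$ and the term $\eps\int_0^\tau\ol{\bf H}_1$ in~\eqref{def:GeneratingFunctionFlowBox:DR} no longer vanish and contribute precisely the differences of $\ol{\bf H}_1$ that make up $B^{\eta,\sigma}$ and $B^{h,\sigma}$ in~\eqref{def:DR:Bs}. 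Finally, passing to the implicit form in the statement means solving~\eqref{def:omega01} for $w_0$ as a near-identity function of $(\eta^*,h^*,\xi^*)$, which is legitimate since $\xi^*$ depends on $w_0$ only through $\log|w_0|$ with a small coefficient.

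The main obstacle is the bookkeeping in this enlarged resonant zone: unlike in \cite{Treschev02a} the width $\beta$ is fixed in $\eps$, so $\nu(J)$ may be as small as $\OO(\beta)$ yet vanishes at the core, and one must check---as remarked after Theorem~\ref{thm:SM:DR}---that $B^{\eta,\sigma}$ extends smoothly across $\nu=0$, the apparent $1/\nu$ pole cancelling because the bracketed differences of $\ol{\bf H}_1$ are themselves $\OO(\nu)$. Simultaneously one must verify that all the $\eps\log\eps$- and $\rr$-dependent cross terms produced when composing $\FF$ with $\Upsilon$ genuinely fit inside the claimed $\OO^*(\eps^{5/3})$ and $\OO^*(\eps)$ remainders; this is where the extra degree of regularity imposed by \textbf{H1$'$} on $H_1$ is spent.
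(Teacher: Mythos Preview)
Your proposal is correct and follows essentially the same approach as the paper: compose the map $\FF$ with the flow-box change $\Upsilon$ of~\eqref{def:FlowBoxChange:DR} as in the proof of Lemma~\ref{lemma:SM:SR:1}, and then identify the first-order action contributions with $\pa_\xi\Theta^\sigma,\pa_\tau\Theta^\sigma$ by the method of Lemma~\ref{lemma:FirstOrderMs}. Your write-up is in fact considerably more explicit than the paper's own proof, which merely points to these two ingredients; in particular, your account of how $G_1$ and the $\eps\int_0^\tau\ol{\bf H}_1$ term in~\eqref{def:GeneratingFunctionFlowBox:DR} combine to produce the $\ol{\bf H}_1$-differences defining $B^{\eta,\sigma}$ and $B^{h,\sigma}$, and your remark on the cancellation of the apparent $1/\nu$ pole, are exactly the mechanisms at work.
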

Recall that $\nu(0)=0$. Nevertheless, one can easily see that the function $ 
B^{\eta,\sigma}$ is well defined even as $\nu\rightarrow 0$ since it has a well 
defined limit.

\begin{proof}
To prove this lemma, it is enough to compose the change of coordinates given in 
\eqref{def:FlowBoxChange:DR} with the map $\FF$ as done in the proof of Lemma 
\ref{lemma:SM:SR:1}. One has also to take into account how to 
derive the splitting potentials $\Theta^\sigma$ from the definitions of 
the functions $\PP_1^z$, as explained in Lemma \ref{lemma:FirstOrderMs}.

\end{proof}

\appendix
\section{The separatrix map of the generalized Arnold
example}\label{sec:Arnold}

In this appendix we apply Theorems
\ref{thm:SM:SR} and \ref{thm:SM:DR}  to the generalized 
Arnold example \eqref{def:ArnoldGeneralized}. The Arnold example 
presents several simplifications and thus the
formulas are considerably simpler than in the general case. 
For these models all the transformations and maps 
are $\CCC^{\infty}$. The results presented in this appendix also apply to 
perturbations of the form 
\[
H_1(I,\varphi,p,q,t):=
P(I,p,\exp(iq),\exp(i \varphi),\,\exp( it)), 
\]
where $P$ is a real trigonometric polynomial of 
order $N$ in $\varphi$ and $t$, i.e.  it has the form: 
\[
\sum_{|k_i|\le N, i=1,2,3}  
h_{k_1,k_2,k_3} (p,q,I)\exp i (k_1q+k_2 \varphi+k_3 t),
\]
for some $N\in \Z_+$.

The key point is that the generalized Arnold example has an unperturbed 
Hamiltonian $H_0$ which does not present copuling between the ``pendulum 
variables'' and the rotator. We first analyze the non-resonant regime as defined 
in \eqref{def:SR}.  Thus, we consider the resonant
regime for any resonance $k\in\NNN(H_1)\cup \NNN^{(2)}(H_1)$ as defined 
in \eqref{def:DR}.

For the generalized Arnold example, the frequency $\nu(\eta)$ defined in 
\eqref{def:nu} satisfies $\nu(\eta)=\eta$. The matrix $\La$ defined in 
\eqref{def:MatrixLambda} is just
\[
 \La=\begin{pmatrix} 0&1\\1&0\end{pmatrix}
\]
therefore its positive eigenvalue $\la$ is $\la=1$ and is
independent of $I$. The function $g$, defined through the Moser normal form in 
Lemma
\ref{lemma:Moser}, is independent of $I$  and satisfies
$g(r)=r+\OO_2(r)$. Then, the function $w$  in \eqref{def:omega} is
defined by 
\[
w=g\ii \left(h^*-\frac{(\eta^*)^2}{2}\right).
\]
Analogously, since the Moser normal form is independent of $I$, so are the 
functions $\Phi^\pm$ involved in the gluing map. Thus, 
the functions $\mu^\pm$ in \eqref{def:mu} satisfy $\mu^\pm=0$. 

We have the following theorem.

\begin{theorem}\label{thm:SM:SR:Arnold}
Fix $\beta>0$ and $1\ge a>0$. For $\eps$ sufficiently small
there exist $c>0$ independent of $\eps$ and a canonical 
system of coordinates $(\eta,\xi,h,\tau)$ such that in 
the non-resonant zone $ \Sr_\beta$ we have 
\[
\eta = I +\OO^*_{1}\left(\eps,H_0-\frac{I^2}{2}\right), \quad 
\xi +\eta\tau=\varphi+f, \quad 
h=H_0+\OO^*_{1}\left(\eps,H_0-\frac{I^2}{2}\right),
\]
where $f$ denotes a function depending only on 
$(I,p,q,\eps)$ and such that 
$f=\mathcal O(\eps)$. In these coordinates 
the separatrix map has the following form. For any 
$\sigma\in \{-,+\}$ and $(\eta^*,h^*)$ such that 
\[
 c\ii\eps^{1+a}<|w(\eta^*,h^*)|<c\eps,\qquad |\tau|<c\ii, \qquad
c<|w(\eta^*,h^*)|\,e^{\ol t}<c\ii,
\]
the separatrix map $(\eta^*, \xi^*,h^*,\tau^*)=
\SM(\eta, \xi,h,\tau)$ is defined implicitly as follows
 \[
  \begin{split}
 \eta^*=&\ \eta- \ \ \eps
M_1^{\sigma,\eta}+\ \ \eps^2 M_2^{\sigma,\eta}+\ 
\ \OO_3^*(\eps\log\eps)\\
   \xi^*=&\ \xi+\qquad \qquad \ \ \ \ \ \pa_\eta w(\eta^*,h^*) 
\left[\log |w(\eta^*,h^*)| + (\Phi^\sg)' ( w (\eta^*,h^*))
\right] 
\\
&+
\OO_1^*(\eps\log \eps)\\
 h^*=&\ h - \ \ \eps M_1^{\sigma,\tau}
+\ \ \eps^2 M_2^{\sigma,\tau}+\ \ \OO_3^*(\eps)\\
 \tau^*=&\ \tau+\  \ol t +\qquad \qquad 
\pa_h w(\eta^*,h^*) 
\left[\log| w(\eta^*,h^*)| + (\Phi^\sg)' ( w (\eta^*,h^*))
\right] 
\\
&+ \OO_1^*(\eps\log\eps),
  \end{split}
 \]
where $M^*_i$ and $\Phi^\sigma$ are $\CCC^\infty$ functions defined in 
Lemmas \ref{lemma:GluingUnperturb} and \ref{lemma:SM:SR:1}  
respectively, and $\bar t$ is an integer satisfying 
(\ref{eq:time-interval}). The functions $M_i^*$ are evaluated at 
$(\eta^*, \xi,h^*,\tau)$ and satisfy
\[
 M_1^{\sigma, 1}=\pa_\xi\Theta^\sigma +\OO^*_2(w),
\qquad M_1^{\sigma, 2}=\pa_\tau\Theta^\sigma+\OO^*_2(w), 
\]
where $\Theta^\sigma$ is the Melnikov potential defined by 
\begin{equation}\label{def:Melnikov:Arnold}
\Theta^\sigma (\eta,\xi,\tau)=\int_{-\infty}^{+\infty} \left(H_1\left(\Ga^\sigma
(\eta,\xi,\tau+t),t-\tau\right)-H_1\left(\eta,\varphi+\eta 
t,0,0,t-\tau\right)\right)\,dt
\end{equation}
and $\Ga^\sigma$ are the time parameterization of the pendulum separatrices,
that is 
\[
\Ga^\sigma (\eta,\xi,\tau)=\left(\eta, \xi+\eta\tau, 4\arctan
(e^{\sigma\tau}),\frac{2\sigma}{\cosh \tau}\right).
\]
\end{theorem}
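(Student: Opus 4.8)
The plan is to derive Theorem~\ref{thm:SM:SR:Arnold} as a specialization of the general non-resonant result, Theorem~\ref{thm:SM:SR} (equivalently Lemma~\ref{lemma:SM:SR:1}), by exploiting the three structural simplifications that the generalized Arnold example enjoys: (i) the unperturbed Hamiltonian $H_0$ is a product of a rotor and a pendulum with no $I$-coupling, so the Moser normal form of Lemma~\ref{lemma:Moser} is $I$-independent, hence $g=g(r)$, $\la\equiv 1$, and $\nu(\eta)=\eta$; (ii) consequently the gluing maps of Lemma~\ref{lemma:GluingUnperturb} have $I$-independent generating functions $\Phi^\pm=\Phi^\pm(r)$, so $\pa_I\Phi^\pm\equiv 0$ and $\mu^\pm\equiv 0$; (iii) all objects are $\CCC^\infty$ since $P$ is a trigonometric polynomial with analytic $(p,q)$-dependence, so the finite-regularity bookkeeping of Sections~\ref{sec:NormalForms}--\ref{sec:flowboxcoordinates:SR} is vacuous and every ``$\CCC^{s-4}$'' statement becomes ``$\CCC^\infty$''. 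First I would substitute these facts into the formulas of Lemma~\ref{lemma:SM:SR:1}: with $\nu(\eta)=\eta$ and $g$ independent of $I$, the term $\pa_I g(\eta,g_r\ii(\eta,h-E(\eta)))$ vanishes identically, $E(\eta)=\eta^2/2$, and the function $w$ of \eqref{def:omega} becomes $w=g\ii(h^*-(\eta^*)^2/2)$ exactly as stated.

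Next I would carry out the bracket-chasing that turns $R$ in \eqref{def:R} into the expression appearing inside the logarithm. Since $\Phi^\pm$ depends only on $r$, $\pa_r\Phi^\pm(\eta,g_r\ii(\eta,h-E(\eta)))=(\Phi^\sg)'(w)$ (after evaluating at the relevant arguments), so $\log R=\log|w(\eta^*,h^*)|+(\Phi^\sg)'(w(\eta^*,h^*))+\OO^*_2$, and the prefactor $1/\pa_r g$ in the $\tau^*$ and $\xi^*$ formulas is $1+\OO^*(w)$ because $g'(0)=\la=1$; absorbing the $\OO^*(w)$ correction into the already-present error term $\OO^*_1(\eps+\rr)(|\log\eps|+|\log\rr|)$ and using $\rr\sim|w|\sim\eps$ (from the constraint $c\ii\eps^{1+a}<|w|<c\eps$) collapses all error terms to $\OO^*(\eps\log\eps)$. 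In the $\xi^*$ equation the ``Melnikov/integrable rotation'' term $\pa_\eta\Phi^\pm(\eta,\cdot)$ drops out entirely (it equals $\mu^\sg\equiv 0$ to leading order, and the $\OO^*_2(r)$ remainder is absorbed), leaving only the $\pa_\eta w\,[\log|w|+(\Phi^\sg)'(w)]$ term, which is exactly the displayed formula. The identification $M_1^{\sigma,\eta}=\pa_\xi\Theta^\sigma+\OO^*_2(w)$ and $M_1^{\sigma,h}=\pa_\tau\Theta^\sigma+\OO^*_2(w)$ is then a direct appeal to Lemma~\ref{lemma:FirstOrderMs}, specialized by noting that for the Arnold example $\chi^\pm\equiv 0$ and $\Ga^\sg(\eta,\xi,\tau)=(\eta,\xi+\eta\tau,4\arctan(e^{\sigma\tau}),2\sigma/\cosh\tau)$ is the explicit pendulum separatrix parameterization, so the splitting potential \eqref{def:Melnikov} reduces to the single Melnikov integral \eqref{def:Melnikov:Arnold} (the $\vartheta^\sg$ terms cancel pairwise because $\mu^\sg=0$, and $\ol{\bf H}_1$-corrections vanish in the non-resonant zone).

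Finally I would record the coordinate statement: applying Theorem~\ref{thm:SM:SR} verbatim with $E(I)=I^2/2$, $\nu(\eta)=\eta$ gives $\eta=I+\OO^*_1(\eps,H_0-I^2/2)$, $\xi+\eta\tau=\varphi+f$, $h=H_0+\OO^*_1(\eps,H_0-I^2/2)$ with $f=\OO(w+\eps)=\OO(\eps)$ on the admissible region, $f(I,0,0,0)=0$; the canonical form $\omega=d\eta\wedge d\xi+dh\wedge d\tau$ and the integer $\ol t$ satisfying \eqref{eq:time-interval} are inherited unchanged. I do not expect a genuine obstacle here — the result is a corollary, and the only care needed is bookkeeping of error terms: one must check that every $\OO^*$ remainder in Lemma~\ref{lemma:SM:SR:1} that carries a $|\log\rr|$ or $|\log\eps|$ factor, together with the $\OO^*(w)$ corrections produced by replacing $\pa_r g$, $\pa_r\Phi^\pm$ by their values plus $\OO(r)$, indeed combines (using $\rr\asymp\eps$ in the stated window) into the uniform $\OO^*(\eps\log\eps)$ appearing in the angular components and $\OO^*(\eps)$, $\OO^*_3(\eps)$ in the action components. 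The mildly delicate point — the one I would write out most carefully — is verifying the cancellation $\pa_\eta\Phi^\pm\to 0$ and the vanishing of the $\vartheta^\sg$-difference in $\Theta^\sg$, i.e.\ that no spurious $\eps$-order term in $\xi^*$ survives the specialization; this is exactly the ``not so easy to see'' cancellation flagged in the remark after Theorem~\ref{thm:SM:SR}, but here it is transparent because $\mu^\sg\equiv 0$ makes the two arguments of $\vartheta^\sg$ in \eqref{def:Melnikov} coincide.
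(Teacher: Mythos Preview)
Your proposal is correct and follows essentially the same approach as the paper: specialize Theorem~\ref{thm:SM:SR}/Lemma~\ref{lemma:SM:SR:1} using the structural facts $g=g(r)$, $\la\equiv 1$, $\nu(\eta)=\eta$, $\mu^\sigma=0$, $\chi^\pm=0$, and then observe that $\mu^\sigma=0$ collapses the $\vartheta^\sigma$-difference in \eqref{def:Melnikov} so that $\Theta^\sigma$ reduces to the single Melnikov integral \eqref{def:Melnikov:Arnold}. The paper's own proof is a two-sentence sketch of exactly these points; your version simply spells out the error-term bookkeeping and the reduction of $R$ more explicitly (which is useful, since the paper leaves that implicit). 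One minor quibble: the ``not so easy to see'' remark you cite concerns a different toy example (perturbation independent of $(p,q)$) rather than the Arnold model itself, so that analogy is tangential --- but it does not affect the argument.
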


\begin{proof}This theorem can be easily deduced from Theorem \ref{thm:SM:SR}.
First, one  needs to recall that the Moser normal form only depends on $x$ 
and $y$
(this is the reason for the better estimates for the function $f$).

To deduce the formula of the Melnikov potentials $\Theta^\sigma$ defined in
\eqref{def:Melnikov} it is enough to recall that for the generalized Arnold
model \eqref{def:ArnoldGeneralized}, $\mu^\sigma=0$. This implies that the 
$\vartheta^\sigma$ contribution to \eqref{def:Melnikov} vanishes and therefore 
the splitting potential is just given by the Melnikov integral. Since $H_0$ is 
uncoupled we also have $\chi^\pm=0$. Then, the Melnikov 
potential can be just written as one integral.
\end{proof}

Now we analyze the resonant regions. We consider the slow-fast variables 
$(J,\theta)$ 
defined in Section
\ref{sec:finiteharmonics}.
The function $w_0$ defined in \eqref{def:omega0} just becomes
\[
 w_0(\eta^*,h^*,\xi^*)=h^*-\frac{(\eta^*)^2}{2}-\eps {\bf 
H}_1(\eta^*,\xi^*).
\]

\begin{theorem}\label{thm:SM:DR:Arnold}
Fix $\beta>0$,  $1\ge a>0$ and $k\in\NNN(H_1)\cup\NNN^{(2)}(H_1)$. 
For $\eps$ sufficiently small there exist $c>0$ independent 
of $\eps$ and canonical coordinates $(\eta,\xi,h,\tau)$ 
such that in the resonant zone $ \Dr^k_\beta$ 
the following conditions hold: 
\begin{itemize}
\item the canonical form $\om=d\eta\wedge d\xi+ 
dh\wedge d\tau$;
\item 
$\eta = I +\OO^*_1(\eps,H_0-E(I)), \,
\xi + \eta\tau=\varphi+f, \, 
h=H_0+\OO^*_1(\eps,H_0-E(I)),$
where $f$ denotes a function depending only on 
$(I,p,q,\eps)$ and such that 
$f=\mathcal O(\eps)$. 

\item In these coordinates 
$\SM_\eps$ has the following form. For any 
$\sigma\in \{-,+\}$ and $(\eta^*,h^*)$ such that 
\[
 c\ii\eps^{1+a}<|w_0(\eta^*,h^*,\xi^*)|<c\eps,\quad |\tau|<c\ii, \quad
c<|w_0(\eta^*,h^*,\xi^*)|\,e^{\ol t}<c\ii,
\]
the separatrix map $(\eta^*, \xi^*,h^*,\tau^*)=
\SM_\eps(\eta, \xi,h,\tau)$ is defined implicitly as follows
\begin{equation*}
\begin{aligned}
\eta^*=&\eta&&+\eps\pa_\xi\Theta^\sigma(\eta,\xi,\tau)\quad  &&+\eps 
B^{\eta,\sigma} (\eta,\xi, w_0,\tau)&& +\OO^*(\eps^{5/3})\\
 \xi^*=&\xi &&+
 \eta\log\left|\kk^\sigma w_0\right|
&&+\eps B^{\xi,\sigma} (\eta,\xi, w_0,\tau, \ol t) &&+\OO^*(\eps)\\
 h^*=&h &&+\eps\pa_\tau\Theta^\sigma(\eta,\xi,\tau)&& 
 \qquad \qquad \quad +\OO^*(\eps^{5/3})\\
 \tau^*=&\tau+\ol t&& +
 \log\left|\kk^\sigma w_0\right|&& 
 +\eps B^{\tau,\sigma}(\eta,\xi,\tau, w_0,\ol t)&&+\OO^*(\eps)\\
\sigma^*=&\sigma\ \mathrm{sgn}\ w_0
\end{aligned}
\end{equation*}
where $\Theta^\sigma$ are the Melnikov potentials given in
\eqref{def:Melnikov:Arnold}. If we define, $z=(\eta,\xi, w_0,\tau)$ the 
functions  $B^{\eta,\sigma}$, $B^{\tau,\sigma}$ and $B^{\xi,\sigma}$ are 
defined by
\[
 B^{\eta,\sigma} (z)= 
\frac{1}{\eta}\left({\bf\bar
H}_1(\eta,\xi)-{\bf\bar
H}_1(\eta,\xi+\eta
\log\left|\kk^\sigma w_0\right|)\right)
\]
and 
\[
 \begin{split}
 B^{\tau,\sigma} (z,\bar t)&= 
f_1(z)\tau+
f_2(z)\log \left|\kk^\sigma w_0\right|+
f_3(z, \ol t)\,\ol t\\
 B^{\xi,\sigma} (z,\bar t)&=  
g_1(z)\tau+
g_2(z)\log \left|\kk^\sigma w_0\right|+
g_3(z, \ol t)\ \ol t
\end{split}
\]
for certain $\CCC^\infty$ functions $f_i$, $g_i$ satisfying $f_i,g_i=\OO^*(1)$.
\end{itemize}

\end{theorem}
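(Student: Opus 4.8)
The plan is to obtain Theorem \ref{thm:SM:DR:Arnold} as a specialization of the general resonant formula, Theorem \ref{thm:SM:DR} (equivalently Lemma \ref{lemma:SM:DR:1}), by inserting the structural simplifications of the generalized Arnold example \eqref{def:ArnoldGeneralized}. First I would record these simplifications. Because the unperturbed Hamiltonian $H_0=I^2/2+p^2/2+(\cos q-1)$ decouples the rotor from the pendulum: the frequency map of \eqref{def:nu} is $\nu(\eta)=\eta$; the matrix $\La$ of \eqref{def:MatrixLambda} is the constant antidiagonal matrix, so $\la\equiv 1$; the Moser normal form of Lemma \ref{lemma:Moser} is $I$-independent with $g(r)=r+\OO_2(r)$; and $\mu^\pm\equiv 0$ in \eqref{def:mu} since $\pa_I H_0(I,\ga^\pm(I,\cdot))=0$ along the pendulum separatrices, so also $\chi^\pm\equiv 0$. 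Moreover $H_0$ is real-analytic in all variables, so Lemmas \ref{lemma:Moser}, \ref{lemma:NormalForm:FirstOrderCohomological}--\ref{lemma:HamInPQSmall}, the gluing maps of Section \ref{sec:gluingmaps:DR}, and the flow-box change \eqref{def:FlowBoxChange:DR} can all be taken $\CCC^\infty$, which yields the asserted smoothness.

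Next I would substitute $\nu(\eta)=\eta$, $\la\equiv 1$, $\mu^\sigma\equiv 0$ into Lemma \ref{lemma:SM:DR:1}. The $\xi^*$ and $\tau^*$ lines collapse immediately to $\xi+\eta\log|\kk^\sigma w_0|$ and $\tau+\ol t+\log|\kk^\sigma w_0|$. For the correction terms: in $B^{h,\sigma}$ the two arguments of ${\bf\bar H}_1$ differ only by $\mu^\sigma=0$, so $B^{h,\sigma}\equiv 0$, which is why the $h^*$ line of the theorem carries no $B$-term; in $B^{\eta,\sigma}$ the terms $\pm\tfrac1\eta {\bf\bar H}_1(\eta,\xi+\eta\tau)$ from the two contributions cancel, leaving $\tfrac1\eta\big({\bf\bar H}_1(\eta,\xi)-{\bf\bar H}_1(\eta,\xi+\eta\log|\kk^\sigma w_0|)\big)$, exactly the stated formula; here one checks, as after Theorem \ref{thm:SM:DR}, that this difference quotient extends smoothly across $\eta=0$, the location of the resonance. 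The functions $f_i,g_i$ appearing in $B^{\xi,\sigma}$ and $B^{\tau,\sigma}$, together with their $\OO^*(1)$ bounds, are inherited (up to relabeling) directly from Lemma \ref{lemma:SM:DR:1}, and the remainders $\OO^*(\eps^{5/3})$ and $\OO^*(\eps)$ are unchanged.

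To put the splitting potential into the single-integral form \eqref{def:Melnikov:Arnold}, I would return to the definition \eqref{def:Melnikov} of $\Theta^\sigma$: the pair $\vartheta^\sigma(I,\varphi,-\tau)-\vartheta^\sigma(I,\varphi+\mu^\sigma,-\tau)$ vanishes identically since $\mu^\sigma=0$, and in $H_\pm^\sigma$ the shifts $\chi^\sigma(I,\pm\infty)=0$, so $H_\pm^\sigma=H_1(\Ga^\sigma(\eta,\xi,t),t-\tau)-H_1(\eta,\varphi+\eta t,0,0,t-\tau)$ and the two half-line integrals fuse into one integral over $\R$ with $\Ga^\sigma(\eta,\xi,\tau)=(\eta,\xi+\eta\tau,4\arctan e^{\sigma\tau},2\sigma/\cosh\tau)$. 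Finally, for the coordinate change I would note that since the Moser normal form depends only on $(x,y)$, the auxiliary function $f$ in $\xi+\eta\tau=\varphi+f$ loses its dependence on the product $xy$ (equivalently on $w_0$), so the general estimate $f=\OO(w_0+\eps)$ improves to $f=\OO(\eps)$; this is the same cancellation illustrated by the product example discussed after Theorem \ref{thm:SM:SR}. I expect this last point --- and, relatedly, confirming that no hidden $w_0$-dependence survives in $B^{\eta,\sigma}$ or in the $\eps^2$-order terms after the substitution --- to be the only genuine bookkeeping obstacle; everything else is a direct substitution of the Arnold data into Lemma \ref{lemma:SM:DR:1}.
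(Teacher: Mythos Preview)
Your proposal is correct and follows essentially the same approach as the paper: specialize Theorem \ref{thm:SM:DR} (Lemma \ref{lemma:SM:DR:1}) using the Arnold-specific facts $\la\equiv 1$, $\mu^\sigma\equiv 0$, $\nu(\eta)=\eta$, and the $I$-independence of the Moser normal form, then observe that $B^{h,\sigma}$ vanishes and $B^{\eta,\sigma}$ collapses via the cancellation you describe, while the splitting potential reduces to the single integral \eqref{def:Melnikov:Arnold} exactly as in the proof of Theorem \ref{thm:SM:SR:Arnold}. Your write-up is in fact more explicit than the paper's own proof, which simply invokes Theorem \ref{thm:SM:DR}, notes $\la=1$ and $\mu^\sigma=0$, and refers back to the argument for Theorem \ref{thm:SM:SR:Arnold} for the Melnikov formula and the improved estimate on $f$.
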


\begin{proof}
To prove this theorem it is enough to use the results in Theorem 
\ref{thm:SM:DR} and take into account that the particular form of $H_0$ implies 
that $\la=1$ and  $\mu^\sigma=0$. Then, one 
can easily see that $B^{h,\sigma}$ and $B^{\eta,\sigma}$ have the 
form given in this theorem. Moreover, reasoning as in the proof of Theorem 
\ref{thm:SM:SR:Arnold}, one can see that the splitting potential is given by 
formula  \eqref{def:Melnikov:Arnold}.


\end{proof}

\bibliography{references}
\bibliographystyle{alpha}

\end{document}